\newtheorem{theorem}{Theorem}[section]
\newtheorem{lemma}[theorem]{Lemma}
\newtheorem{proposition}[theorem]{Proposition}
\newtheorem{corollary}[theorem]{Corollary}
\newtheorem{definition}[theorem]{Definition}
\newtheorem{Example}[theorem]{Example}
\newtheorem{fact}[theorem]{Fact}
\newtheorem{observation}[theorem]{Observation}
\newtheorem{rmk}[theorem]{\normalfont{\em{Remark}}}
\renewcommand{\leq}{\leqslant}
\renewcommand{\geq}{\geqslant}
\long\def\@savemarbox#1#2{\global\setbox#1\vtop{\hsize\marginparwidth 
%%%%%  \@parboxrestore #2}}
  \@parboxrestore\tiny\raggedright #2}}
\renewcommand*\env@matrix[1][\arraystretch]{%
\edef\arraystretch{#1}%
\hskip -\arraycolsep
\let\@ifnextchar\new@ifnextchar
\array{*\c@MaxMatrixCols c}}
\date{\today}
\title{Cartan projections of fiber products and non quasi-isometric embeddings}
\date{\today}
\subjclass[2020]{22E40, 20H10.}
\author{Konstantinos Tsouvalas}
\begin{document}
\maketitle

\begin{abstract} Let $\Gamma$ be a finitely generated group and $N$ be a normal subgroup of $\Gamma$. The fiber product of $\Gamma$ with respect to $N$ is the subgroup $\Gamma \times_N \Gamma=\big \{(\gamma, \gamma w): \gamma \in \Gamma, w \in N\big \}$ of the direct product $\Gamma \times \Gamma$. For every representation $\rho:\Gamma \times_N \Gamma \rightarrow \mathsf{GL}_d(k)$, where $k$ is a local field, we establish upper bounds for the norm of the Cartan projection of $\rho$  in terms of a fixed word length function on $\Gamma$. As an application, we exhibit examples of finitely generated and finitely presented fiber products $P=\Gamma \times_N \Gamma$, where $\Gamma$ is linear and Gromov hyperbolic, such that $P$ does not admit linear representations which are quasi-isometric embeddings.

\end{abstract}
\section{Introduction} \label{introduction}

Let $\Gamma$ be a finitely generated group and $N$ a normal subgroup of $\Gamma$. The fiber product of $\Gamma$ with respect to $N$ is the subgroup $$\Gamma \times_N \Gamma=\big\{(\gamma, \gamma w): \gamma \in \Gamma, w \in N\big\}$$ of the direct product $\Gamma \times \Gamma$. Fiber products have been used several times for the construction of pathological examples in geometric group theory, see for example \cite{Mihailova, PT, Bass-Lubotzky, Bridson-Grunewald,Bri, BR}. However, very little is known about their linear representations. In this article, we study the Cartan projection of linear representations of fiber products of the form $\Gamma\times_N \Gamma$. More precisely, for a representation \hbox{$\rho:\Gamma \times_{N}\Gamma \rightarrow \mathsf{GL}_d(k)$}, where $k$ is a local field, we establish upper bounds for the norm of the Cartan projection of $\rho$ in terms of a fixed left invariant word metric on $\Gamma$. As an application of our estimates, we exhibit examples of finitely generated and finitely presented fiber products which fail to admit linear representations which are quasi-isometric embeddings.

\par There are several important classes of discrete subgroups of semisimple Lie groups which are quasi-isometrically embedded in the ambient Lie group. These include the class of Anosov hyperbolic groups (see \cite{Labourie, GW}) and more generally of convex cocompact subgroups of the projective linear group $\mathsf{PGL}_d(\mathbb{R})$ introduced in \cite{DGK}, as well as irreducible lattices into higher rank semisimple Lie groups \cite{LMR}. It is a natural question to determine classes of linear finitely generated groups with the property that they (or do not) admit discrete faithful linear representations which are quasi-isometric embeddings. Motivated by this question, we exhibit examples of finitely generated and finitely presented fiber products $P$ such that any representation of $P$ into a general linear group, over a local field, is not a quasi-isometric embedding. Our examples will be constructed as fiber products of the form $P=\Gamma\times_N \Gamma$, where $\Gamma$ is a free group or a $C'(\frac{1}{6})$ small cancellation group of cohomological dimension $2$. To our knowledge, these are the first known examples of subgroups of direct products of Gromov hyperbolic groups with this property.
\par Before we state the main results of this paper let us provide some notation. Let $k$ be a local field. We denote by \hbox{$\mu:\mathsf{GL}_d(k) \rightarrow \mathsf{E}^{+}$} the Cartan projection on $\mathsf{GL}_d(k)$ to a cone $\mathsf{E}^{+}$ in $\mathbb{R}^d$, equipped with the standard Euclidean norm $||\cdot||_{\mathbb{E}}$. We equip a finitely generated group $\Delta$ with a left invariant word metric induced by a finite generating subset of $\Delta$ and denote by $|\cdot|_{\Delta}:\Delta \rightarrow \mathbb{N}$ the associated word length function.  A linear representation $\psi:\Delta\rightarrow \mathsf{GL}_d(k)$ is called a {\em quasi-isometric embedding} if the norm of the Cartan projection of $\psi(\Delta)$ grows uniformly linearly in the word length on $\Delta$. In other words, there exists $C>1$ such that for every $\gamma \in \Delta$, $$C^{-1}\big|\gamma\big|_{\Delta}-C\leq \Big| \Big| \mu\big(\psi(\gamma)\big)\Big|\Big|_{\mathbb{E}} \leq C\big|\gamma\big|_{\Delta}+C.$$  
 
For a group $\mathsf{H}$ and $w_1,w_2 \in \mathsf{H}$, define the commutator $[w_1,w_2]:=w_1^{-1}w_2^{-1}w_1w_2$ and inductively set $[w_1,\ldots, w_{r-1}, w_r]:=\big[[w_1, \ldots, w_{r-1}], w_r\big]$ for every $w_1,\ldots, w_{r-1},w_r \in \mathsf{H}$. The commutator subgroup of $\mathsf{H}$ is $[\mathsf{H},\mathsf{H}]=\big \langle \{[w,w']:w,w'\in \mathsf{H}\}\big \rangle$. Our first result concerns linear representations of a direct product of the form $\mathsf{H}\times \mathsf{H}$. We establish an upper bound for the norm of the Cartan projection of multiple commutators in $\{1\}\times \mathsf{H}$, in terms of the norm of the Cartan projection of the diagonal subgroup $\textup{diag}(\mathsf{H} \times \mathsf{H})=\big\{(w,w):w\in \mathsf{H}\big\}$.

\begin{theorem} \label{mainthm2} Let $k$ be a local field and $\mathsf{H}$ be a group. For every representation \hbox{$\rho:\mathsf{H} \times \mathsf{H} \rightarrow \mathsf{GL}_d(k)$} there exists $C>1$, depending only on $\rho$, with the property: for every $w_1,w_2,\ldots ,w_{r} \in \mathsf{H}\smallsetminus\{1\}$ and $r \geq d+1$ we have that $$\Big| \Big| \mu \big(\rho \big(1, [w_1,w_2,\ldots, w_{r}]\big) \big)\Big|\Big|_{\mathbb{E}} \leq 2^r C\Big(1+\sum_{i=1}^{r}\Big|\Big|\mu\big(\rho(w_i,w_i)\big)\Big|\Big|_{\mathbb{E}}\Big).$$ \end{theorem}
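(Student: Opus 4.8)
The plan is to first replace the iterated commutator in $\{1\}\times\mathsf H$ by one built from diagonal elements, and then to estimate singular values along a composition series. Write $\alpha(w)=\rho(w,1)$, $\beta(w)=\rho(1,w)$, $\delta(w)=\rho(w,w)$, so that $\alpha(\mathsf H)$ and $\beta(\mathsf H)$ commute elementwise and $\delta(w)=\alpha(w)\beta(w)$. The starting point is the identity $[a,c^{-1}b]=[a,b]$, valid whenever $a$ commutes with $c$. Taking $a=\beta([w_1,\dots,w_{k-1}])$, $c=\alpha(w_k)$, $b=\delta(w_k)$ — so that $c^{-1}b=\beta(w_k)$ and $[a,c]=1$ because $\beta(\mathsf H)$ centralises $\alpha(\mathsf H)$ — one obtains inductively
\[\rho\big(1,[w_1,\dots,w_r]\big)=\big[\,\beta(w_1),\,\delta(w_2),\,\delta(w_3),\,\dots,\,\delta(w_r)\,\big],\]
so that only the innermost entry fails to be a diagonal element (the hypothesis $w_i\neq 1$ merely discards the trivial case, where the left-hand side vanishes). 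Using sub-additivity of $\|\mu(\cdot)\|_{\mathbb E}$ together with $\|\mu(g^{-1})\|_{\mathbb E}=\|\mu(g)\|_{\mathbb E}$, the outer $r-d-1$ brackets $[\,\cdot\,,\delta(w_k)]$ are stripped off at the cost of a factor $2^{\,r-d-1}$ and of $2^{r}\sum_{k>d+1}\|\mu(\delta(w_k))\|_{\mathbb E}$. Hence it suffices to treat the case $r=d+1$, namely to prove $\|\mu(g_0)\|_{\mathbb E}\leq C_0\big(1+\sum_{i=1}^{d+1}\|\mu(\delta(w_i))\|_{\mathbb E}\big)$ for $g_0:=[\beta(w_1),\delta(w_2),\dots,\delta(w_{d+1})]$, with $C_0$ depending only on $\rho$.

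For this I would pass to the \emph{spread} $s(g):=\log\|g\|+\log\|g^{-1}\|$ (operator norm). Since $g_0$ is a commutator it has determinant $1$, and for determinant-one matrices $\|\mu(g)\|_{\mathbb E}$ is comparable to $s(g)$ up to a constant depending only on $d$; moreover $s$ is sub-additive, inversion-invariant, non-increasing on the diagonal blocks of any block-upper-triangular matrix with invertible diagonal blocks, and $s(\delta(w))\leq\sqrt2\,\|\mu(\delta(w))\|_{\mathbb E}$. Fix a composition series $0=V_0\subset\cdots\subset V_\ell=k^d$, $\ell\leq d$, for the algebra generated by $\rho(\mathsf H\times\mathsf H)$; it is stabilised by $\alpha,\beta,\delta$. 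On a factor $W_j=V_j/V_{j-1}$ the commuting images of $\alpha$ and $\beta$ generate an algebra acting irreducibly on $W_j$; since the algebra of $\alpha$-endomorphisms of $W_j$ preserves the $\alpha$-socle of $W_j$ and contains the image of $\beta$, this forces $W_j$ to be semisimple under $\alpha$, so $W_j\simeq U_j\otimes M_j$ with $\alpha(w)$ acting as $\rho_j(w)\otimes\mathrm{id}$ and $\beta(w)$ as $\mathrm{id}\otimes\theta_j(w)$. Consequently the $W_j$-block of $\delta(w)$ is $\rho_j(w)\otimes\theta_j(w)$, while that of $g_0$ is $\mathrm{id}\otimes[\theta_j(w_1),\dots,\theta_j(w_{d+1})]$, of determinant $1$. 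Tensoring multiplies all singular values, so $s(\theta_j(w_i))\leq s\big(\rho_j(w_i)\otimes\theta_j(w_i)\big)\leq s(\delta(w_i))$, whence by sub-additivity and inversion-invariance of $s$ the spread of each diagonal block of $g_0$ is $\leq 2^{d+1}\sum_i s(\delta(w_i))$. Summing squares over $j$ bounds the \emph{block-diagonal part} of $g_0$ (in a basis adapted to the flag) by $2^{O(d)}\sum_i\|\mu(\delta(w_i))\|_{\mathbb E}$.

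What remains — and this is the crux — is the flag-unipotent discrepancy: the factor $u$ in the unipotent radical $N$ of the stabiliser of the composition flag such that $g_0$ equals its block-diagonal part times $u$. A priori $\|u\|$ is not controlled, because the individual $\beta(w_i)$ are not. Here $N$ is nilpotent of class $\leq\ell-1\leq d-1$ and carries a filtration $N=N_1\supseteq\cdots\supseteq N_\ell=\{1\}$ with $[N_i,N_j]\subseteq N_{i+j}$ that is normalised by the Levi factor; the mechanism I expect is that, once the Levi parts of the $\delta(w_k)$ (of controlled spread) are absorbed, the $d$ nested brackets $[\,\cdot\,,\delta(w_k)]$ assemble $u$ out of at least $d$ commutators internal to $N$, so that $u$ is either trivial or expressed through quantities already estimated. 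This is precisely the point where $r\geq d+1$ — at least $d$ commutator layers, $d$ being the length of the flag — is indispensable. Making this bookkeeping rigorous (controlling the interaction between the Levi and unipotent directions of the flag stabiliser along the iterated commutator) is the main obstacle; the semisimple/tensor part of the argument is by comparison routine. Combining the three steps yields the theorem with $C$ of the form $2^{O(d)}$ times a constant depending only on $\rho$.
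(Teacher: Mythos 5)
Your opening reduction is correct and quite clean: the identity $[a,c^{-1}b]=[a,b]$ for $a,c$ commuting does give $\rho(1,[w_1,\dots,w_r])=[\beta(w_1),\delta(w_2),\dots,\delta(w_r)]$, a rewriting the paper does not state explicitly but which tidies up the bookkeeping. Your treatment of the block-diagonal part is also sound and, in fact, a genuinely different (and more elementary) route than the paper's: the paper proves its Lemma~\ref{mainthm-semisimple0} for semisimple $\rho$ via Abels--Margulis--Soifer/Benoist (Theorem~\ref{finitesubset}) and an analysis of $1$-proximal attracting lines (Lemma~\ref{restriction}), getting a bound after a \emph{single} commutator; your tensor decomposition $W_j\simeq U_j\otimes M_j$ with $\alpha\otimes\mathrm{id}$ and $\mathrm{id}\otimes\beta$ sidesteps all of that because the $\rho_j$-factor cancels in the commutator, at the cost of requiring the iterated commutator (which you have). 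That part of your argument buys simplicity where the paper's stronger per-$[\mathsf H,\mathsf H]$ estimate is not needed.

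The gap is exactly where you flag it, and your stated heuristic for closing it would not work. You hope that the $d$ nested brackets $[\,\cdot\,,\delta(w_k)]$ push the unipotent discrepancy $u$ into successively deeper terms of the lower central series of $N$ until it dies, ``because $N$ has class $\leq\ell-1$.'' But the $\delta(w_k)$ do not lie in $N$, and a commutator $[X,\delta(w_k)]$ with $\delta(w_k)\notin N$ does \emph{not} increase the $N$-depth of the unipotent part of $X$: expanding in the Levi decomposition you get products of Levi-conjugates of the (uncontrolled) unipotent part of $X$ that are not nested commutators in $N$. Nilpotency of $N$ alone is therefore not the mechanism. What actually makes the argument go through in the paper is Lemma~\ref{subalgebra}, a Goursat-type dichotomy for a pair of diagonal blocks $(\rho_i,\rho_j)$: either $(\rho_i\times\rho_j)(\mathsf H\times\mathsf H)$ spans $\mathfrak{gl}_{d_i}\times\mathfrak{gl}_{d_j}$, in which case one isolates the $(i,j)$-block by a fixed linear combination of group translates and bounds it directly (Cases 1a and 2a of Lemma~\ref{control}); or $\rho_i$ and $\rho_j$ are conjugate, in which case the off-diagonal entry decomposes as $u(1,w)=\lambda(w)\rho_i(1,w)+\mathcal D_w$ with $\mathcal D_w$ bounded and $\lambda(w)\in k$ \emph{unbounded}, and the point of taking one more bracket with $\delta(w_{k+1})$ is the exact cancellation of the $\lambda(w)$-terms in the explicit product formula (the underlined terms in equation~\eqref{eql}). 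This is an algebraic cancellation tied to the conjugacy of the two blocks, not a nilpotency-depth count; you get one controlled superdiagonal per extra bracket, which is where the bound $r\geq d+1$ really comes from (the number of irreducible blocks is at most $d$). Without isolating the scalar part of each unipotent entry and verifying its cancellation, the ``bookkeeping'' you defer is not routine; it is the content of Lemma~\ref{control}.
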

\medskip

As a corollary of Theorem \ref{mainthm2} we obtain the following bound for the Cartan projection of  linear representations of (not necessarily finitely generated) fiber products.

\begin{corollary} \label{mainthm2-fiberproducts} Let $\Gamma$ be a finitely generated group, $N<\Gamma$ a normal subgroup and \hbox{$|\cdot|_{\Gamma}:\Gamma \rightarrow \mathbb{N}$} a word length function on $\Gamma$. Let $k$ be a local field. For every representation $\rho:\Gamma \times _N \Gamma \rightarrow \mathsf{GL}_d(k)$ there exist $C,c>1$, depending only on $\rho$, with the property: for every $w_1,w_2,\ldots ,w_{r} \in N$, $r \geq d+1$, and $\gamma \in \Gamma$ we have that $$\Big| \Big| \mu \big(\rho \big(\gamma, \gamma [w_1,w_2,\ldots, w_{r}]\big) \big)\Big|\Big|_{\mathbb{E}} \leq 2^r C\Big(\sum_{i=1}^{r}\big|w_i\big|_{\Gamma}\Big)+c\big|\gamma \big|_{\Gamma}.$$ \end{corollary}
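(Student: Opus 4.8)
The plan is to deduce Corollary \ref{mainthm2-fiberproducts} from Theorem \ref{mainthm2} by means of the factorization
$$\big(\gamma,\ \gamma[w_1,w_2,\ldots,w_r]\big)=(\gamma,\gamma)\cdot\big(1,[w_1,w_2,\ldots,w_r]\big)$$
inside $\Gamma\times_N\Gamma$, combined with the sub-additivity of the norm of the Cartan projection. First I would check that every element appearing lies in the domain of $\rho$: the diagonal element $(\gamma,\gamma)$ lies in $\Gamma\times_N\Gamma$ (take $w=1\in N$); since $w_1,\ldots,w_r\in N$ we have $[w_1,\ldots,w_r]\in[N,N]\subseteq N$, so $\big(1,[w_1,\ldots,w_r]\big)\in\{1\}\times N\subseteq\Gamma\times_N\Gamma$; and their product is the element on the left. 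I would also observe that $N\times N\subseteq\Gamma\times_N\Gamma$, because $(a,b)=\big(a,\ a(a^{-1}b)\big)$ with $a\in N\subseteq\Gamma$ and $a^{-1}b\in N$ for all $a,b\in N$. Thus $\rho$ restricts to a representation $N\times N\to\mathsf{GL}_d(k)$, and this is the representation to which Theorem \ref{mainthm2} will be applied — rather than to $\Gamma\times\Gamma$, on which $\rho$ need not be defined; this is the one point of the reduction requiring care.

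Next I would invoke the well-known facts that $g\mapsto\|\mu(g)\|_{\mathbb{E}}$ is sub-additive, $\|\mu(g_1g_2)\|_{\mathbb{E}}\le\|\mu(g_1)\|_{\mathbb{E}}+\|\mu(g_2)\|_{\mathbb{E}}$, symmetric, $\|\mu(g^{-1})\|_{\mathbb{E}}=\|\mu(g)\|_{\mathbb{E}}$, and that $\mu(\mathrm{Id})=0$. Since $\gamma\mapsto(\gamma,\gamma)$ is a homomorphism, the composite $\gamma\mapsto\|\mu(\rho(\gamma,\gamma))\|_{\mathbb{E}}$ is a sub-additive, symmetric function on $\Gamma$ vanishing at $1$; writing $\gamma$ as a word of length $|\gamma|_{\Gamma}$ in a fixed finite generating set $S$ of $\Gamma$ yields $\|\mu(\rho(\gamma,\gamma))\|_{\mathbb{E}}\le c_0\,|\gamma|_{\Gamma}$ with $c_0:=\max_{s\in S}\|\mu(\rho(s,s))\|_{\mathbb{E}}$ depending only on $\rho$. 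Applying sub-additivity to the factorization above then gives
$$\big\|\mu\big(\rho(\gamma,\gamma[w_1,\ldots,w_r])\big)\big\|_{\mathbb{E}}\ \le\ c_0\,|\gamma|_{\Gamma}+\big\|\mu\big(\rho(1,[w_1,\ldots,w_r])\big)\big\|_{\mathbb{E}}.$$

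It remains to control the second term. If some $w_i$ equals $1$, then $[w_1,\ldots,w_r]=1$ (an iterated commutator one of whose entries is trivial is itself trivial), so the term vanishes and the asserted inequality holds since its right-hand side is nonnegative. Otherwise $w_1,\ldots,w_r\in N\smallsetminus\{1\}$, and since $r\ge d+1$ we may apply Theorem \ref{mainthm2} to $\rho|_{N\times N}$ to obtain $C_1>1$, depending only on $\rho$, with
$$\big\|\mu\big(\rho(1,[w_1,\ldots,w_r])\big)\big\|_{\mathbb{E}}\le 2^rC_1\Big(1+\sum_{i=1}^r\|\mu(\rho(w_i,w_i))\|_{\mathbb{E}}\Big)\le 2^rC_1\Big(1+c_0\sum_{i=1}^r|w_i|_{\Gamma}\Big),$$
using the diagonal estimate $\|\mu(\rho(w_i,w_i))\|_{\mathbb{E}}\le c_0|w_i|_{\Gamma}$ for each $w_i\in\Gamma$. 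Since each $w_i\ne1$ we have $|w_i|_{\Gamma}\ge1$, hence $1\le\sum_{i=1}^r|w_i|_{\Gamma}$ and the bracket is at most $(1+c_0)\sum_{i=1}^r|w_i|_{\Gamma}$. Taking $C:=C_1(1+c_0)$ and $c:=c_0$ (and enlarging both past $1$, which only weakens the bound) yields $\big\|\mu(\rho(\gamma,\gamma[w_1,\ldots,w_r]))\big\|_{\mathbb{E}}\le 2^rC\sum_{i=1}^r|w_i|_{\Gamma}+c|\gamma|_{\Gamma}$ in all cases. The only substantive ingredient is Theorem \ref{mainthm2}; everything else is bookkeeping with sub-additivity and the observation that $N\times N\subseteq\Gamma\times_N\Gamma$, so I do not expect a genuine obstacle beyond organizing these steps correctly.
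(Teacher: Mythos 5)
Your proof is correct and follows essentially the same route as the paper: factor $(\gamma,\gamma[w_1,\ldots,w_r])=(\gamma,\gamma)\cdot(1,[w_1,\ldots,w_r])$, use sub-additivity of $\|\mu(\cdot)\|_{\mathbb{E}}$, bound the diagonal term linearly in $|\gamma|_\Gamma$, and invoke Theorem \ref{mainthm2} for the second factor. You are slightly more careful than the printed proof at two minor points that do not change the substance — making explicit that $N\times N\subseteq\Gamma\times_N\Gamma$ so that Theorem \ref{mainthm2} is legitimately applied to $\rho|_{N\times N}$ (since $\rho$ is not defined on all of $\Gamma\times\Gamma$), and treating the degenerate case where some $w_i=1$ separately.
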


Let $F_m$ be the free group on $m\geq 2$ generators. As an application of Theorem \ref{mainthm2} and Corollary \ref{mainthm2-fiberproducts}, we exhibit examples of finitely generated subgroups of the direct product $F_m \times F_m$ which fail to admit linear representations which are quasi-isometric embeddings. 

\begin{theorem} \label{nonqie1} Let $F_m$, $m \geq 2$, be the free group on $\{x_1,\ldots ,x_m\}$ and $\mathcal{A}$ a finite subset of $[F_m,F_m]$ which contains the commutator $[x_{j},x_{p}]=x_{j}^{-1}x_p^{-1}x_{j} x_p$ for some $1\leq p<j \leq m$. Let $$\Delta_{\mathcal{A}}=F_m\times_{\langle \langle \mathcal{A}\rangle \rangle}F_m=\Big \langle \big \{ (x_i,x_i), (1,w): w \in \mathcal{A},\ 1 \leq i \leq m  \big \} \Big \rangle$$ be the fiber product of $F_m$ with respect to the normal subgroup \hbox{$\langle \langle \mathcal{A} \rangle \rangle=\big\langle \{ghg^{-1}:h\in \mathcal{A},g \in F_m \}\big \rangle$} and fix a word length function $|\cdot|_{\mathcal{A}}:\Delta_{\mathcal{A}} \rightarrow \mathbb{N}$. For every $d \in \mathbb{N}$, there exists an infinite sequence $(w_n)_{n \in \mathbb{N}}$ of elements in $\Delta_{\mathcal{A}}$ with the property: for every representation \hbox{$\rho:\Delta_{\mathcal{A}} \rightarrow \mathsf{GL}_d(k)$}, where $k$ is a local field, there exists $C_{\rho}>0$ such that \begin{equation} \label{intro-ineq1} \Big|\Big| \mu \big(\rho(w_n)\big)\Big|\Big|_{\mathbb{E}} \leq C_{\rho}\big|w_n\big|_{\mathcal{A}}^{1-\frac{1}{2d+3}}\end{equation} for every $n \in \mathbb{N}$. In particular, the finitely generated group $\Delta_{\mathcal{A}}$ does not admit linear representation, over a local field, which is a quasi-isometric embedding. \end{theorem}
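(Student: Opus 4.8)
The plan is to exhibit the sequence $(w_n)$ as multiple commutators in $\{1\}\times F_m$ sitting inside $\Delta_{\mathcal{A}}$, with carefully chosen nesting depth, so that Theorem \ref{mainthm2} (applied via Corollary \ref{mainthm2-fiberproducts}, or directly) gives the sublinear upper bound on the Cartan norm, while an independent lower bound on the word length $|w_n|_{\mathcal{A}}$ shows the ratio cannot be bounded below.

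\medskip

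First I would fix the generator $[x_j,x_p]\in\mathcal{A}$, so that $a:=(1,[x_j,x_p])$ is a generator of $\Delta_{\mathcal{A}}$, and set $g_i:=(x_i,x_i)$ for $1\le i\le m$. For a parameter $r$ (to be taken $=2d+2$ or thereabouts) consider nested commutators of the form
$$
w_n \;=\; \big[\,\underbrace{g_{i_1},\dots,g_{i_{r}}}_{r\text{ of the }g\text{'s}},\,a^{(n)}\,\big]\quad\text{or more simply}\quad
w_n=\big(1,\ [\,\underbrace{u,\dots,u}_{},\,v_n\,]\big),
$$
where I arrange, using that $[x_j,x_p]\in N=\langle\langle\mathcal A\rangle\rangle$ and hence $(1,[x_j,x_p])\in\Delta_{\mathcal A}$, to produce an element of the second-coordinate copy $\{1\}\times F_m$ whose $F_m$-component is an $(r{+}1)$-fold iterated commutator that is genuinely nontrivial in $F_m$ (nontriviality is where I must be a little careful: pick the $x_{i_k}$ and the "innermost" letter from $\{x_1,\dots,x_m\}$ so that the resulting word in the free group is reduced and nonempty — e.g. alternate two distinct generators so no cancellation collapses it, a standard fact about iterated commutators of distinct free generators). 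The point of landing in $\{1\}\times F_m$ is that Theorem \ref{mainthm2} applies with $\mathsf H=F_m$: since $w_n=\rho(1,[w'_1,\dots,w'_{r+1}])$ with each $w'_i\in F_m\smallsetminus\{1\}$ a single generator (hence $\|\mu(\rho(w'_i,w'_i))\|_{\mathbb E}\le$ a constant depending only on $\rho$ and the generating set), we get
$$
\big\|\mu(\rho(w_n))\big\|_{\mathbb E}\;\le\; 2^{\,r+1}C\Big(1+\textstyle\sum_{i=1}^{r+1}\|\mu(\rho(w'_i,w'_i))\|_{\mathbb E}\Big)\;\le\; C'_\rho,
$$
a bound \emph{independent of $n$} once $r$ is fixed — but this would only give a \emph{bounded} (not merely sublinear) sequence, which is even stronger than \eqref{intro-ineq1}. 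To make the statement nontrivial one wants $|w_n|_{\mathcal A}\to\infty$; so instead I take the innermost entry to be $a^{n}=(1,[x_j,x_p])^n=(1,[x_j,x_p]^n)$, giving $w_n=(1,[\,x_{i_1},\dots,x_{i_r},\,[x_j,x_p]^n\,])$. Then Corollary \ref{mainthm2-fiberproducts} (with $\gamma=1$) yields
$$
\big\|\mu(\rho(w_n))\big\|_{\mathbb E}\;\le\;2^{\,r+1}C\Big(r + \big|[x_j,x_p]^n\big|_{F_m}\Big)\;\le\;2^{\,r+1}C\,(r+4n)\;\le\;C''_\rho\, n,
$$
using $|[x_j,x_p]^n|_{F_m}\le 4n$ in the fixed word metric on $F_m$.

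\medskip

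Second — and this is the crux — I must bound $|w_n|_{\mathcal A}$ from below by a power of $n$, say $|w_n|_{\mathcal A}\gtrsim n^{1+\frac{1}{2d+2}}$ (or more precisely choose $r$ so the exponent matches $\tfrac{2d+3}{2d+2}$ appearing in \eqref{intro-ineq1} after rearrangement $1-\tfrac1{2d+3}=\tfrac{2d+2}{2d+3}$, i.e. I actually want $\|\mu(\rho(w_n))\|_{\mathbb E}\lesssim |w_n|_{\mathcal A}^{(2d+2)/(2d+3)}$, so combined with the $O(n)$ upper bound I need $n\lesssim |w_n|_{\mathcal A}^{(2d+2)/(2d+3)}$, i.e. $|w_n|_{\mathcal A}\gtrsim n^{(2d+3)/(2d+2)}=n^{1+1/(2d+2)}$). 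This is the main obstacle, and it is a distortion estimate: I need to show that the $r$-fold iterated commutator $w_n$ of a fixed set of generators with the innermost letter replaced by its $n$-th power has word length at least of order $n^{1+1/(2d+2)}$ in $\Delta_{\mathcal A}$. I would get this from the known polynomial \emph{distortion of the fiber-product kernel}: the normal closure $\langle\langle\mathcal A\rangle\rangle$ sits in $\{1\}\times F_m\cap\Delta_{\mathcal A}$ as a (finitely generated or not) subgroup whose elements of $\Delta_{\mathcal A}$-length $\le L$ form, in the free group, words of length at most polynomial in $L$; more usefully, the $r$-fold commutator construction has precisely the effect of dividing the length, so that expressing $[x_{i_1},\dots,x_{i_r},\,z]$ requires roughly $|z|^{1/(r+1)}$ generators when $z=[x_j,x_p]^n$ lives in the innermost slot — this is the standard "iterated-commutator distortion" phenomenon (as in Mihailova-type and Bridson-type examples), and with $r$ chosen as $2d+2$ one gets the exponent $1/(2d+3)$. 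Concretely, I would cite/adapt the area–distortion estimates for such fiber products: the key inequality is a lower bound $|w_n|_{\mathcal A}\ge c\, n^{(r+1)/r}$ coming from the fact that any word of length $L$ representing $w_n$ can be rewritten, by $L$ applications of the defining relations and commutator identities, into a nested commutator whose innermost exponent is $O(L^{r/(r+1)})$, forcing $L\ge c\,n^{(r+1)/r}$. Taking $r=2d+2$ gives $|w_n|_{\mathcal A}\ge c\,n^{(2d+3)/(2d+2)}$, hence $n\le C|w_n|_{\mathcal A}^{(2d+2)/(2d+3)}$, and feeding this into the displayed upper bound $\|\mu(\rho(w_n))\|_{\mathbb E}\le C''_\rho n$ produces exactly \eqref{intro-ineq1}. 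Finally, since $|w_n|_{\mathcal A}\to\infty$ while $\|\mu(\rho(w_n))\|_{\mathbb E}/|w_n|_{\mathcal A}\to 0$, no representation $\rho$ can satisfy the lower bound $C^{-1}|\gamma|_{\mathcal A}-C\le\|\mu(\rho(\gamma))\|_{\mathbb E}$, so $\Delta_{\mathcal A}$ admits no quasi-isometric embedding into any $\mathsf{GL}_d(k)$.
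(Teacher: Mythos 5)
Your overall template is the right one (iterated commutators, upper bound from Theorem~\ref{mainthm2}/Corollary~\ref{mainthm2-fiberproducts}, lower bound from distortion), but the specific sequence you propose does not work, and the distortion lower bound — which you correctly identify as the crux — is not proved but only asserted.

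First, your element $w_n=\big(1,[x_{i_1},\dots,x_{i_r},[x_j,x_p]^n]\big)$ has word length $O(n)$ in $\Delta_{\mathcal A}$, not $\Omega(n^{(r+1)/r})$. Writing $z:=[x_{i_1},\dots,x_{i_r}]\in F_m$ (a fixed word of bounded length), we have $[z,[x_j,x_p]^n]=z^{-1}[x_j,x_p]^{-n}z[x_j,x_p]^n$, and so
\[
w_n=(z^{-1},z^{-1})\,(1,[x_j,x_p])^{-n}\,(z,z)\,(1,[x_j,x_p])^{n},
\]
a word of length $2|z|+2n$ in the standard generators of $\Delta_{\mathcal A}$. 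Since your upper bound on $\|\mu(\rho(w_n))\|_{\mathbb E}$ is also of order $n$, the ratio does not tend to zero and nothing is gained. The paper's sequence is $w(d,n)=\big(1,V_d(x_1^n,\dots,x_m^n)\big)$ with $V_d=\big[[[x_j^n,x_p^n],x_p^n],[x_j^n,x_p^n],\dots,[x_j^n,x_p^n]\big]$, i.e.\ the power $n$ is distributed into every slot. Each slot is an element of $N$ of size $O(n)$, so the Cartan bound is still $O(n)$, but now $V_d(x_1,\dots,x_m)$ is a \emph{basic commutator} of weight $\beta=2d+3$, and $V_d(x_1^n,\dots,x_m^n)\equiv V_d(x_1,\dots,x_m)^{n^\beta}$ modulo $\gamma_{\beta+1}(F_m)$ (Fact~\ref{bci}), which is what drives the superlinear lower bound. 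Your choice also fails the hypotheses of Corollary~\ref{mainthm2-fiberproducts}: the $w_i$ are required to lie in $N$, and $x_{i_1},\dots,x_{i_r}\notin N$.

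Second, even with the correct sequence, the lower bound $|w(d,n)|_{\mathcal A}\gtrsim n^{1+\frac{1}{2d+2}}$ is not a ``standard fact'' that can be cited away. It is established in the paper via Lemma~\ref{estimate1}, whose proof uses the commutator-collection identities (\ref{commident}), Magnus's theorem that basic commutators freely generate $\gamma_p(F_m)/\gamma_{p+1}(F_m)$, Hidber's rewriting lemma (Lemma~\ref{rewrite}), and a delicate inductive count of how many basic commutators of each weight appear after collecting. This yields $R_M\geq c\,|w\gamma_{p+1}(F_m)|_{A_p^m}^{1/(p-1)}$, which combined with the $n^\beta$ growth of the abelianized image gives the exponent $1+\frac{1}{\beta-1}$. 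Your sketch (``any word of length $L$ representing $w_n$ can be rewritten ... into a nested commutator whose innermost exponent is $O(L^{r/(r+1)})$'') gestures at this phenomenon but supplies none of the argument; in particular it does not explain where the algebraic invariant that survives rewriting comes from, which is precisely the role played by the Magnus basis for $\gamma_\beta/\gamma_{\beta+1}$.
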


Now we briefly explain how Theorem \ref{mainthm2} and Corollary \ref{mainthm2-fiberproducts} are used in the proof of Theorem \ref{nonqie1}. Theorem \ref{mainthm2} and Corollary \ref{mainthm2-fiberproducts} imply that for any representation $\rho: F_m\times_{\langle \langle \mathcal{A} \rangle \rangle} F_m \rightarrow \mathsf{GL}_d(k)$, $d\in \mathbb{N}$, and any sequence of the form $(1,y_n)\in F_m\times_{\langle \langle \mathcal{A} \rangle \rangle} F_m$, where $y_n\in [F_m,F_m]$ is a commutator of at least $d+1$ elements in $\langle \langle \mathcal{A}\rangle \rangle$, the norm of the Cartan projection of the sequence $\big(\rho(1,y_n)\big)_{n\in \mathbb{N}}$ grows at most linearly in $n$. However, for particular choices of $(y_n)_{n\in \mathbb{N}}$, the word length of the sequence $((1,y_n))_{n\in \mathbb{N}}$ grows faster than linearly in $n$, showing that $\rho$ fails to be a quasi-isometric embedding. One possible choice of such a sequence is $$w_n:=\Big(1,\Big[\big[ [x_j^n,x_p^n],x_p^n\big],\underbrace{[x_j^n,x_p^n], \ldots, [x_j^n,x_p^n]}_{d-\textup{times}}\Big]\Big).$$ We refer to Theorem \ref{nonqie1gen} which is a generalization of the previous theorem for more details.

Thanks to the Rips construction \cite{Rips} and its generalizations (e.g. see \cite{Rips-Wise}), given any finitely presented group $Q$ there exist explicit examples of a Gromov hyperbolic group $\Gamma$, satisfying the $C'(\frac{1}{6})$ small cancellation condition, and a normal finitely generated subgroup $N$ of $\Gamma$ such that $\Gamma/N=Q$. By using Corollary \ref{mainthm2-fiberproducts} and following the point of view of the Bridson--Grunewald construction in \cite{Bridson-Grunewald}, we obtain the following theorem.

\begin{theorem} \label{nonqie2} Let $F_m$, $m \geq 2$, be the free group on $\{x_1,\ldots ,x_m\}$ and $\mathcal{B}$ a finite subset of $[F_m,F_m]$ which contains the commutator $[x_{j},x_{p}]=x_{j}^{-1}x_p^{-1}x_{j} x_p$ for some \hbox{$1\leq p<j \leq m$.} Let $Q_{\mathcal{B}}=\big \langle x_1,\ldots,x_m\ \big| \ \mathcal{B} \big\rangle$ and $\Gamma_{\mathcal{B}}$ be a $C'(\frac{1}{6})$ small cancellation group provided by the Rips construction\footnote{We refer here to Rips' construction in \cite{Rips} which we review in \S\ref{Rips-construction}.} such that $$1 \rightarrow N_{\mathcal{B}} \rightarrow \Gamma_{\mathcal{B}} \rightarrow Q_{\mathcal{B}} \rightarrow 1$$ is a short exact sequence and $N_{\mathcal{B}}$ is finitely generated. Let $P_{\mathcal{B}}= \Gamma_{\mathcal{B}} \times_{N_{\mathcal{B}}} \Gamma_{\mathcal{B}}$ and fix a word length function $|\cdot|_{\mathcal{B}}:P_{\mathcal{B}} \rightarrow \mathbb{N}$. For every $d \in \mathbb{N}$, there exists an infinite sequence $(\delta_{n})_{n \in \mathbb{N}}$ of elements in $P_{\mathcal{B}}$ with the property: for every representation $\rho:P_{\mathcal{B}} \rightarrow \mathsf{GL}_d(k)$, where $k$ is a local field, there exists $C_{\rho}>0$ such that \begin{equation} \label{intro-ineq2} \Big|\Big| \mu \big(\rho(\delta_n)\big)\Big|\Big|_{\mathbb{E}} \leq C_{\rho}\big|\delta_n\big|_{\mathcal{B}}^{1-\frac{1}{2d+3}}\end{equation} for every $n \in \mathbb{N}$. In particular, the finitely generated group $P_{\mathcal{B}}$ does not linear representation, over a local field $k$, which is a quasi-isometric embedding.\end{theorem}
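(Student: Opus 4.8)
The plan is to realise the sequence $(\delta_n)$ as the image, under a natural homomorphism $\Delta_{\mathcal B}\to P_{\mathcal B}$, of the explicit sequence already analysed for the free-group fiber product in Theorem~\ref{nonqie1gen}, and then to run the same two-sided estimate as there: the upper bound on the Cartan projection comes from Corollary~\ref{mainthm2-fiberproducts}, and the lower bound on word length is transported from $\Delta_{\mathcal B}$ using the small-cancellation geometry of the Rips group. Concretely, fix the $C'(\tfrac16)$ Rips presentation $\Gamma_{\mathcal B}=\langle x_1,\dots,x_m,a_1,a_2\mid\mathcal R\rangle$, in which $N_{\mathcal B}=\langle a_1,a_2\rangle=\langle\langle a_1,a_2\rangle\rangle$ and $\Gamma_{\mathcal B}/N_{\mathcal B}=Q_{\mathcal B}$. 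Since each relator of $\mathcal R$ shares with a word in the letters $x_i$ only pieces much shorter than half a relator, Dehn's algorithm shows that $F:=\langle x_1,\dots,x_m\rangle\le\Gamma_{\mathcal B}$ is free of rank $m$ and isometrically embedded, whence $[x_j,x_p]\ne 1$ in $\Gamma_{\mathcal B}$; on the other hand $[x_j,x_p]\in\mathcal B$, so $[x_j,x_p]=1$ in $Q_{\mathcal B}$ and hence $[x_j,x_p]\in N_{\mathcal B}$, so that $[x_j^n,x_p^n]\in N_{\mathcal B}$ for every $n\ge 1$. Because $F\hookrightarrow\Gamma_{\mathcal B}$ carries $\langle\langle\mathcal B\rangle\rangle$ into $N_{\mathcal B}$, it induces a homomorphism $\phi\colon\Delta_{\mathcal B}=F_m\times_{\langle\langle\mathcal B\rangle\rangle}F_m\to P_{\mathcal B}$ with $\phi(x_i,x_i)=(x_i,x_i)$ and $\phi(1,w)=(1,w)$, and I take $\delta_n:=\phi(w_n)$ for the sequence $(w_n)$ of Theorem~\ref{nonqie1gen}; that is,
\[
\delta_n=\bigl(1,u_n\bigr),\qquad u_n:=\Bigl[\,[\,[x_j^n,x_p^n],x_p^n\,],\ \underbrace{[x_j^n,x_p^n],\dots,[x_j^n,x_p^n]}_{d}\Bigr]\ \in\ N_{\mathcal B}.
\]

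For the upper bound, write $u_n=[Y_n,Z_n,\dots,Z_n]$, a $(d+1)$-fold commutator with $d$ trailing entries $Z_n:=[x_j^n,x_p^n]$ and first entry $Y_n:=[Z_n,x_p^n]$; both belong to $N_{\mathcal B}$ (for $Y_n$, because $N_{\mathcal B}\lhd\Gamma_{\mathcal B}$) and both have $\Gamma_{\mathcal B}$-word length $O(n)$, being short words in the $x_i$. Applying Corollary~\ref{mainthm2-fiberproducts} with $r=d+1$ and trivial first coordinate then yields, for every local field $k$ and every representation $\rho\colon P_{\mathcal B}\to\mathsf{GL}_d(k)$, a constant $C_\rho>0$ with
\[
\Bigl\|\mu\bigl(\rho(\delta_n)\bigr)\Bigr\|_{\mathbb E}\ \le\ 2^{d+1}C\bigl(|Y_n|_{\Gamma_{\mathcal B}}+d\,|Z_n|_{\Gamma_{\mathcal B}}\bigr)\ \le\ C_\rho\,n .
\]

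The lower bound is the heart of the matter. The goal is $|\delta_n|_{\mathcal B}\ge c\,|w_n|_{\Delta_{\mathcal B}}$, which combined with the word-length estimate $|w_n|_{\Delta_{\mathcal B}}\ge c'\,n^{(2d+3)/(2d+2)}$ established inside the proof of Theorem~\ref{nonqie1gen} gives $|\delta_n|_{\mathcal B}\ge c''\,n^{(2d+3)/(2d+2)}$. Expanding a geodesic word for $\delta_n$ over a generating set $\{(x_i,x_i),(a_\ell,a_\ell),(1,a_\ell)\}$ of $P_{\mathcal B}$ and projecting onto the two factors of $\Gamma_{\mathcal B}\times\Gamma_{\mathcal B}$ shows, in the standard way, that $|\delta_n|_{\mathcal B}=L$ forces an expression of $u_n$ as a product of at most $L$ conjugates of $a_1^{\pm1},a_2^{\pm1}$ by elements of $\Gamma_{\mathcal B}$ of length at most $L$; thus $|\delta_n|_{\mathcal B}$ bounds below the cost of normally generating $u_n$ from $a_1,a_2$ inside $\Gamma_{\mathcal B}$. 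Since $u_n$ is a reduced word in the $x_i$ lying in $N_{\mathcal B}=\ker(\Gamma_{\mathcal B}\to Q_{\mathcal B})$, and since the relators of $\mathcal R$ meet $x$-words only along bounded pieces, any such expression converts, at a bounded multiplicative cost, into an expression of $u_n$ as a product of at most $CL$ conjugates --- by $F_m$-elements of length at most $CL$ --- of the relators $\mathcal B$ of $Q_{\mathcal B}$, which is precisely the data controlling $|w_n|_{\Delta_{\mathcal B}}$. This is the step that follows the Bridson--Grunewald viewpoint \cite{Bridson-Grunewald}: normally generating a purely-$x$ word inside the Rips group $\Gamma_{\mathcal B}$ is no cheaper than doing so, over $\mathcal B$, inside $F_m$. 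I expect this to be the main obstacle, requiring a van Kampen diagram analysis over the $C'(\tfrac16)$ presentation of $\Gamma_{\mathcal B}$ that tracks the creation and cancellation of $a$-syllables; it is here that the precise form of the Rips construction enters (for instance, choosing the conjugation relators so that $a_1,a_2$ die in $\Gamma_{\mathcal B}^{\mathrm{ab}}$ also makes available a direct argument through the class-$(2d+3)$ nilpotent quotient of $\Gamma_{\mathcal B}$, in which $u_n$ equals $c^{\,n^{2d+3}}$ for a weight-$(2d+3)$ iterated commutator $c$, and a degree count in the Baker--Campbell--Hausdorff formula forces $L\ge c\,n^{(2d+3)/(2d+2)}$).

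Putting the two estimates together gives $\bigl\|\mu(\rho(\delta_n))\bigr\|_{\mathbb E}\le C_\rho\,n\le C_\rho'\,|\delta_n|_{\mathcal B}^{(2d+2)/(2d+3)}=C_\rho'\,|\delta_n|_{\mathcal B}^{\,1-\frac1{2d+3}}$, which is inequality~(\ref{intro-ineq2}); and since $|\delta_n|_{\mathcal B}\to\infty$ while the left-hand side grows strictly sublinearly, the Cartan projection of any representation of $P_{\mathcal B}$ over a local field cannot grow uniformly linearly in $|\cdot|_{\mathcal B}$, so $P_{\mathcal B}$ admits no quasi-isometric embedding into a general linear group over a local field. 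Apart from the distortion statement of the previous paragraph, every step is a direct appeal to Corollary~\ref{mainthm2-fiberproducts} and to the free-group estimates already proved in Theorem~\ref{nonqie1gen}.
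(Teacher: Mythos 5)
Your identification of the sequence $\delta_n$ and the upper bound step are both correct and match the paper: the commutator $u_n$ has weight $2d+3$, it lies in $N_{\mathcal B}$, and Corollary~\ref{mainthm2-fiberproducts} with $r=d+1$ delivers $\|\mu(\rho(\delta_n))\|_{\mathbb E}\le C_\rho n$. The gap is exactly where you flag it: the lower bound $|\delta_n|_{\mathcal B}\gtrsim n^{1+\frac1{2d+2}}$. You propose to control the $P_{\mathcal B}$-word length of $\delta_n$ by converting a geodesic over $P_{\mathcal B}$ into an expression of $u_n$ as a product of conjugates of $\mathcal B$-relators over $F_m$, ``at a bounded multiplicative cost,'' and then invoke the free-group lower-central-series estimate. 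But the conversion is precisely what needs a proof, and the van Kampen analysis you sketch is not carried out; as written, the key reduction is asserted rather than demonstrated.

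The paper (Lemma~\ref{estimate3}) makes this step rigorous by a different mechanism that bypasses small-cancellation diagram geometry. Lift a geodesic expression for $\delta_n$ in $P_{\mathcal B}$ to $F_{m+p}\times F_{m+p}$, so that $\delta_n$ determines two elements $W_n,W_n'$ of $\langle\langle\mathcal R\rangle\rangle$ of length $O(|\delta_n|_{\mathcal B}+n)$, where $\mathcal R$ is the full Rips relator set. Since $\Gamma_{\mathcal B}$ is Gromov hyperbolic, Olshanskii--Sapir \cite[Thm.~2]{OS} says $F_{m+p}\times_{\langle\langle\mathcal R\rangle\rangle}F_{m+p}$ is \emph{undistorted} in $F_{m+p}\times F_{m+p}$; this is what bounds the number of conjugates of $\mathcal R$-relators needed to write $W_n, W_n'$, with conjugating elements of controlled length, linearly in $|\delta_n|_{\mathcal B}+n$. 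One then applies the retract $\eta\colon F_{m+p}\twoheadrightarrow F_m$ killing the extra Rips generators; crucially, $\eta(\mathcal R)\subset\mathcal B^{\pm1}\cup\{1\}$ because the Rips relators are $\omega_iR_i$ or $R_j$ with $R_j\in\langle\langle y_1,\ldots,y_p\rangle\rangle$. This converts the expression into a product of conjugates of $\mathcal B$-elements over $F_m$ with controlled cost, and Lemma~\ref{estimate1} (the Hidber-style rewriting in the lower central series, cf.\ \cite{BMS,Hidber}) finishes the count. Your proposal would be repaired exactly by replacing the hand-waved ``bounded multiplicative cost'' with this undistortedness-plus-retract argument; your alternative route through the nilpotent quotient of $\Gamma_{\mathcal B}$ would additionally require arranging the Rips relators so that $N_{\mathcal B}$ is perfect (which the paper only uses in the separate Corollary~\ref{semisimple-nonqie}), and a BCH degree count that you also leave unspecified. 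Also note one small inaccuracy: you claim the lower bound in the form $|\delta_n|_{\mathcal B}\ge c|w_n|_{\Delta_{\mathcal B}}$, i.e.\ that the natural map $\Delta_{\mathcal B}\to P_{\mathcal B}$ is undistorted on these elements; the paper never proves or needs this comparison between the two fiber products, only the direct lower bound $|\delta_n|_{\mathcal B}\ge cn^{1+\frac1{2d+2}}$, which is weaker and is what the Olshanskii--Sapir route actually gives.
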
 

\noindent For explicit examples of sequences $(\delta_n)_{n\in \mathbb{N}}$, depending only on $d\in \mathbb{N}$ and the finite set $\mathcal{B}$, satisfying the upper bound (\ref{intro-ineq2}), we refer to \hbox{Theorem \ref{nonqie2gen}.}

\par Let us also remark that when $Q_{\mathcal{B}}$ is of type $\mathcal{F}_3$ (i.e. admits a $K(Q_{\mathcal{B}},1)$ Eilenberg--MacLane space whose $3$-skeleton is finite) the fiber product $P_{\mathcal{B}}$ in Theorem \ref{nonqie2} is finitely presented by the 1-2-3-Theorem in \cite{BBMS}. Moreover, it follows by the work of Agol \cite[Cor. 1.2]{Agol} and Wise \cite[Thm. 1.2]{Wise} that the Gromov hyperbolic group $\Gamma$ is virtually special and hence admits a discrete faithful representation into $\mathsf{GL}_{d}(\mathbb{C})$ for some $d \in \mathbb{N}$. In fact, as a consequence of \cite[Thm. 1.3]{DFWZ}, $\Gamma$ admits a complex representation which is a quasi-isometric embedding. Therefore, in the view of these facts and Theorem \ref{nonqie2}, we obtain the following corollary.

\begin{corollary}\label{fp1} Let $\mathcal{B}, N_{\mathcal{B}}, \Gamma_{\mathcal{B}}, Q_{\mathcal{B}}$ and $P_{\mathcal{B}}=\Gamma_{\mathcal{B}}\times_{N_{\mathcal{B}}}\Gamma_{\mathcal{B}}$ be as in Theorem  \ref{nonqie2}. Suppose that the finitely presented group $Q_{\mathcal{B}}$ is of type $\mathcal{F}_3$ \textup{(}e.g. free abelian\textup{)}. Then $P_{\mathcal{B}}$ is a finitely presented subgroup of $\Gamma_{\mathcal{B}} \times \Gamma_{\mathcal{B}}$ which admits a discrete faithful complex representation but does not admit linear representation, over a local field, which is a quasi-isometric embedding.\end{corollary}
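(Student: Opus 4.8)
The plan is to assemble the statement from three inputs: the $1$--$2$--$3$ Theorem of Baumslag--Bridson--Miller--Short for finite presentability, the Agol--Wise theory for the discrete faithful complex representation, and Theorem \ref{nonqie2} for the failure of quasi-isometric embeddings. Since each piece is either quoted from the literature or already proved above, the work of the proof is to check that the hypotheses match up.

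First, for finite presentability, I would verify that the hypotheses of the $1$--$2$--$3$ Theorem \cite{BBMS} are met by the short exact sequence $1 \to N_{\mathcal{B}} \to \Gamma_{\mathcal{B}} \to Q_{\mathcal{B}} \to 1$ from Theorem \ref{nonqie2}: the group $\Gamma_{\mathcal{B}}$ is a $C'(\tfrac16)$ small cancellation group, hence finitely presented; $N_{\mathcal{B}}$ is finitely generated by the Rips construction; and $Q_{\mathcal{B}}$ is of type $\mathcal{F}_3$ by hypothesis. The $1$--$2$--$3$ Theorem then yields that the fiber product $P_{\mathcal{B}} = \Gamma_{\mathcal{B}} \times_{N_{\mathcal{B}}} \Gamma_{\mathcal{B}}$ is finitely presented, and by its very definition $P_{\mathcal{B}}$ is a subgroup of $\Gamma_{\mathcal{B}} \times \Gamma_{\mathcal{B}}$.

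Second, for the discrete faithful complex representation, I would recall that a $C'(\tfrac16)$ group is Gromov hyperbolic and acts properly and cocompactly on a $\mathrm{CAT}(0)$ cube complex (Wise), hence is virtually special by Agol's theorem; consequently $\Gamma_{\mathcal{B}}$ has a finite-index subgroup embedding in a right-angled Artin group, and therefore, as recorded above, $\Gamma_{\mathcal{B}}$ admits a discrete faithful representation $\sigma : \Gamma_{\mathcal{B}} \to \mathsf{GL}_{n}(\mathbb{C})$ for some $n$. Then $\sigma \oplus \sigma : \Gamma_{\mathcal{B}} \times \Gamma_{\mathcal{B}} \to \mathsf{GL}_{2n}(\mathbb{C})$ is again discrete and faithful (block-diagonal embedding into a discrete subgroup), and its restriction to the subgroup $P_{\mathcal{B}}$ is discrete and faithful as well, being the restriction of a faithful homomorphism to a subgroup of a discrete group.

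Finally, Theorem \ref{nonqie2} applies verbatim to $P_{\mathcal{B}}$ and produces a sequence $(\delta_n)_{n\in\mathbb{N}}$ witnessing that no representation of $P_{\mathcal{B}}$ into a general linear group over a local field is a quasi-isometric embedding; in particular the discrete faithful complex representation just exhibited fails to be one. I do not expect a genuine obstacle here, since the substantive content is bundled into Theorem \ref{nonqie2} together with the cited external results; the only care needed is to confirm that the Rips construction indeed outputs a finitely generated $N_{\mathcal{B}}$ together with a $C'(\tfrac16)$ group $\Gamma_{\mathcal{B}}$ and an isomorphism $\Gamma_{\mathcal{B}}/N_{\mathcal{B}} \cong Q_{\mathcal{B}}$, which is the content reviewed in \S\ref{Rips-construction}.
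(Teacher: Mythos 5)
Your proposal is correct and follows essentially the same three-step route as the paper: the 1-2-3 Theorem of \cite{BBMS} for finite presentability, the Agol--Wise virtual specialness of $\Gamma_{\mathcal{B}}$ (restricted to the subgroup $P_{\mathcal{B}}\leqslant \Gamma_{\mathcal{B}}\times\Gamma_{\mathcal{B}}$) for the discrete faithful complex representation, and Theorem \ref{nonqie2} for the failure of quasi-isometric embedding. The only added content is your explicit check that a block-diagonal sum of a discrete faithful representation remains discrete and faithful on the direct product and hence on $P_{\mathcal{B}}$, which the paper leaves implicit.
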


We provide an explicit example and its finite presentation with $6$ generators and $21$ relations in Example \ref{exmp-fp}. To our knowledge the groups provided by Corollary \ref{fp1} are the first known examples of finitely presented subgroups of CAT(0) groups which fail to admit linear representations which are quasi-isometric embeddings.

\subsection*{More examples from Grothendieck pairs.} In \S\ref{Gr-pairs} we provide some other examples of fiber products, which are not commensurable to those of Theorem \ref{nonqie1} and Theorem \ref{nonqie2}, and fail to admit admit any linear representation which is a quasi-isometric embedding. This is achieved thanks to Theorem \ref{PTGr} which is a stronger version of Corollary \ref{mainthm2-fiberproducts} for fiber products of the form $\Gamma\times_N \Gamma$, where $\Gamma/N$ satisfies the conditions of the Platonov--Tavgen criterion \cite{PT} (i.e. $\Gamma/N$ does not admit non-trivial finite quotients and $H_2(\Gamma/N,\mathbb{Z})=0$). The main tool for the proof of Theorem \ref{PTGr} is a theorem of Grothendieck \cite[Thm. 1.2]{Grothendieck} which establishes a connection between the profinite completion of a finitely generated group and its representation theory.

\medskip
\noindent {\bf Organization of the paper.} In \S\ref{prelim} we provide some necessary background on fiber products, the Cartan projection, proximality and prove some preliminary lemmas that we will use in the following sections. In \S\ref{proofs1} we are going to prove Theorem \ref{mainthm2} and in \S\ref{Lemmas} we establish some further necessary lemmas. In \S\ref{Rips-construction} we recall Rips' construction from \cite{Rips} and in \S\ref{proofs2} we prove Theorem \ref{nonqie1} and Theorem \ref{nonqie2}. In \S\ref{Examples} we provide an explicit example of a finite presentation of a fiber product satisfying the conclusion of Theorem \ref{nonqie2}. In \S\ref{Gr-pairs} we  discuss some known constructions of Grothendieck pairs and further examples of fiber products failing to admit representations which are quasi-isometric embeddings.\\

\noindent \textbf{Acknowledgements.} I would like to thank Richard Canary, Fanny Kassel, Claudio Llosa Isenrich, Alan Reid and Mihalis Sykiotis for interesting discussions, as well as the anonymous referees for their comments. I would also like to thank IH\'ES, where this project started, for providing excellent working conditions. This project received funding from the European Research Council (ERC) under the European's Union Horizon 2020 research and innovation programme (ERC starting grant DiGGeS, grant agreement No 715982).

\section{Preliminaries} \label{prelim}
In this section, we provide some necessary background and prove certain lemmas that we are going to use in the following sections of the paper.

\par Let $\Gamma$ be a group. For a subset $\mathcal{F}$ of $\Gamma$, the {\em normal closure of $\mathcal{F}$ in $\Gamma$} is the normal subgroup $\langle \langle \mathcal{F} \rangle \rangle=\big \langle \gamma f \gamma^{-1}:\gamma \in \Gamma, f \in \mathcal{F} \big \rangle$. If $\Gamma$ is a finitely generated group we equip it with a left invariant word metric $d_{\Gamma}:\Gamma \times \Gamma \rightarrow \mathbb{N}$ induced by some finite generating subset $S$ of $\Gamma$. We denote by $|\cdot|_{\Gamma}:\Gamma \rightarrow \mathbb{N}$, $|\gamma|_{\Gamma}=d_{\Gamma}(\gamma,1)$, $\gamma \in \Gamma$, the associated word length function. The group $\Gamma$ is called {\em Gromov hyperbolic} if its Cayley graph with respect to $S$, equipped with $d_{\Gamma}$, is a $\delta$-hyperbolic space. We refer the reader to \cite{Gromov}, \cite[part III]{BH} and \cite{CDP} for more background on Gromov hyperbolic spaces.\par Let us recall once more that if $N$ is a normal subgroup of $\Gamma$, the {\em fiber product of $\Gamma$ with respect to $N$} is the subgroup of $\Gamma \times \Gamma$ generated by the diagonal and the subgroup $\{1\} \times N$: $$\Gamma \times_N\Gamma=\big\{(\gamma,\gamma w) \in \Gamma \times \Gamma:\gamma \in \Gamma, \ w \in N \big\}.$$ 
\begin{fact} If $N=\langle \langle \mathcal{F} \rangle \rangle$, where $\mathcal{F}$ is a finite subset of $\Gamma$, then $\big \{(g,g):g \in S\big\} \cup \big \{(1,h):h \in \mathcal{F}\big\}$ is a finite generating subset of $\Gamma \times_N \Gamma$.\end{fact}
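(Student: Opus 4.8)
The plan is to prove that the subgroup $\langle T\rangle$ generated by $T=\big\{(g,g):g\in S\big\}\cup\big\{(1,h):h\in\mathcal F\big\}$ equals $\Gamma\times_N\Gamma$; finiteness of $T$ is immediate since $S$ and $\mathcal F$ are finite. First I would record that $\Gamma\times_N\Gamma$ is indeed a subgroup of $\Gamma\times\Gamma$, by a direct check: the product $(\gamma_1,\gamma_1w_1)(\gamma_2,\gamma_2w_2)$ equals $\big(\gamma_1\gamma_2,\,\gamma_1\gamma_2\cdot(\gamma_2^{-1}w_1\gamma_2w_2)\big)$ with $\gamma_2^{-1}w_1\gamma_2w_2\in N$ by normality of $N$, and $(\gamma,\gamma w)^{-1}=\big(\gamma^{-1},\gamma^{-1}\cdot(\gamma w^{-1}\gamma^{-1})\big)$ with $\gamma w^{-1}\gamma^{-1}\in N$. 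Then $T\subseteq\Gamma\times_N\Gamma$: each $(g,g)$ lies in the fiber product (take $w=1$) and each $(1,h)=(1,1\cdot h)$ lies in it because $h\in\mathcal F\subseteq N$. Hence $\langle T\rangle\subseteq\Gamma\times_N\Gamma$.

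For the reverse inclusion I would argue in three steps. Since $S$ generates $\Gamma$ and $g\mapsto(g,g)$ is a homomorphism, $\langle\{(g,g):g\in S\}\rangle=\mathrm{diag}(\Gamma)=\{(\gamma,\gamma):\gamma\in\Gamma\}$, so $\mathrm{diag}(\Gamma)\subseteq\langle T\rangle$. Next, for every $\gamma\in\Gamma$ and $h\in\mathcal F$ one has $(\gamma,\gamma)(1,h)(\gamma,\gamma)^{-1}=(1,\gamma h\gamma^{-1})$, which therefore lies in $\langle T\rangle$; as $N=\langle\langle\mathcal F\rangle\rangle$ is by definition generated by $\{\gamma h\gamma^{-1}:\gamma\in\Gamma,\,h\in\mathcal F\}$, every element of $N$ is a product of such conjugates and their inverses, so $\{1\}\times N\subseteq\langle T\rangle$. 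Finally, for an arbitrary element $(\gamma,\gamma w)$ of $\Gamma\times_N\Gamma$ I factor it as $(\gamma,\gamma w)=(\gamma,\gamma)\cdot(1,w)$ with $(\gamma,\gamma)\in\mathrm{diag}(\Gamma)\subseteq\langle T\rangle$ and $(1,w)\in\{1\}\times N\subseteq\langle T\rangle$, giving $\Gamma\times_N\Gamma\subseteq\langle T\rangle$ and hence equality.

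There is no real obstacle here; the statement is essentially bookkeeping around the definition of the normal closure. The only points deserving a line of care are the conjugation identity $(\gamma,\gamma)(1,h)(\gamma,\gamma)^{-1}=(1,\gamma h\gamma^{-1})$ and the fact that the diagonal elements needed to conjugate are already available inside $\langle T\rangle$ (so one does not circularly need the whole of $\Gamma\times_N\Gamma$), together with the preliminary verification that $\Gamma\times_N\Gamma$ is a subgroup, which is what makes the phrase ``generating subset of $\Gamma\times_N\Gamma$'' meaningful and the inclusion $\langle T\rangle\subseteq\Gamma\times_N\Gamma$ legitimate.
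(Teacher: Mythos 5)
Your proof is correct. The paper states this as a Fact without proof, treating it as routine bookkeeping; your argument (diagonal generation, the conjugation identity $(\gamma,\gamma)(1,h)(\gamma,\gamma)^{-1}=(1,\gamma h\gamma^{-1})$ to sweep out $\{1\}\times N$, then the factorization $(\gamma,\gamma w)=(\gamma,\gamma)(1,w)$) is exactly the natural argument one would supply, and the preliminary checks that $\Gamma\times_N\Gamma$ is a subgroup containing $T$ are appropriate.
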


\subsection{Proximality} We denote by $k$ a local field, i.e $\mathbb{R}$, $\mathbb{C}$ or a finite extension of $\mathbb{Q}_p$ or the field of formal Laurent series $\mathbb{F}_q((t))$ over the finite field $\mathbb{F}_q$ and by $|\cdot|:k \rightarrow \mathbb{R}^{+}$ the absolute value on $k$. If $k$ is Archimedean (i.e. $k=\mathbb{R}$ or $\mathbb{C}$) $|\cdot|$ is the standard Euclidean absolute value. If $k$ is non-Archimedean and $\omega:k \rightarrow \mathbb{Z}\cup \{\infty\}$ is a discrete valuation on $k$, the absolute value is $|\cdot|=q^{-\omega(\cdot)}$, where $q\in \mathbb{N}$ is the cardinality of the residue field of $k$. The ring of integers of $k$ is $\mathcal{O}:=\big \{x \in K:|x|\leq 1\big \}$ and we fix $\pi \in \mathcal{O}$ a uniformizer with $\omega(\pi)=1$.

\par The algebra of $d\times d$ matrices with entries in $k$ is denoted by $\mathfrak{gl}_d(k)$. The $\ell_2$-matrix norm \hbox{$||\cdot||:\mathfrak{gl}_d(k) \rightarrow \mathbb{R}^{+}$} is defined as follows $$\big|\big|B\big|\big|=\Big({\sum_{i,j=1}^d\big|b_{ij}\big|^2}\Big)^{\frac{1}{2}}$$ where $B=(b_{ij})_{i,j =1}^{d}, b_{ij}\in k$. For $1 \leq i,j \leq d$, $E_{ij}$ denotes the elementary matrix with $1$ at the $(i, j)$-entry and $0$ everywhere else.
\par  Let $g \in \mathsf{GL}_d(k)$ be a matrix. The eigenvalues of $g$ lie in a finite extension $k'$ of $k$. We denote by $\ell_1(g)\geq \ldots \geq \ell_d(g)$ the absolute values of the eigenvalues of $g$ in non-increasing order. Note that for every $1 \leq i \leq d-1$ we have $\ell_{i}(g^{-1})=\ell_{d-i+1}(g)^{-1}$. An element $g \in \mathsf{GL}_d(k)$ is called {\em 1-proximal} if $\ell_1(g)>\ell_2(g)$. If $g$ is $1$-proximal then it has a unique eigenvalue of maximum absolute value, necessarily in $k$. In this case, $g\in \mathsf{GL}_d(k)$ admits a unique attracting fixed point $x_g^{+}\in \mathbb{P}(k^d)$ and a repelling $(d-1)$-hypeplane $V_{g}^{-}$ such that $k^d=x_{g}^{+}\oplus V_{g}^{-}$ and for every $y \in \mathbb{P}(k^d)\smallsetminus \mathbb{P}(V_{g}^{-})$, $\lim_{n}g^ny=x_{g}^{+}$.

\subsection{Cartan and Lyapunov projection on $\mathsf{G}=\mathsf{GL}_{d}(k)$.} Let us consider the cone of $\mathbb{R}^d$ $$\mathsf{E}^{+}:=\big\{(x_1,\ldots,x_d) \in \mathbb{R}^d:x_1 \geq \ldots \geq x_d\big\}.$$
\noindent (a) {\em $k$ is Archimedean}. For a matrix $g=(g_{ij})_{i,j=1}^{d}$ denote by $g^{\ast}=(\overline{g_{ji}})_{i,j=1}^d$ and $g^t=(g_{ji})_{i,j=1}^{d}$ the conjugate transpose and transpose of $g$ respectively. If $k=\mathbb{R}$ (resp. $k=\mathbb{C}$) let $K=\mathsf{O}(d)$ (resp. $K=\mathsf{U}(d)$) be the (unique up to conjugation) maximal compact subgroup of $\mathsf{G}$. A maximal $k$-split torus of $\mathsf{G}$ is the set of diagonal matrices with entries in $k$ with dominant Weyl chamber $$A^{+}=\big \{\textup{diag}\big(a_1,\ldots, a_d\big): a_1 \geq \ldots \geq a_d, \ a_i \in \mathbb{R}^{+}\big\}.$$ The corresponding Cartan decomposition is $\mathsf{G}=KA^{+}K$, i.e. every $g \in \mathsf{G}$ is written in the form $g=w_ga_gw_g'$, where $w_g,w_g' \in K$ and $a_g \in A^{+}$. The {\em Cartan projection} is the map $\mu:\mathsf{G}\rightarrow \mathsf{E}^{+}$ defined as follows $$\mu(g)=\big(\log\sigma_1(g),\ldots,\log \sigma_d(g) \big)$$ where $\sigma_i(g)=\sqrt{\ell_i(gg^{\ast})}$ is the {\em $i$-th singular value of $g$}. The map $\mu$ is a continuous, proper and surjective map. 
\medskip

\noindent (b) {\em $k$ is non-Archimedean.} Let $p\in \mathbb{N}$ be the cardinality of the residue field of $k$. The group \hbox{$K=\mathsf{GL}_d(\mathcal{O})$} is the unique, up to conjugation, maximal compact subgroup of $\mathsf{G}$. A maximal $k$-split torus of $\mathsf{G}$ is the set of diagonal matrices with entries in $k$ with dominant Weyl chamber $$A^{+}=\big \{\textup{diag}\big(a_1,\ldots, a_d\big): |a_1| \geq \ldots \geq |a_d|,\ a_i \in k^{\ast} \big\}$$ and the corresponding Cartan decomposition is $\mathsf{G}=KA^{+}K$. Every matrix $g \in \mathsf{G}$ can be written in the form $g=w_ga_g^{+}w_{g}'$, where $w_g,w_g' \in K$ and $$a_g^{+} =\textup{diag}\big(\pi^{n_1(g)},\ldots, \pi^{n_d(g)}\big)$$ where $n_1(g),\ldots,n_d(g)\in \mathbb{Z}$ and $n_d(g) \geq \ldots \geq n_1(g)$. For $1 \leq i \leq d$, the \hbox{{\em $i$-th singular value of $g$} is} $$\sigma_i(g)=\big|\pi^{n_i(g)}\big|=p^{-n_i(g)}.$$ The Cartan projection $\mu:\mathsf{G} \rightarrow \mathsf{E}^{+}$ is the vector valued map $$\mu(g)=\big(\log \sigma_1(g),\ldots \log \sigma_d(g) \big), \ g \in \mathsf{G}.$$ In this case, $\mu$ is  proper and continuous map but not surjective onto $\mathsf{E}^{+}$.
\par In both cases where $k$ is Archimedean or not, the {\em Lyapunov projection} is the map $\lambda: \mathsf{G}\rightarrow \mathsf{E}^{+}$ defined in terms of the Cartan projection as follows $$\lambda(g)=\lim_{r \rightarrow \infty}\frac{1}{r}\mu(g^r)=\big(\log \ell_1(g),\ldots \log \ell_d(g) \big), \ g \in \mathsf{G}.$$

Let us also recall the definition of a linear representation, over a local field, being a quasi-isometric embedding.

\begin{definition} Let $\mathsf{H}$ be a finitely generated group and fix $|\cdot|_{\mathsf{H}}:\mathsf{H} \rightarrow \mathbb{N}$ a word length function on $\mathsf{H}$. A representation $\rho:\mathsf{H} \rightarrow \mathsf{GL}_d(k)$ is called a quasi-isometric embedding if there exists $C>1$ such that for every $\gamma \in \mathsf{H}$ we have $$C^{-1}\big|\gamma \big|_{\mathsf{H}}-C \leq \Big| \Big| \mu \big(\rho(\gamma)\big)\Big|\Big|_{\mathbb{E}} \leq C\big|\gamma \big|_{\mathsf{H}}+C.$$ \end{definition}

\begin{rmk}\normalfont{If $k=\mathbb{R}$ or $\mathbb{C}$, let $(X,\mathsf{d})$ be the Riemannian symmetric space $\mathsf{G}/K$ equipped with the Killing metric $\mathsf{d}$. If $k$ is non-Archimedean, $(X,\mathsf{d})$ denotes the Bruhat--Tits building on which $\mathsf{G}$ acts properly by isometries, see \cite{BT} for more details. A representation $\rho:\mathsf{H}\rightarrow \mathsf{GL}_d(k)$ is a quasi-isometric embedding if and only if the orbit map $\tau_{\rho}:(\mathsf{H},d_{\mathsf{H}})\rightarrow (X,\mathsf{d})$, $\tau_{\rho}(\gamma)=\rho(\gamma)x_0, \gamma \in \mathsf{H}$, is a quasi-isometric embedding between metric spaces.}\end{rmk}

\subsection{Semisimple representations} Let $\mathsf{\Gamma}$ be a group. A representation $\rho:\mathsf{\Gamma} \rightarrow \mathsf{GL}_d(k)$ is called {\em semisimple} if $\rho$ decomposes as a direct sum of irreducible representations.

\begin{fact} \label{normal} Let $\mathsf{\Gamma}$ be a group and $\mathsf{\Gamma}'\subset \mathsf{\Gamma}$ be a normal subgroup. Suppose that $\rho:\mathsf{\Gamma} \rightarrow \mathsf{GL}_d(k)$ is a semisimple representation. Then the restriction $\rho|_{\mathsf{\Gamma}'}:\mathsf{\Gamma}'\rightarrow \mathsf{GL}_d(k)$ is semisimple.\footnote{If $\rho$ is irreducible and $V'\subset k^d$ is a $\rho(\mathsf{\Gamma}')$-invariant $k$-subspace of minimum positive dimension, then, by using induction, there exist $\gamma_1,\ldots,\gamma_r\in \mathsf{\Gamma}$ such that $k^d=\bigoplus_{i=1}^{r}\rho(\gamma_i)V'$.}\end{fact}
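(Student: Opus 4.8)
The plan is to reduce immediately to the case where $\rho$ is irreducible, since a direct sum of representations whose restrictions are semisimple clearly has semisimple restriction. So suppose $\rho:\mathsf{\Gamma}\to\mathsf{GL}_d(k)$ is irreducible and $\mathsf{\Gamma}'\trianglelefteq\mathsf{\Gamma}$ is normal; I want to show $V=k^d$ is a semisimple $k[\mathsf{\Gamma}']$-module. The key observation, which I would state as the first step, is the following stability property: if $W\subseteq V$ is a $\rho(\mathsf{\Gamma}')$-invariant subspace, then for every $\gamma\in\mathsf{\Gamma}$ the translate $\rho(\gamma)W$ is again $\rho(\mathsf{\Gamma}')$-invariant. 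This is exactly where normality is used: for $w\in\mathsf{\Gamma}'$ one has $\rho(w)\rho(\gamma)W=\rho(\gamma)\rho(\gamma^{-1}w\gamma)W=\rho(\gamma)W$ since $\gamma^{-1}w\gamma\in\mathsf{\Gamma}'$. In particular the sum $\sum_{\gamma\in\mathsf{\Gamma}}\rho(\gamma)W$ is a $\rho(\mathsf{\Gamma})$-invariant subspace of $V$, hence equals $0$ or $V$ by irreducibility.

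Next I would run a minimal-dimension argument, following the hint in the footnote. Choose $V'\subseteq V$ a nonzero $\rho(\mathsf{\Gamma}')$-invariant subspace of minimal positive dimension; such a $V'$ is automatically an irreducible $k[\mathsf{\Gamma}']$-module. By the stability property and irreducibility of $\rho$, $V=\sum_{\gamma\in\mathsf{\Gamma}}\rho(\gamma)V'$, and since $\dim_k V<\infty$ there exist finitely many $\gamma_1,\dots,\gamma_r\in\mathsf{\Gamma}$ with $V=\sum_{i=1}^r\rho(\gamma_i)V'$. Each $\rho(\gamma_i)V'$ is $\rho(\mathsf{\Gamma}')$-invariant and, being isomorphic as a $k$-vector space to $V'$ via $\rho(\gamma_i)$, is also irreducible as a $k[\mathsf{\Gamma}']$-module (the $\mathsf{\Gamma}'$-invariant subspaces of $\rho(\gamma_i)V'$ are exactly the images under $\rho(\gamma_i)$ of the $\mathsf{\Gamma}'$-invariant subspaces of $V'$, again by the normality computation). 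Thus $V$ is a finite sum of irreducible $k[\mathsf{\Gamma}']$-submodules, and a standard lemma in module theory then gives that $V$ is in fact a direct sum of a subfamily of them, hence semisimple; this is the desired conclusion for $\rho|_{\mathsf{\Gamma}'}$.

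The only genuinely substantive point — and the one I would flag as the main thing to get right — is the stated module-theoretic lemma that a module which is a (possibly redundant) sum of simple submodules is semisimple: one extracts a maximal subfamily whose sum is direct, using finiteness here to avoid any Zorn's lemma subtlety, and checks that this direct sum is everything because any simple summand not contained in it would intersect it trivially, contradicting maximality. Everything else is the normality identity $\rho(w)\rho(\gamma)=\rho(\gamma)\rho(\gamma^{-1}w\gamma)$ applied repeatedly, together with finite-dimensionality of $k^d$. I do not expect any difficulty specific to the local field $k$: the argument is purely algebraic and works over any field.
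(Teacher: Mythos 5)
Your proof is correct and follows essentially the same route as the paper's own sketch: the footnote to Fact \ref{normal} indicates precisely the minimal-dimension argument you spell out — pick a $\rho(\mathsf{\Gamma}')$-invariant subspace $V'$ of minimal positive dimension, use normality to see that $\rho(\gamma)V'$ is again $\rho(\mathsf{\Gamma}')$-invariant and irreducible, use irreducibility of $\rho$ and finite-dimensionality to write $k^d$ as a sum of finitely many translates, and then extract a direct sum. Your version supplies the details (the normality identity, the irreducibility of each translate, and the ``sum of simples is semisimple'' lemma) that the paper leaves as a one-line hint, but it is the same argument.
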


\par The following result, established by Benoist in \cite{benoist-limitcone}, using a result of Abels--Margulis--Soifer from \cite{AMS}, offers a connection between eigenvalues and singular values of elements in the image of a semisimple linear representation over a local field. For a proof of the following result, in the case where $k$ is Archimedean, we refer the reader to \cite[Thm. 4.12]{GGKW}.

\begin{theorem}\textup{\big(\textup{Abels--Margulis--Soifer} \cite{AMS}, \textup{Benoist} \cite{benoist-limitcone}\big)} \label{finitesubset} Let $k$ be a local field. Suppose that $\mathsf{\Gamma}$ is a group and \hbox{$\rho:\mathsf{\Gamma} \rightarrow \mathsf{GL}_{d}(k)$} is a semisimple representation. There exist a finite subset $F$ of $\mathsf{\Gamma}$ and $C_{\rho}>0$ with the property: for every $g \in \mathsf{\Gamma}$ there exists $f \in F$ such that $$\Big|\Big| \mu\big(\rho(g)\big)-\lambda\big(\rho(g f)\big)\Big|\Big|_{\mathbb{E}}\leq C_{\rho}.$$ \end{theorem}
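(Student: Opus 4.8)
\medskip
\noindent The plan is to reduce the statement to the quantitative proximality theorem of Abels--Margulis--Soifer, via the standard dictionary between the Cartan and Lyapunov projections of a reductive group and the top singular values and spectral radii in its fundamental representations, together with the elementary comparison between the top singular value and the spectral radius of a proximal matrix whose attracting point and repelling hyperplane are uniformly transverse. First I would pass to $\Gamma':=\rho(\mathsf{\Gamma})\leq\mathsf{GL}_d(k)$ and note that, by submultiplicativity of singular values, $\sigma_i(Bf)/\sigma_i(B)\in[\sigma_d(f),\sigma_1(f)]$ for all $i$ and all $B,f\in\mathsf{GL}_d(k)$, so $\|\mu(\rho(g))-\mu(\rho(gf))\|_{\mathbb E}$ is bounded in terms of $f$ alone; hence it suffices to produce a finite $F\subseteq\Gamma'$ and $C>0$ such that every $g\in\Gamma'$ admits $f\in F$ with $\|\mu(gf)-\lambda(gf)\|_{\mathbb E}\leq C$. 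Here I would invoke semisimplicity of $\rho$: it forces the Zariski closure $\mathbf G$ of $\Gamma'$ to be reductive, with $\Gamma'$ Zariski dense in it. This is the only use of semisimplicity, and it is essential: without it the conclusion already fails for $\mathbb Z$ embedded as unipotent upper-triangular matrices, where the left-hand side is unbounded while every $\lambda(\rho(gf))$ vanishes.

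Next I would work with the Cartan and Lyapunov projections $\mu_{\mathbf G},\lambda_{\mathbf G}$ of $\mathbf G$, valued in its Weyl chamber $\mathfrak a^+$ (a subset of $\mathbb R^d$), after replacing $\mathbf G$ and $\Gamma'$ by $\mathbf G^{\circ}$ and a finite-index subgroup, which is harmless once $F$ is enlarged. The $\mathsf{GL}_d$-projections are obtained from the $\mathbf G$-ones by sorting the coordinates into decreasing order, and sorting is $1$-Lipschitz for $\|\cdot\|_{\mathbb E}$, so $\|\mu(\rho(h))-\lambda(\rho(h))\|_{\mathbb E}\leq\|\mu_{\mathbf G}(h)-\lambda_{\mathbf G}(h)\|$; it therefore suffices to bound $\langle\varpi_\alpha,\mu_{\mathbf G}(h)-\lambda_{\mathbf G}(h)\rangle$ for each simple restricted root $\alpha$ of $\mathbf G$, with $\varpi_\alpha$ the corresponding fundamental weight. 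Realizing the fundamental representation $V_{\varpi_\alpha}$ inside a tensor/exterior power of $k^d$ equipped with a $K$-invariant norm gives $\langle\varpi_\alpha,\mu_{\mathbf G}(h)\rangle=\log\sigma_1(h|_{V_{\varpi_\alpha}})$ and $\langle\varpi_\alpha,\lambda_{\mathbf G}(h)\rangle=\log\ell_1(h|_{V_{\varpi_\alpha}})$. I would then apply the standard estimate: $\ell_1(B)\leq\sigma_1(B)$ always, and if $B$ is $1$-proximal with $\sigma_1(B)/\sigma_2(B)\geq 1/\epsilon$ and $d_{\mathbb P}(x_B^+,\mathbb P(V_B^-))\geq\epsilon$, then $\ell_1(B)\geq c(\epsilon)\,\sigma_1(B)$ for some $c(\epsilon)>0$. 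Consequently, if for a fixed $\epsilon>0$ the element $h$ is $\epsilon$-proximal in this sense in every $V_{\varpi_\alpha}$ --- that is, uniformly $\mathbf G$-loxodromic --- then $\|\mu_{\mathbf G}(h)-\lambda_{\mathbf G}(h)\|\leq C(\epsilon,\mathbf G)$, and the problem is reduced to making $gf$ uniformly $\mathbf G$-loxodromic for a suitable $f$ drawn from a fixed finite set.

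The hard part is exactly this last reduction, which is the content of the Abels--Margulis--Soifer theorem: for a Zariski dense subgroup $\Gamma'$ of a reductive $k$-group $\mathbf G$ there exist a finite ``correcting set'' $F\subseteq\Gamma'$ and $C>0$ so that for every $g\in\Gamma'$ some $f\in F$ satisfies $\|\mu(\rho(gf))-\lambda(\rho(gf))\|_{\mathbb E}\leq C$ --- for $g$ with $\|\mu(\rho(g))\|_{\mathbb E}$ large one obtains $gf$ uniformly loxodromic, while for the bounded remaining $g$ one takes $f=1$ and uses that $\lambda(\rho(g))$ is majorized by $\mu(\rho(g))$, whence $\|\lambda(\rho(g))\|_{\mathbb E}\leq\|\mu(\rho(g))\|_{\mathbb E}$. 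Its proof builds $F$ by a quantitative ping-pong argument founded on the existence of $\mathbf G$-loxodromic elements inside the Zariski dense $\Gamma'$, absorbing an arbitrary element into one of finitely many uniformly transverse configurations; I expect all of the difficulty to sit in this construction, everything else being formal. Benoist's contribution (\cite{benoist-limitcone}, building on \cite{AMS}) is to carry this through over an arbitrary local field and to read it off on the spectral side, while \cite{GGKW} gives the exposition in the Archimedean case. Granting this, combining the three steps above produces the finite set $F$ and the constant $C_\rho$.
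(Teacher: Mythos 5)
The paper does not prove Theorem \ref{finitesubset}: it is cited to Abels--Margulis--Soifer and Benoist, with \cite[Thm.\ 4.12]{GGKW} offered as an Archimedean reference. Your sketch is a faithful account of the argument in those references --- reduce via submultiplicativity to bounding $\|\mu(\rho(gf))-\lambda(\rho(gf))\|_{\mathbb{E}}$, use semisimplicity only to pass to a reductive Zariski closure, translate to top singular value versus spectral radius in fundamental representations, and invoke the AMS quantitative ping-pong to make $gf$ uniformly loxodromic after multiplication by one of finitely many correctors --- so it is correct and takes essentially the same route as the sources the paper leans on.
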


We will also need the following lemma.

\begin{lemma} \label{restriction} Let $k$ be a local field and $\mathsf{H}$ be a group. Suppose that $\psi:\mathsf{H} \times \mathsf{H} \rightarrow \mathsf{GL}_m(k)$ is a semisimple representation such that $\psi(\{1\}\times \mathsf{H})$ contains a $1$-proximal element. Then there exist finitely many group homomorphisms $\varepsilon_1,\ldots,\varepsilon_{p}:\mathsf{H} \rightarrow k^{\ast}$ with the property: if $g\in \mathsf{H}$, $\psi(1,g)$ is $1$-proximal and $v_{g}^{+}\in k^d$ is an eigenvector of $\psi(1,g)$ with respect to its eigenvalue of maximum modulus, then there exists $1 \leq r \leq p$ such that for every $h\in \mathsf{H}$, $$\psi(h,1)v_{g}^{+}=\varepsilon_r(h)v_{g}^{+}.$$ \end{lemma}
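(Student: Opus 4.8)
The plan is to exploit the fact that $\{1\}\times\mathsf H$ and $\mathsf H\times\{1\}$ commute inside $\mathsf H\times\mathsf H$, so that $\psi(h,1)$ preserves the eigenspaces (for each eigenvalue) of $\psi(1,g)$ for every $g$. First I would reduce to the irreducible case: since $\psi$ is semisimple, write $\psi=\bigoplus_{i=1}^s\psi_i$ with $\psi_i:\mathsf H\times\mathsf H\to\mathsf{GL}_{m_i}(k)$ irreducible, and observe that a $1$-proximal element of $\psi(\{1\}\times\mathsf H)$ must have its top eigenvalue realized in exactly one summand $\psi_{i_0}$ (the other summands contribute eigenvalues of strictly smaller modulus on that element, or rather: the top eigenline lies in one isotypic block). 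More precisely, if $\psi(1,g)$ is $1$-proximal with attracting line $\ell_g^+=k v_g^+$, then $v_g^+$ lies in a single irreducible summand $V_{i_0}$ because the top eigenvalue of $\psi(1,g)$ on $k^d$ equals the top eigenvalue on some $V_i$ and has multiplicity one overall. So it suffices to produce, for each irreducible summand $\psi_i$, a character $\varepsilon_i:\mathsf H\to k^\ast$ that works whenever the proximal element's attracting line lands in $V_i$.

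Next, fix an irreducible summand $V=V_i$ and suppose $g\in\mathsf H$ is such that $\psi(1,g)|_V$ is $1$-proximal with attracting line $\ell_g^+\subset V$. Since $\psi(h,1)$ commutes with $\psi(1,g)$, it preserves each generalized eigenspace of $\psi(1,g)$, in particular the one-dimensional top eigenspace $\ell_g^+$; hence there is a scalar $\chi_g(h)\in k^\ast$ with $\psi(h,1)v_g^+=\chi_g(h)v_g^+$, and $\chi_g:\mathsf H\to k^\ast$ is a group homomorphism. The point is that $\chi_g$ does not depend on $g$. To see this, restrict $\psi$ to $\mathsf H\times\{1\}$: by Fact \ref{normal} this restriction is semisimple, so $V$ decomposes as $V=\bigoplus_j W_j$ into $\psi(\mathsf H\times\{1\})$-isotypic (indeed irreducible) pieces; but the attracting line $\ell_g^+$ is a $\psi(\mathsf H\times\{1\})$-invariant line, hence a one-dimensional subrepresentation of $\psi|_{\mathsf H\times\{1\}}$ on which $\mathsf H$ acts by the character $\chi_g$. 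I would then argue that only finitely many characters of $\mathsf H$ can occur this way: the set of characters $\eta:\mathsf H\to k^\ast$ such that $V$ contains a $\psi(\mathsf H\times\{1\})$-subrepresentation isomorphic to $\eta$ is finite, because $V$ is finite-dimensional and such characters correspond to distinct isotypic components (or to the one-dimensional constituents in a decomposition of $V|_{\mathsf H\times\{1\}}$ into irreducibles — a finite list). Taking $\varepsilon_1,\ldots,\varepsilon_p$ to be the union over the finitely many summands $V_i$ of these finitely many characters gives the desired list.

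The main obstacle is the independence-of-$g$ claim, i.e.\ controlling which character of $\mathsf H$ the attracting line of a proximal $\psi(1,g)$ can carry, uniformly over all such $g$. The resolution is exactly the semisimplicity of $\psi|_{\mathsf H\times\{1\}}$ (Fact \ref{normal}): any such attracting line is a one-dimensional summand, so its character is one of the finitely many characters appearing in the decomposition of the finite-dimensional space $k^d$ under $\mathsf H\times\{1\}$. One technical point to handle carefully is that $\ell_g^+$ genuinely lies in a single irreducible $\psi$-summand $V_i$ — this uses that the top eigenvalue of a $1$-proximal element is simple, so its eigenline cannot be spread across two summands; equivalently, projecting $v_g^+$ to the summands, at least one projection is a top eigenvector in that summand, and simplicity forces all but one projection to vanish. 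With that in hand, the rest is bookkeeping: collect the finitely many characters from each of the finitely many irreducible summands of $\psi$, and these $\varepsilon_1,\ldots,\varepsilon_p$ satisfy the conclusion.
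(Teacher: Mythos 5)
Your proof is correct, but it takes a genuinely different route from the paper's. The paper first forms $V$, the $k$-span of all attracting eigenlines of $1$-proximal elements of $\psi(\{1\}\times\mathsf{H})$, applies Fact~\ref{normal} to the normal subgroup $\{1\}\times\mathsf{H}$ to decompose $V=V_1\oplus\cdots\oplus V_q$ into irreducible $\{1\}\times\mathsf{H}$-modules, and then proves the stronger structural statement that each $\psi(h,1)$ acts as a \emph{scalar} on each $V_i$: one diagonalizes $\psi(h,1)|_{V_i}$ over $k$ in the basis of attracting eigenvectors and observes that a non-scalar diagonal form would force the commuting irreducible $\psi(\{1\}\times\mathsf{H})|_{V_i}$ to preserve a proper eigenspace, a contradiction. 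You instead decompose $k^m$ into irreducible $\mathsf{H}\times\mathsf{H}$-modules, localize the attracting line of a $1$-proximal $\psi(1,g)$ to a single summand (using simplicity of the top eigenvalue), and then apply Fact~\ref{normal} to the \emph{other} factor $\mathsf{H}\times\{1\}$ to see that the attracting line is a one-dimensional subrepresentation of a finite-dimensional semisimple $\mathsf{H}\times\{1\}$-module, whence its character is one of the finitely many appearing in that isotypic decomposition. Both arguments use the same ingredients (commutativity of the two factors, Fact~\ref{normal}, and a Schur-type finiteness) but arranged dually; the paper's variant delivers the cleaner conclusion that $\psi(\mathsf{H}\times\{1\})$ acts by a single character on each irreducible $\{1\}\times\mathsf{H}$-piece of $V$, while yours only pins each attracting line's character to a finite list, which is exactly (and only) what the lemma requires. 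One expository slip: you announce that $\chi_g$ does not depend on $g$, but your argument actually establishes only that $\chi_g$ ranges over a finite set of characters; the stronger independence claim is neither proved nor needed, so the wording should be softened.
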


\begin{proof} Observe that $\{1\} \times \mathsf{H}$ is a normal subgroup of $\mathsf{H} \times \mathsf{H}$ and hence Fact \ref{normal} implies that $\psi|_{\{1\}\times \mathsf{H}}$ is a semisimple representation since $\psi$ is. Let $V$ be the vector $k$-subspace of $k^m$ spanned by the attracting eigenlines of $1$-proximal elements of $\psi(\{1\}\times \mathsf{H})$. Since $\psi|_{\{1\}\times \mathsf{H}}$ is semisimple, there exists a decomposition $$V=V_{1}\oplus \cdot \cdot \cdot \oplus V_{q}$$ into $\psi(\{1\}\times \mathsf{H})$-invariant subspaces such that the restriction of \hbox{$\psi|_{V_i}:\{1\}\times \mathsf{H} \rightarrow \mathsf{GL}(V_i)$} is irreducible and its image contains a $1$-proximal element. Note that for each $i$, the subspace $V_{i}$ contains a basis $\mathcal{B}_i\subset V_i$, consisting entirely of attracting eigenvectors of $1$-proximal elements of $\psi(\{1\}\times \mathsf{H})|_{V_{i}}$. Since $\psi(\mathsf{H}\times \{1\})$ centralizes $\psi(\{1\} \times \mathsf{H})$, $\psi(\mathsf{H}\times \{1\})$ fixes every vector in $\mathcal{B}_i$\footnote{Note that if $g,h\in \mathsf{GL}_d(k)$ are commuting and $g$ is $1$-proximal, then $hx_{g}^{+}, x_{g}^{+}\in \mathbb{P}(k^d)$ are attracting eigenvectors for $g$ and hence $hx_{g}^{+}=x_{g}^{+}$.} for every $1 \leq i \leq q$. \par Now let us fix $h \in \mathsf{H}$. Up to conjugation by an element of $\mathsf{GL}(V_i)$, the restriction $\psi(h,1)|_{V_i}$ on $V_i$ is a diagonal matrix of the form $$\textup{diag}\big(q_{i1}\textup{I}_{d_{1}},\ldots,q_{is}\textup{I}_{d_{s}}\big)$$ where $q_{ij}\in k^{\ast}$ and $\sum_{j=1}^{s}d_{j}=\textup{dim}_k(V_i)$, commuting with the irreducible subgroup $\psi(\{1\}\times \mathsf{H})|_{V_i}$ of $\mathsf{GL}(V_i)$. Now suppose that there exists $j\in \{1,\ldots, s\}$ such that $q_{ij} \neq q_{i\ell}$ for every $\ell \neq j$. Up to conjugation by an element of $\mathsf{GL}(V_i)$, we may assume that $j=1$. Since $\psi(h,1)$ centralizes $\psi(\{1\} \times \mathsf{H})|_{V_i}$, we check that for every $w \in \mathsf{H}$ the first row of $\psi(1,w)$ consists entirely of zeros except in the $(1,1)$ entry. It follows that $\psi(\{1\}\times \mathsf{H})|_{V_i}$ preserves a proper subspace of $V_i$ which is a contradiction. Therefore, $q_{i1}=\cdots =q_{is}$ and $s=1$. In conclusion, $\psi(h,1)$ acts as scalar multiple of the identity on $V_{i}$ for every $h \in \mathsf{H}$ and $1 \leq i \leq q$. 
\end{proof}

Denote by $\mathfrak{gl}_d(k)$ the algebra of $d\times d$ matrices with entries in $k$. Let $\Delta$ be a group and $\psi:\Delta \rightarrow \mathsf{GL}_d(k)$ be a representation. The representation $\psi$ is called {\em spanning} if the vector subspace of $\mathfrak{gl}_d(k)$ spanned by $\psi(\Delta)$ is $\mathfrak{gl}_d(k)$. We close this section with the following analogue of Goursat's lemma describing the subspace generated by a product of two spanning representations.

\begin{lemma} \label{subalgebra} Let $k$ be a local field and $\Delta$ be a group. Suppose that $\rho_1:\Delta \rightarrow \mathsf{GL}_m(k)$ and \hbox{$\rho_2:\Delta \rightarrow \mathsf{GL}_n(k)$} are two spanning representations. Let $\mathfrak{g}\subset \mathfrak{gl}_{m}(k) \times \mathfrak{gl}_{n}(k)$ be the $k$-vector subspace spanned by $(\rho_1\times \rho_2)(\Delta):=\big\{\big(\rho_1(\gamma),\rho_2(\gamma)\big):\gamma  \in \Delta \big\}$. Then one of the following holds:\\
\noindent \textup{(i)} $\mathfrak{g}=\mathfrak{gl}_{m}(k) \times \mathfrak{gl}_{n}(k)$.\\
\noindent \textup{(ii)} $n=m$ and there exists $h \in \mathsf{GL}_n(k)$ such that $\rho_2(\gamma)=h\rho_1(\gamma)h^{-1}$ for every $\gamma \in \Delta$.\end{lemma}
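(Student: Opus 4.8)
The strategy is to observe that $\mathfrak{g}$ is not merely a subspace but a unital $k$-subalgebra of the semisimple algebra $\mathfrak{gl}_m(k)\times\mathfrak{gl}_n(k)$, and then to run the usual Goursat dichotomy for algebras together with the Skolem--Noether theorem. First I would record the elementary fact that the $k$-span of $\rho_1(\Delta)$ is a subalgebra of $\mathfrak{gl}_m(k)$: it is closed under multiplication since $\rho_1(\gamma)\rho_1(\gamma')=\rho_1(\gamma\gamma')$, and it contains $\rho_1(1)=\mathrm{I}_m$; as $\rho_1$ is spanning, this span is all of $\mathfrak{gl}_m(k)$, and likewise for $\rho_2$. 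The same computation shows that $\mathfrak{g}$ is a unital $k$-subalgebra of $\mathfrak{gl}_m(k)\times\mathfrak{gl}_n(k)$, and that the two coordinate projections $\pi_1,\pi_2$ map $\mathfrak{g}$ onto $\mathfrak{gl}_m(k)$ and $\mathfrak{gl}_n(k)$ respectively, their images being exactly the spans of $\rho_1(\Delta)$ and $\rho_2(\Delta)$.

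Next I would set $\mathfrak{a}=\ker(\pi_2|_{\mathfrak{g}})=\mathfrak{g}\cap\big(\mathfrak{gl}_m(k)\times\{0\}\big)$ and $\mathfrak{b}=\ker(\pi_1|_{\mathfrak{g}})=\mathfrak{g}\cap\big(\{0\}\times\mathfrak{gl}_n(k)\big)$. Using that $\pi_1$ is surjective, one checks directly that $\mathfrak{a}$ is a two-sided ideal of $\mathfrak{gl}_m(k)$; since the matrix algebra is simple, either $\mathfrak{a}=0$ or $\mathfrak{a}=\mathfrak{gl}_m(k)\times\{0\}$, and symmetrically $\mathfrak{b}$ is $0$ or $\{0\}\times\mathfrak{gl}_n(k)$. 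If both are full then $\mathfrak{g}\supseteq\mathfrak{a}+\mathfrak{b}=\mathfrak{gl}_m(k)\times\mathfrak{gl}_n(k)$, which is alternative (i). Otherwise, by the symmetry between the two factors we may assume $\mathfrak{a}=0$; then $\pi_2|_{\mathfrak{g}}:\mathfrak{g}\to\mathfrak{gl}_n(k)$ is an injective and surjective algebra homomorphism, so $\mathfrak{g}$ is simple, whence the ideal $\mathfrak{b}$ is $0$ or $\mathfrak{g}$. The case $\mathfrak{b}=\mathfrak{g}$ is impossible, as it would force $\pi_1|_{\mathfrak{g}}=0$, contradicting surjectivity of $\pi_1$. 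Hence $\mathfrak{b}=0$, so $\pi_1|_{\mathfrak{g}}$ is also a bijective algebra homomorphism, and $\phi:=(\pi_2|_{\mathfrak{g}})\circ(\pi_1|_{\mathfrak{g}})^{-1}$ is a unital $k$-algebra isomorphism $\mathfrak{gl}_m(k)\to\mathfrak{gl}_n(k)$ with $\mathfrak{g}=\{(X,\phi(X)):X\in\mathfrak{gl}_m(k)\}$; comparing dimensions yields $m=n$.

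Finally, under the identification $\mathfrak{gl}_n(k)=\mathfrak{gl}_m(k)$ the map $\phi$ is a $k$-algebra automorphism of the central simple $k$-algebra $\mathfrak{gl}_m(k)$: it is $k$-linear and sends $\mathrm{I}_m$ to $\mathrm{I}_m$, hence fixes the center $k\cdot\mathrm{I}_m$ pointwise. By the Skolem--Noether theorem every such automorphism is inner, so there exists $h\in\mathsf{GL}_m(k)$ with $\phi(X)=hXh^{-1}$ for all $X\in\mathfrak{gl}_m(k)$. Since $(\rho_1(\gamma),\rho_2(\gamma))\in\mathfrak{g}$ forces $\rho_2(\gamma)=\phi(\rho_1(\gamma))$, we obtain $\rho_2(\gamma)=h\rho_1(\gamma)h^{-1}$ for every $\gamma\in\Delta$, which is alternative (ii). The only input beyond elementary bookkeeping with ideals is the Skolem--Noether theorem (equivalently, the fact that every $k$-algebra automorphism of a matrix algebra over a field is inner); that is the step I regard as the genuine content, while the remainder is the standard Goursat argument transported from groups to associative algebras.
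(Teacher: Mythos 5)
Your proof is correct and takes essentially the same route as the paper's: observe $\mathfrak{g}$ is a unital subalgebra with surjective projections, show the kernel of each projection is forced to be $0$ or everything, and finish with Skolem--Noether. The only cosmetic difference is that you invoke simplicity of $\mathfrak{gl}_n(k)$ (and of $\mathfrak{g}$ once it is seen to be isomorphic to a matrix algebra) to kill the kernels, whereas the paper carries out the same step by hand, showing directly that the two-sided ideal generated by a nonzero $w$ contains all elementary matrices $E_{ij}$; your packaging is a shade more conceptual but the underlying argument is identical.
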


\begin{proof} We first observe that since $\Delta$ is a group, $\mathfrak{g}$ is a sub-algebra of $\mathfrak{gl}_m(k) \times \mathfrak{gl}_n(k)$. Let $\textup{pr}_1: \mathfrak{g} \rightarrow \mathfrak{gl}_{m}(k)$ and $\textup{pr}_2:\mathfrak{g} \rightarrow \mathfrak{gl}_{n}(k)$ be the projections on the first and second coordinate respectively. Let us assume that $\mathfrak{g}$ is a proper sub-algebra of $\mathfrak{gl}_{m}(k) \times \mathfrak{gl}_{n}(k)$. We are going to prove that $n=m$ and $\rho_1$, $\rho_2$ are conjugate. \par By assumption we have $\textup{pr}_1(\mathfrak{g})=\mathfrak{gl}_m(k)$ and $\textup{pr}_2(\mathfrak{g})=\mathfrak{gl}_{n}(k)$. Now we claim that the projection $\textup{pr}_1$ is injective. Suppose not. Then there exists $w \in \mathfrak{gl}_n(k)\smallsetminus \{0\}$ such that $(0,w)\in \mathfrak{g}$. Note that since $\mathfrak{g}$ is $(\rho_1\times \rho_2)(\Delta)$ invariant, $(0,\rho_2(\delta)w\rho_2(\gamma)) \in \mathfrak{g}$ for every $\gamma, \delta \in \Delta$. Since $\rho_2(\Delta)$ spans $\mathfrak{gl}_n(k)$, $\mathfrak{g}$ also contains the set $\mathcal{C}_w:=\big \{(0,w_1ww_2): w_1,w_2 \in \mathfrak{gl}_n(k)\big\}$. Moreover, note that there is no proper subspace of $\mathfrak{gl}_n(k)$ which contains the set $\mathcal{C}_w$, since the latter contains all the elementary $n\times n$ matrices $\{E_{ij}\}_{i,j=1}^{n}$. It follows that $\{0\}\times \mathfrak{gl}_n(k)$ is a subalgebra of $\mathfrak{g}$. In particular, $\big(\mathfrak{gl}_n(k) \times \{0\}\big) \cap \mathfrak{g}$ contains $\rho_1(\Delta)\times \{0\}$. Since $\rho_1$ is spanning, we similarly conclude that $\mathfrak{gl}_m(k)\times \{0\}$ is contained in $\mathfrak{g}$. Finally, we have $\mathfrak{g}=\mathfrak{gl}_m(k)\times \mathfrak{gl}_n(k)$ contradicting our assumption that $\mathfrak{g}$ is proper. Therefore, $\textup{pr}_1:\mathfrak{g}\rightarrow \mathfrak{gl}_m(k)$ is an algebra isomorphism. Similarly, we check that  $\textup{pr}_2:\mathfrak{g}\rightarrow \mathfrak{gl}_n(k)$ is an algebra isomorphism. \par It follows that $n=m$. By the Skolem--Noether theorem (e.g. see \cite[p.174]{Lor}) the automorphism $\textup{pr}_2\circ\textup{pr}_1^{-1}: \mathfrak{gl}_n(k) \rightarrow \mathfrak{gl}_n(k)$ is inner, i.e. there exists $h \in \mathsf{GL}_n(k)$ such that $\big(\textup{pr}_2 \circ \textup{pr}_{1}^{-1}\big)(g)=hgh^{-1}$ for every $g \in \mathfrak{gl}_n(k)$. In particular, since $\textup{pr}_{1}^{-1}(\rho_1(\gamma))=(\rho_1(\gamma),\rho_2(\gamma))$ for $\gamma \in \Delta$, we conclude that $\rho_2(\gamma)=h\rho_1(\gamma)h^{-1}$ for every $\gamma \in \Delta$.\end{proof}

\section{Proof of Theorem \ref{mainthm2}} \label{proofs1}
In this section we prove Theorem \ref{mainthm2}. In order to simplify our notation in the following statements, we introduce the following definition.

\begin{definition} Let $k$ be a local field and $\mathsf{H}$ be a group. A representation $\rho:\mathsf{H}\times \mathsf{H} \rightarrow \mathsf{GL}_{d}(k)$ is called $r$-normal if there exist $C,a>0$ such that \begin{equation} \label{reg} \Big|\Big| \rho \big(1, [w_1, w_2,\ldots, w_r]^{\pm 1} \big)\Big|\Big| \leq Ce^{ a\sum_{i=1}^{r}||\mu(\rho(w_i,w_i))||_{\mathbb{E}}}\end{equation} for every $w_1,\ldots,w_r \in \mathsf{H}$.\end{definition}

We will need the following lemma providing a sharper estimate from that of Theorem \ref{mainthm2} in the case where the representation $\rho$ is semisimple.

\begin{lemma} \label{mainthm-semisimple0} Let $\mathsf{H}$ be a group and $\rho:\mathsf{H} \times \mathsf{H}\rightarrow \mathsf{GL}_d(k)$ a semisimple representation. There exist $C,c>0$ such that for every $w \in [\mathsf{H},\mathsf{H}]$ we have \begin{equation}\label{semisimple-1} \Big|\Big| \mu \big(\rho(1, w)\big)\Big|\Big|_{\mathbb{E}} \leq C\Big|\Big| \mu \big(\rho(w, w)\big)\Big|\Big|_{\mathbb{E}} +c.\end{equation} \end{lemma}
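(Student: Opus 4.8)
The plan is to reduce to the case where the restriction $\rho|_{\{1\}\times\mathsf{H}}$ contains a $1$-proximal element and then exploit Lemma \ref{restriction}. First I would decompose $\rho = \bigoplus_{\ell} \rho_\ell$ into irreducibles; since the norm of the Cartan projection of a block-diagonal representation is comparable (up to a multiplicative constant depending on $d$) to the maximum of the norms of the Cartan projections of the blocks, it suffices to prove \eqref{semisimple-1} for each irreducible summand. So I assume $\rho$ is irreducible, and by Fact \ref{normal} both restrictions $\rho|_{\{1\}\times\mathsf{H}}$ and $\rho|_{\mathsf{H}\times\{1\}}$ are semisimple. Applying Lemma \ref{subalgebra} to $\rho_1 := \rho|_{\{1\}\times\mathsf{H}}$ and $\rho_2 := \rho|_{\mathsf{H}\times\{1\}}$ (after further reducing to the spanning case on each factor via the density of the generated subalgebra), either the two factor-restrictions span all of $\mathfrak{gl}_m(k)\times\mathfrak{gl}_n(k)$, or they are conjugate.

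In the conjugate case $\rho(1,w) = h\,\rho(w,1)h^{-1}$, so $\mu(\rho(1,w))$ and $\mu(\rho(w,1))$ differ by a bounded amount ($\|\mu(g h g')\| \le \|\mu(g)\| + \text{const}$ for $g,g'$ in a fixed compact, applied with the conjugation); combined with $\rho(w,w) = \rho(w,1)\rho(1,w)$ and subadditivity of $\mu$ under products, this gives $\|\mu(\rho(1,w))\| \le \|\mu(\rho(w,w))\| + c$ directly, even without using $w\in[\mathsf{H},\mathsf{H}]$. In the spanning case, I claim $\rho|_{\{1\}\times\mathsf{H}}$ contains a $1$-proximal element: the image, being a spanning (hence Zariski-dense-in-$\mathsf{GL}$) subgroup, contains a proximal element by the Abels--Margulis--Soifer/Benoist machinery, or more directly one invokes that a strongly irreducible, proximal-able situation holds — this is where I would cite the proximality results underlying Theorem \ref{finitesubset}. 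Then Lemma \ref{restriction} produces characters $\varepsilon_1,\dots,\varepsilon_p:\mathsf{H}\to k^\ast$ such that $\rho(h,1)$ scales the attracting eigenline $v_g^+$ of any proximal $\rho(1,g)$ by some $\varepsilon_r(h)$; since such eigenlines span $k^m$ (irreducibility), $\rho(\mathsf{H}\times\{1\})$ is simultaneously diagonalizable with eigenvalues among $\{\varepsilon_r\}$. For $w=[w_1,w_2]\cdots$ a product of commutators in $\mathsf{H}$, each $\varepsilon_r(w) = 1$, hence $\rho(w,1) = \mathrm{Id}$, so $\rho(1,w) = \rho(w,w)$ and \eqref{semisimple-1} is immediate with $C=1$, $c=0$.

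To assemble the general (block-diagonal) statement, I would take $C$ and $c$ to be the maxima of the per-block constants, absorbing the comparison constant between $\|\mu\|$ of the full representation and of its blocks. The main obstacle I anticipate is the spanning-case claim that $\rho|_{\{1\}\times\mathsf{H}}$ contains a $1$-proximal element and that its attracting lines span: one must be careful that "spanning the matrix algebra" is used correctly to force (strong) irreducibility and proximality of the restriction to the normal factor — I would isolate this as a short sub-lemma, using that a subalgebra of $\mathfrak{gl}_m(k)$ equal to the whole algebra forces the generating group to act irreducibly, and then invoking the existence of proximal elements in irreducible subgroups whose Zariski closure is reductive with the relevant highest-weight non-degeneracy (Benoist \cite{benoist-limitcone}, Abels--Margulis--Soifer \cite{AMS}). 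Everything else is bookkeeping with subadditivity of the Cartan projection and the fact that conjugation and multiplication by fixed elements perturb $\mu$ boundedly.
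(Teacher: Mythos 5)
Your plan has the right ingredients (Lemma \ref{restriction}, characters trivial on commutators, a decomposition into pieces, an appeal to proximality/Benoist) but several of the steps, as written, do not work, and the actual mechanism of the paper's proof is different from yours in important ways.

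First, the reduction via Lemma \ref{subalgebra} is misapplied. You set $\rho_1:=\rho|_{\{1\}\times\mathsf{H}}$ and $\rho_2:=\rho|_{\mathsf{H}\times\{1\}}$ and want to apply Lemma \ref{subalgebra} ``after further reducing to the spanning case on each factor.'' But $\rho_1$ and $\rho_2$ commute, so if both span $\mathfrak{gl}_d(k)$ then $\mathfrak{gl}_d(k)$ is abelian and $d=1$. More tellingly, for a generic irreducible representation of a product (over $\overline{k}$ it is an external tensor product $\sigma_1\boxtimes\sigma_2$), the restriction $\rho|_{\{1\}\times\mathsf{H}}$ is a direct sum of $\dim\sigma_1$ copies of $\sigma_2$, which is certainly not spanning. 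The ``further reducing'' you envision is exactly the nontrivial content: one has to handle the fact that the attracting eigenlines of $1$-proximal elements of $\rho(\{1\}\times\mathsf{H})$ need not span $k^d$, and the paper does so by working instead with the exterior powers $\wedge^i\rho$ and feeding Theorem \ref{finitesubset} into a quantitative estimate. Your proposal never estimates anything when the eigenlines fail to span.

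Second, the ``conjugate case'' argument is inverted. From $\rho(w,w)=\rho(w,1)\rho(1,w)$ and subadditivity of $\mu$ one only gets the \emph{upper} bound $\|\mu(\rho(w,w))\|\le 2\|\mu(\rho(1,w))\|+O(1)$, never the \emph{lower} bound $\|\mu(\rho(w,w))\|\gtrsim\|\mu(\rho(1,w))\|$ that the lemma requires. Indeed, nothing in subadditivity prevents cancellation between $\rho(w,1)$ and $\rho(1,w)$: if $\rho(1,w)=\operatorname{diag}(t,t^{-1})$ and $h$ is the swap then $h^{-1}\rho(1,w)h\cdot\rho(1,w)=I$ while $\|\mu(\rho(1,w))\|$ is arbitrarily large. (This particular example is not irreducible, but the point is that subadditivity alone cannot rule such cancellation out; the hypothesis $w\in[\mathsf{H},\mathsf{H}]$ must enter, which your conjugate-case argument explicitly drops.)

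Third, the actual proof has two further ingredients you do not reproduce: the self-dual trick (replacing $\rho$ by $\rho\times\rho^\ast$ so that eigenvalue moduli come in pairs $\ell_i,\ell_{d-i+1}^{-1}$), which is what guarantees the existence of some $\wedge^i\rho(1,wf)$ that is $1$-proximal with $i\le d/2$; and the quantitative use of Theorem \ref{finitesubset} to convert singular values of $\rho(1,w)$ into eigenvalues of $\rho(1,wf)$ for some $f$ ranging over a fixed finite set. You cite the Abels--Margulis--Soifer/Benoist machinery, but only to assert existence of a proximal element in the standard representation, whereas the proof needs the full strength of Theorem \ref{finitesubset} to compare $\sigma_1/\sigma_d$ with $\ell_1/\ell_d$ uniformly. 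Without that step, the estimate $\|\mu(\rho(1,w))\|\le C\|\mu(\rho(w,w))\|+c$ cannot be closed. As a result, the only case your argument fully handles is when $\rho(\mathsf{H}\times\{1\})$ is scalar, which is the degenerate (and easy) case.
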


\begin{proof} Let $\rho^{\ast}(w,w')=\rho\big((w,w')^{-1}\big)^{t}, (w,w') \in \mathsf{H} \times \mathsf{H}$, be the dual representation of $\rho$. We may assume that the representation $\rho$ is conjugate to $\rho^{\ast}$ and $d \in \mathbb{N}$ is even, otherwise we may replace $\rho$ with the representation $\rho\times \rho^{\ast}$ and prove the statement for this representation. In particular, \begin{equation} \label{eigenvalues} \ell_i\big(\rho(w,w')\big)=\ell_i\big(\rho(w,w')^{-1}\big)=\ell_{d-i+1}\big(\rho(w,w')\big)^{-1} \end{equation} for every $(w,w') \in \mathsf{H} \times \mathsf{H}$ and $1 \leq i \leq \frac{d}{2}$. Since $\{1\} \times \mathsf{H}$ is a normal subgroup of $\mathsf{H}\times \mathsf{H}$ and $\rho$ is semisimple, Fact \ref{normal} implies that $(\wedge^s \rho)|_{\{1\} \times \mathsf{H}}$ is also a semisimple representation for every $1 \leq s \leq \frac{d}{2}$. Lemma \ref{restriction} shows that there exist finitely many group homomorphisms $\varepsilon_1,\ldots,\varepsilon_{\ell}:\mathsf{H} \rightarrow k^{\ast}$ with the following property: if $g \in \mathsf{H}$ and $\wedge^i \rho(1,g)$ is $1$-proximal and $v_{g}^{+}\in \wedge^i k^d$ is an eigenvector of $\wedge^i \rho(1,g)$ with respect to the eigenvalue of maximum modulus, there exists $1 \leq r \leq \ell$ such that \begin{equation}\label{proximal-eq*}\wedge^i \rho(h,1)v_{g}^{+}=\varepsilon_r(h)v_{g}^{+}\end{equation} for every $h \in \mathsf{H}$. By Theorem \ref{finitesubset} there exists a finite subset $F$ of $\{1\}\times \mathsf{H}$ and $C_{\rho}>0$ with the property: for every $w \in \mathsf{H}$ there exists $(1,f)\in F$ such that \begin{equation} \label{finiteeq}\max_{1 \leq i \leq d} \Big|\log \ell_i\big(\rho(1,wf)\big)-\log \sigma_i \big(\rho(1,w)\big) \Big| \leq C_{\rho}. \end{equation} Let us set $R:=C_{\rho}+\max_{g \in F}\big|\textup{det}(\rho(1,g))\big|$. Fix now $w \in [\mathsf{H},\mathsf{H}]$, observe that $\big|\textup{det}(\rho(1,w))\big|=1$, and choose $f \in \mathsf{H}$ such that $f,w \in \mathsf{H}$ satisfy (\ref{finiteeq}). If $\sum_{i=1}^{d} \big|\log \sigma_i(\rho(1,w))\big|<10Rd$ then (\ref{semisimple-1}) holds true for $c=10Rd$. Thus, we continue by assuming that $\sum_{i=1}^{d} |\log \sigma_i(\rho(1,w))|\geq 10Rd$. Then (\ref{finiteeq}) implies that $\sum_{i=1}^{d} \big|\log \ell_i ( \rho(1,wf))\big| \geq 9Rd$. By the choice of $R>0$ and the fact that $\sum_{i=1}^{d} \log \ell_i(\rho(1,wf))=\log \big| \textup{det} \rho(1,f)\big|$, we may choose $1 \leq i \leq d-1$ such that $\ell_i(\rho(1,wf))>\ell_{i+1}(\rho(1,wf))$ (otherwise we would have $\log \ell_i(\rho(1,wf))=\frac{1}{d}\log \big|\textup{det}\rho(1,f)\big|$ for every $i$, which cannot happen since $\sum_{i=1}^{d}|\log\ell_i(\rho(1,wf))|\geq 9Rd$). Therefore, $\wedge^{i}\rho(1,wf)$ is $1$-proximal and we may assume by (\ref{eigenvalues}) that $1 \leq i \leq \frac{d}{2}$. Let $v_{wf}^{+} \in \wedge^i k^d$ be a unit eigenvector of $\wedge^{i}\rho(1, wf)$ corresponding to its eigenvalue of maximum modulus. By (\ref{proximal-eq*}) we may choose $1 \leq r \leq \ell$ and write $$\wedge^i \rho\big(wf,1\big)v_{wf}^{+}=\varepsilon_r \big(wf\big)v_{wf}^{+}=\varepsilon_r(f)v_{wf}^{+},$$ since $f \in N$, $w \in [\mathsf{H},\mathsf{H}]$ and $[\mathsf{H},\mathsf{H}]\subset \textup{ker}\varepsilon_r$. We deduce that \begin{align*} \Big| \Big|\wedge^i \rho \big(wf,wf\big)\Big|\Big|& \geq \Big| \Big|\wedge^i \rho\big(wf,wf\big) v_{wf}^{+}\Big|\Big|\\ &=\Big| \Big | \wedge^i \rho(wf,1) \wedge^i \rho(1,wf) v_{wf}^{+} \Big|\Big|\\ &=\big|\varepsilon_r(f)\big|\ell_1\big(\wedge^i \rho\big(1,wf\big) \big)\\ &=\big|\varepsilon_r(f)\big| \Big({\ell_1\big(\wedge^i \rho\big(1,wf\big) \big) \ell_1\big(\wedge^i \rho\big(1,(wf)^{-1}\big) \big)}\Big)^{\frac{1}{2}}\\ &=\big|\varepsilon_r(f)\big| \Biggr({\prod_{j=1}^{i}\frac{\ell_j\big(\rho(1,wf)\big)}{\ell_{d-j+1}\big(\rho(1,wf)\big)}}\Biggr)^{\frac{1}{2}}\\ & \geq \Big(\min_{1 \leq q \leq \ell}\min_{(1,g)\in F} \big|\varepsilon_q(g)\big|\Big)  \Biggr({\frac{\ell_1\big(\rho(1,wf)\big)}{\ell_d\big(\rho(1,wf)\big)}}\Biggr)^{\frac{1}{2}}\\ &\geq e^{-C} \Big(\min_{1 \leq q \leq \ell}\min_{(1,g)\in F} \big|\varepsilon_q(g)\big|\Big)  \Biggr({\frac{\sigma_1\big(\rho(1,w)\big)}{\sigma_d\big(\rho(1,w)\big)}}\Biggr)^{\frac{1}{2}}. \end{align*} \par We conclude that there exist constants $M_1,M_2,M_3>100Rd$, depending only on the representation $\rho$, $d\in \mathbb{N}$, the finite set $F\subset \{1\} \times \mathsf{H}$ and the the constant $\min_{1 \leq q \leq \ell}\min_{(1,g)\in F} \big|\varepsilon_q(g)\big|$, such that: $$\log \frac{\sigma_1 \big(\rho(1,w)\big)}{\sigma_d \big(\rho(1,w)\big)} \leq 2\max_{1\leq i \leq \frac{d}{2}}\log \Big|\Big| \wedge^i \rho \big(wf,wf\big)\Big|\Big|+M_1\leq M_3 \log \frac{\sigma_1(\rho(w,w))}{\sigma_d(\rho(w,w))}+M_2$$ for every $w \in [\mathsf{H},\mathsf{H}]$. The proof of the lemma is complete. \end{proof}

\subsection{Proof of Theorem \ref{mainthm2}.} Let us now recall Theorem \ref{mainthm2} which states that every linear representation of a direct product of the form $\mathsf{H}\times \mathsf{H}$ is $q$-normal for some $q \in \mathbb{N}$:

\begin{theorem} \textup{(}Theorem \ref{mainthm2}\textup{)}\label{mainthm-general}  Let $k$ be a local field and $\mathsf{H}$ be a group. For every representation \hbox{$\rho:\mathsf{H} \times \mathsf{H} \rightarrow \mathsf{GL}_d(k)$} there exists $C>1$, depending only on $\rho$, with the property: for every $w_1,w_2,\ldots ,w_{r} \in \mathsf{H}\smallsetminus\{1\}$ and $r \geq d+1$ we have that $$\Big| \Big| \mu \big(\rho \big(1, [w_1,w_2,\ldots, w_{r}]\big) \big)\Big|\Big|_{\mathbb{E}} \leq 2^r C\Big(1+\sum_{i=1}^{r}\Big|\Big|\mu\big(\rho(w_i,w_i)\big)\Big|\Big|_{\mathbb{E}}\Big).$$ \end{theorem}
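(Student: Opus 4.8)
The plan is to reduce the theorem to a multiplicative estimate and to prove that estimate by induction on the length of a $\rho$-invariant flag. Call $\rho$ \emph{$q$-normal} if there are $C,a>0$ with $\big\|\rho\big(1,[w_1,\dots,w_q]^{\pm 1}\big)\big\|\leq C\,e^{\,a\sum_{i=1}^{q}\|\mu(\rho(w_i,w_i))\|_{\mathbb{E}}}$ for all $w_1,\dots,w_q\in\mathsf H$. I would show that every representation $\rho\colon\mathsf H\times\mathsf H\to\mathsf{GL}_d(k)$ is $(d+1)$-normal, and then deduce the theorem. For the deduction, combine the elementary inequality $\|\mu(g)\|_{\mathbb{E}}\leq\sqrt d\,\big(\log^{+}\|g\|+\log^{+}\|g^{-1}\|\big)$ with $(d+1)$-normality (the bound for the inverse being built into the definition of $q$-normal): this gives the claim at once for $r=d+1$, and for $r>d+1$ one writes $[w_1,\dots,w_r]=\big[[w_1,\dots,w_{r-d}],w_{r-d+1},\dots,w_r\big]$, applies $(d+1)$-normality to these $d+1$ elements, controls $\|\rho(v,v)\|\leq\|\rho(v,1)\|\,\|\rho(1,v)\|$ for $v=[w_1,\dots,w_{r-d}]$ (using also $(d+1)$-normality of the representation $(a,b)\mapsto\rho(b,a)$, which governs $\|\rho(v,1)\|$), and recurses on $r$. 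The constants grow geometrically along this recursion, which is precisely the origin of the factor $2^{r}$.

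\textbf{Reduction to a parabolic and the semisimple case.} After conjugating I may assume the image of $\rho$ is block upper triangular with irreducible diagonal blocks $\rho_1,\dots,\rho_s$, $s\leq d$. Then $\rho_1\oplus\cdots\oplus\rho_s$ is a semisimple representation of $\mathsf H\times\mathsf H$, each $\rho_a$ is again a representation of $\mathsf H\times\mathsf H$, and, since $\rho(w,w)$ lies in the same parabolic, $\|\mu(\rho_a(w,w))\|_{\mathbb{E}}\leq\sqrt d\,\|\mu(\rho(w,w))\|_{\mathbb{E}}+O(1)$, because the diagonal blocks of an upper triangular matrix and of its inverse have operator norm at most that of the matrix itself. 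If $s=1$, or more generally if $\rho$ is semisimple, then $v:=[w_1,\dots,w_{d+1}]$ lies in $[\mathsf H,\mathsf H]$, so Lemma~\ref{mainthm-semisimple0} gives $\|\mu(\rho(1,v))\|_{\mathbb{E}}\leq C\|\mu(\rho(v,v))\|_{\mathbb{E}}+c$; expanding $\rho(v,v)=[\rho(w_1,w_1),\dots,\rho(w_{d+1},w_{d+1})]$ as a word of length at most $3\cdot 2^{d}$ in the $\rho(w_i,w_i)^{\pm1}$ and using submultiplicativity of the norm then bounds the right-hand side by a constant times $2^{d+1}\sum_i\|\mu(\rho(w_i,w_i))\|_{\mathbb{E}}$, which is even stronger than $(d+1)$-normality.

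\textbf{The inductive step.} Now suppose $s\geq 2$ and that $(d+1)$-normality holds for all representations with fewer than $s$ diagonal blocks. Peel off the last block: $\rho$ restricted to the first $s-1$ blocks is a block upper triangular representation $\rho'$, the last block is $\rho_s$, and the upper-right block $\phi$ satisfies the cocycle identity $\phi(gh)=\rho'(g)\phi(h)+\phi(g)\rho_s(h)$. For $v=[w_1,\dots,w_{d+1}]$ the two diagonal pieces of $\rho(1,v)$ are handled by the inductive hypothesis applied to $\rho'$ and by the semisimple case applied to $\rho_s$, so the entire problem is to bound the off-diagonal block $\phi(1,v)$. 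Passing to the genuine $1$-cocycle $\widetilde\phi(g)=\phi(g)\rho_s(g)^{-1}$ with values in the twisted module $W=\mathrm{Hom}(k^{d_s},k^{\,d-d_s})$, one has $\phi(1,v)=\widetilde\phi(1,v)\,\rho_s(1,v)$, so it is enough to bound $\|\widetilde\phi(1,v)\|_{W}$. Here I would expand $\widetilde\phi$ on a nested commutator by the standard cocycle commutator identity and use crucially that $\mathsf H\times\{1\}$ centralises $\{1\}\times\mathsf H$: by Lemma~\ref{subalgebra} the pair of restrictions is either jointly spanning on the relevant summand — so the extension class is trivial there and $\widetilde\phi$ is a homomorphism, hence vanishes on $[\mathsf H,\mathsf H]$ — or $\rho'$ and $\rho_s$ are conjugate along one of the two factors, in which case Lemma~\ref{restriction} (applied after replacing $\rho$ by $\rho\times\rho^{\ast}$ and passing to the appropriate exterior powers, exactly as in the proof of Lemma~\ref{mainthm-semisimple0}) shows that $\mathsf H\times\{1\}$, respectively $\{1\}\times\mathsf H$, acts by characters on the attracting eigenlines occurring in $W$; such characters are trivial on commutators. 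Each of the at most $s-1<d+1$ flag steps absorbs one level of the commutator in this way, so once $v$ is a commutator of depth $d+1$ only the controlled data $\rho(w_i,w_i)$ survives and one gets $\|\widetilde\phi(1,v)\|_{W}\leq C\,e^{\,a\sum_i\|\mu(\rho(w_i,w_i))\|_{\mathbb{E}}}$, closing the induction.

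\textbf{Main obstacle.} The difficulty is concentrated in this inductive step. A naive expansion of $\phi(1,[w_1,\dots,w_{d+1}])$ reintroduces quantities like $\|\rho(1,w_i)\|$, and for a single generator there is in general \emph{no} bound for $\|\mu(\rho(1,w_i))\|_{\mathbb{E}}$ in terms of $\|\mu(\rho(w_i,w_i))\|_{\mathbb{E}}$ (this already fails for suitable $2\times 2$ upper triangular representations, which is exactly why one cannot take $r\leq d$). The force of the argument is that the commuting-factor structure, via Lemmas~\ref{subalgebra} and~\ref{restriction}, confines these uncontrolled quantities to characters which vanish once the depth of the commutator exceeds the length of the $\rho$-invariant flag.
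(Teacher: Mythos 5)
Your high-level strategy — reduce to a block upper triangular form, prove $2$-normality of irreducible blocks via Lemma \ref{mainthm-semisimple0}, and absorb one commutator layer per step of the $\rho$-invariant flag, invoking Lemma \ref{subalgebra} to split the analysis of each extension into a ``spanning'' case and a ``conjugate'' case — is exactly the architecture of the paper's proof (Lemma \ref{control} is the inductive engine, Lemma \ref{reg} the bootstrapping). The ``Main obstacle'' paragraph correctly identifies the heart of the matter. However, the inductive step as you have written it contains a genuine gap.

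The problematic claim is that in the jointly spanning case of Lemma \ref{subalgebra}, ``the extension class is trivial there and $\widetilde\phi$ is a homomorphism, hence vanishes on $[\mathsf H,\mathsf H]$.'' Neither assertion is correct. First, joint spanning of $(\rho_1,\rho_s)$ does \emph{not} force the extension group $H^1\!\big(\mathsf H\times\mathsf H,\ \mathrm{Hom}(\rho_s,\rho_1)\big)$ to vanish: take $\mathsf H=F_2$, $\rho_1(g,h)=\chi(h)$ for a nontrivial character $\chi$, and $\rho_s=\mathbf 1$. The pair is jointly spanning (the vectors $(\chi(h),1)$ already span $k\times k$), yet $W\cong\chi$ has $H^1(F_2\times F_2,W)\neq 0$; a nontrivial class is given by $\widetilde\phi(g,h)=\psi(h)$ where $\psi$ is any $\chi$-cocycle on $F_2$ that is not a coboundary. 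Second, even if the class \emph{were} trivial, $\widetilde\phi$ would be a \emph{coboundary} $g\mapsto g\cdot X - X$, not a homomorphism; a cocycle is a homomorphism only when the $\mathsf H\times\mathsf H$-action on $W$ is trivial, which, for blocks that are not proportional, is the opposite of the spanning situation. In particular $\widetilde\phi(1,[w_1,w_2])$ need not be zero in the spanning case, and your chain of reasoning that it must vanish on $[\mathsf H,\mathsf H]$ collapses.

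What the paper actually proves in this case (Case 1a of Lemma \ref{control}) is the weaker but sufficient fact that $u(1,w)$ is \emph{controlled}, not zero, for a single commutator $w=[w_1,w_2]$. The mechanism is elementary linear algebra, not a vanishing theorem: using spanning, one chooses $c_i\in k$ and $(a_i,b_i)\in\mathsf H\times\mathsf H$ with $\sum_i c_i\rho_1(a_i,b_i)=\mathrm{I}$ and $\sum_i c_i\rho_2(a_i,b_i)=0$; then the matrix $\sum_i c_i\,\rho(wa_i,b_iw)$, which is controlled by $\|\mu(\rho(w,w))\|_{\mathbb E}$ via Observation \ref{Obs1}, has $u(1,w)$ as an explicit linear combination of its blocks together with the $2$-normal quantities $\rho_1(w,1)^{-1}$, $\rho_2(1,w)$ and the fixed matrices $u(a_i,b_i)$. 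This shows $\rho$ is already $2$-normal in the spanning case, with no extra commutator layer needed. Your conjugate case is closer in spirit to the paper's Case 1b: there $u(1,w)=\lambda(w)\rho_1(1,w)+\mathcal D'_w$ with $\mathcal D'_w$ controlled and $\lambda(w)\in k$ a genuine obstruction, which is killed by taking \emph{one more} commutator $[w,w_{r+1}]$ — that is where the ``one extension, one commutator'' count comes from. But as written your description of this case is too vague to check, and the application of Lemma \ref{restriction} to the module $W$ (which is a priori neither semisimple nor equipped with proximal dynamics) is not justified. Finally, a small issue in the deduction from $(d+1)$-normality: the recursion ``peel off the last $d$ letters'' does not terminate cleanly for $d+1< r< 2d+1$; the paper's Lemma \ref{reg} (peel off one letter at a time, using the identity $(1,z_m^{-1})(w_{m+1}^{-1},w_{m+1}^{-1})(1,z_m)(w_{m+1},w_{m+1})=(1,[z_m,w_{m+1}])$ so the middle factors carry controlled diagonal data) avoids this.
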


For the proof we need the following elementary fact.

\begin{lemma} \label{reg} Let $\mathsf{H}$ be a group and suppose that $\rho:\mathsf{H} \times \mathsf{H}\rightarrow \mathsf{GL}_d(k)$ is a representation which is $r$-normal. Then $\rho$ is $m$-normal for every $m \geq r$. In particular, there exist $\mathcal{K},a>1$, depending only on $\rho$, such that $$\Big| \Big| \rho \big(1,\big[w_1,\ldots, w_m]^{\pm1}\big)\Big|\Big| \leq \mathcal{K}^{2^{m-r}}e^{2^{m-r} a\sum_{i=1}^{r}||\mu(\rho(w_i,w_i))||_{\mathbb{E}}}$$ for every $w_1,\ldots,w_m \in \mathsf{H}$.
\end{lemma}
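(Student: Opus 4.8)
The plan is to induct on $m\geq r$, peeling the last letter off $[w_1,\ldots,w_m]=[[w_1,\ldots,w_{m-1}],w_m]$ and using a conjugation identity in $\mathsf{H}\times\mathsf{H}$ to trade the uncontrolled factor $\rho(1,w_m)$ for the diagonal factor $\rho(w_m,w_m)$, whose Cartan projection is precisely the quantity that is allowed to appear on the right-hand side of the $r$-normality estimate.

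First I would record the algebraic identity at the heart of the argument: for $u,w\in\mathsf{H}$ one has, inside $\mathsf{H}\times\mathsf{H}$, the equalities $(1,[u,w])=(1,u)^{-1}(w,w)^{-1}(1,u)(w,w)$ and $(1,[u,w]^{-1})=(w,w)^{-1}(1,u)^{-1}(w,w)(1,u)$, since $(w,w)^{-1}(1,u)(w,w)=(1,w^{-1}uw)$ while $[u,w]=u^{-1}(w^{-1}uw)$ and $[u,w]^{-1}=[w,u]=(w^{-1}u^{-1}w)u$. Applying $\rho$ and using submultiplicativity of $||\cdot||$ gives, in either sign,
$$\big|\big|\rho\big(1,[u,w]^{\pm1}\big)\big|\big|\ \leq\ \big|\big|\rho(1,u)\big|\big|\,\big|\big|\rho(1,u)^{-1}\big|\big|\,\big|\big|\rho(w,w)\big|\big|\,\big|\big|\rho(w,w)^{-1}\big|\big|.$$
I would pair this with the elementary estimate that there is a dimensional constant $\kappa=\kappa(d)\geq 1$ with $||g||\leq\kappa\,e^{||\mu(g)||_{\mathbb{E}}}$ and $||g^{-1}||\leq\kappa\,e^{||\mu(g)||_{\mathbb{E}}}$ for all $g\in\mathsf{GL}_d(k)$: indeed $||g||\leq\kappa_d\,\sigma_1(g)=\kappa_d\,e^{(\mu(g))_1}\leq\kappa_d\,e^{||\mu(g)||_{\mathbb{E}}}$ ($K$ distorting $||\cdot||$ by at most a dimensional factor), and likewise $||g^{-1}||\leq\kappa_d\,\sigma_d(g)^{-1}=\kappa_d\,e^{(\mu(g^{-1}))_1}$ together with $||\mu(g^{-1})||_{\mathbb{E}}=||\mu(g)||_{\mathbb{E}}$. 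This controls the two factors involving $w$ by $||\rho(w,w)^{\pm1}||\leq\kappa\,e^{||\mu(\rho(w,w))||_{\mathbb{E}}}$.

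Writing $B_j:=\max_{\varepsilon=\pm1}||\rho(1,[w_1,\ldots,w_j]^{\varepsilon})||$ and taking $u=[w_1,\ldots,w_{m-1}]\in\mathsf{H}$, $w=w_m$, the two displays above yield the recursion $B_m\leq\kappa^2\,B_{m-1}^{2}\,e^{2||\mu(\rho(w_m,w_m))||_{\mathbb{E}}}$ for $m\geq r+1$ (using $\rho(1,u^{-1})=\rho(1,u)^{-1}$). The base case $m=r$ is exactly the definition of $r$-normality, $B_r\leq C\,e^{a\sum_{i=1}^r||\mu(\rho(w_i,w_i))||_{\mathbb{E}}}$, for constants $C,a$ which I may assume $\geq 1$. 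Setting $\mathcal{K}:=C\kappa^2+2>1$, a direct induction — carrying the hypothesis in the normalized form $B_m\leq\mathcal{K}^{2^{m-r}}\kappa^{-2}\,e^{2^{m-r}a\sum_{i=1}^m||\mu(\rho(w_i,w_i))||_{\mathbb{E}}}$, so that the $\kappa^2$ produced by the recursion is exactly absorbed, and using $2\leq a\cdot 2^{m-r}$ to fold the new term $2||\mu(\rho(w_m,w_m))||_{\mathbb{E}}$ into the exponent — gives
$$\big|\big|\rho\big(1,[w_1,\ldots,w_m]^{\pm1}\big)\big|\big|\ \leq\ \mathcal{K}^{2^{m-r}}\,e^{\,2^{m-r}a\sum_{i=1}^m||\mu(\rho(w_i,w_i))||_{\mathbb{E}}}$$
for all $m\geq r$, which is the asserted estimate and in particular shows that $\rho$ is $m$-normal with constants $\mathcal{K}^{2^{m-r}}$ and $2^{m-r}a$. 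The conjugation identity and the elementary norm bound are immediate; the only point requiring genuine care is the bookkeeping of constants in the induction, where obtaining the clean bound $\mathcal{K}^{2^{m-r}}$ rather than a cruder iterated exponential forces the particular normalization of the inductive hypothesis above — but this is only an inequality to carry out, not a real obstacle.
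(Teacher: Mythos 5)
Your proof is correct and takes essentially the same route as the paper: peel off $w_m$, use the conjugation identity $(1,[u,w]^{\pm1})$ expressed as a product of $(1,u)^{\pm1}$ and $(w,w)^{\pm1}$, apply submultiplicativity of $||\cdot||$, and control the diagonal factors by the elementary bound $||g^{\pm1}||\leq \kappa\, e^{||\mu(g)||_{\mathbb{E}}}$; the paper's induction carries a constant $C^{-2+6\cdot 2^{m-r}}$ while you normalize to $\mathcal{K}^{2^{m-r}}\kappa^{-2}$, but these are the same argument with different bookkeeping. (Note also that your final exponent correctly sums over $i=1,\ldots,m$, as the paper's own inductive step does; the upper limit $r$ printed in the lemma's display is a typo.)
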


\begin{proof} Let us observe that there exist constants $C,a>1$, depending only on $d\in \mathbb{N}$, such that $$\Big|\Big|\rho \big(g,g\big)\Big|\Big|\leq Ce^{a||\mu(\rho(g,g)||_{\mathbb{E}}}$$ for every $g \in \Gamma$. Since $\rho$ is $r$-normal, by enlarging $C,a>1$ if necessary, we may assume that $$\Big | \Big| \rho \big(1, [w_1,\ldots,w_r]^{\pm1}\big)\Big | \Big| \leq Ce^{a \sum_{i=1}^{r}||\mu(\rho(w_i,w_i))||_{\mathbb{E}}},$$ for every $w_1,\ldots,w_r \in \mathsf{H}$. Now we may use induction to prove \begin{equation} \label{induction-ineq} \Big| \Big| \rho \big(1,\big[w_1,\ldots, w_m]^{\pm1}\big)\Big|\Big| \leq C^{-2+6\cdot 2^{m-r}}e^{2^{m-r} a\sum_{i=1}^{r}||\mu(\rho(w_i,w_i))||_{\mathbb{E}}}\end{equation} for every $w_1,\ldots,w_m \in \mathsf{H}$, $m\geq r$. Obviously (\ref{induction-ineq}) holds for $m=r$. Assume now that (\ref{induction-ineq}) holds for $m\geq r$ and let $w_1,\ldots,w_m,w_{m+1} \in \mathsf{H}$. We set $z_m:=[w_1,\ldots,w_m]$ and observe that \begin{align*} \Big | \Big| \rho(1,\big[z_m,w_{m+1}\big]\big) \Big| \Big| &=\Big | \Big| \rho \big(1,z_m^{-1}\big)\rho \big(w_{m+1}^{-1},w_{m+1}^{-1}\big)\rho \big(1,z_m\big)\rho \big(w_{m+1},w_{m+1}\big)\Big | \Big|\\ & \leq \Big | \Big|\rho \big(1,z_m^{-1}\big)\Big | \Big|\cdot \Big | \Big|\rho \big(w_{m+1}^{-1},w_{m+1}^{-1}\big)\Big | \Big| \cdot \Big | \Big|\rho \big(1,z_m\big)\Big | \Big|\cdot \Big | \Big|\rho \big(w_{m+1},w_{m+1}\big)\Big | \Big| \\ &\leq C^{-4+12\cdot 2^{m-r}} e^{2^{m-r+1}a \sum_{i=1}^{m}||\mu(\rho(w_i,w_i))||_{\mathbb{E}}}\cdot C^2 e^{2a||\mu(\rho(w_{m+1},w_{m+1}))||_{\mathbb{E}}}\\ & \leq C^{-2+6\cdot 2^{m+1-r}}e^{2^{m+1-r}a\sum_{i=1}^{m+1}||\mu(\rho(w_i,w_i))||_{\mathbb{E}}}. \end{align*} We similarly check the same bound holds for $\big | \big|\rho\big(1,[w_{m+1},z_m]\big)\big|\big|$. This completes the proof of the induction and the conclusion follows. \end{proof}

The proof Theorem \ref{mainthm-general} is based on the following key lemma.

\begin{lemma} \label{control} Let $\mathsf{H}$ be a group and $\rho:\mathsf{H} \times \mathsf{H} \rightarrow \mathsf{GL}_d(k)$ be a representation of the form: 

\begin{equation*} \label{rep} \rho \big(g,g'\big)=\begin{pmatrix}[1.3]
\rho_1(g,g') & \ldots & \ast \\ 
 0&\ddots   &\vdots  \\ 
 0&0  & \rho_{r}(g,g')
\end{pmatrix},\ \ \big(g,g'\big) \in \mathsf{H} \times \mathsf{H},\end{equation*} where \big\{$\rho_i: \mathsf{H} \times \mathsf{H} \rightarrow \mathsf{GL}_{d_i}(k) \big\}_{i=1}^{r}$ are spanning representations and $d=\sum_{i=1}^{r}d_i$. Suppose that $\rho_i$ is $2$-normal for every $1 \leq i \leq r$. Then $\rho$ is $(r+1)$-normal.\end{lemma}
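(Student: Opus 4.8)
The plan is to split $\rho$ according to its Levi decomposition, treating the diagonal blocks (immediate from the hypothesis) and the off-diagonal blocks (where the nilpotency of the unipotent radical is decisive) separately.

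\emph{Setup and diagonal blocks.} Each $\rho_i$ is irreducible: a proper $\rho_i$-invariant subspace of $k^{d_i}$ would confine $\rho_i(\mathsf H\times\mathsf H)$ to the proper subalgebra of matrices in $\mathfrak{gl}_{d_i}(k)$ preserving it, contradicting that $\rho_i$ is spanning; in particular each $\rho_i$ is semisimple. Let $\mathsf P\subseteq\mathsf{GL}_d(k)$ be the group of block upper triangular matrices with blocks of sizes $d_1,\ldots,d_r$, let $\mathsf L\cong\prod_i\mathsf{GL}_{d_i}(k)$ be its block diagonal Levi subgroup and $\mathsf U$ its unipotent radical, so $\mathsf P=\mathsf U\rtimes\mathsf L$, where $\mathsf U=\mathsf U^{(1)}\supseteq\mathsf U^{(2)}\supseteq\cdots\supseteq\mathsf U^{(r)}=\{1\}$ is the filtration by off-diagonal depth, $[\mathsf U^{(i)},\mathsf U^{(j)}]\subseteq\mathsf U^{(i+j)}$, and $\mathsf U$ is nilpotent of class $\leq r-1$. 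Write $\rho=\overline{\rho}\,u$ with $\overline{\rho}=\bigoplus_i\rho_i:\mathsf H\times\mathsf H\to\mathsf L$ and $u:\mathsf H\times\mathsf H\to\mathsf U$. Since $\rho_i(g)$ is the compression $\pi_i\rho(g)\iota_i$ by the coordinate-block projection and inclusion, and $\rho_i(g)^{-1}=\rho_i(g^{-1})=\pi_i\rho(g^{-1})\iota_i$, one has $\sigma_{d-d_i+j}(\rho(g))\leq\sigma_j(\rho_i(g))\leq\sigma_j(\rho(g))$ for $1\leq j\leq d_i$, whence $\|\mu(\rho_i(g))\|_{\mathbb E}\leq\sqrt{d}\,\|\mu(\rho(g))\|_{\mathbb E}$ for every $g$. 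Combined with $2$-normality of each $\rho_i$, this shows $\overline{\rho}$ is $2$-normal, and Lemma \ref{reg} then gives that $\overline{\rho}$ and each $\rho_i$ are $(r+1)$-normal. Thus $\|\overline{\rho}(1,[w_1,\ldots,w_{r+1}])^{\pm1}\|$ — equivalently all diagonal blocks of $\rho(1,[w_1,\ldots,w_{r+1}])^{\pm1}$ — are already bounded by $Ce^{a\sum_i\|\mu(\rho(w_i,w_i))\|_{\mathbb E}}$.

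\emph{A commutator identity and a reduction.} For all $v,w\in\mathsf H$ one has $\rho(1,[v,w])=[\rho(v,v),\rho(1,w)]$: indeed $(1,v)=(v,v)(v^{-1},1)$ and $(v^{-1},1)$ commutes with both $(v,v)$ and $(1,w)$, so $[\rho(1,v),\rho(1,w)]=[\rho(v,v)\rho(v^{-1},1),\rho(1,w)]=[\rho(v,v),\rho(1,w)]$. Setting $v_r=[w_1,\ldots,w_r]$ and applying this once (with $v=v_r$, $w=w_{r+1}$), together with the identity $[(w_1,w_1),\ldots,(w_r,w_r)]=(v_r,v_r)$, we get $\rho(1,[w_1,\ldots,w_{r+1}])=[H,B]$ where $B:=\rho(1,w_{r+1})$ and $H:=\rho(v_r,v_r)=[\rho(w_1,w_1),\ldots,\rho(w_r,w_r)]$ is an $r$-fold group commutator of the single elements $\rho(w_i,w_i)$. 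Since $\|\rho(w_i,w_i)^{\pm1}\|\leq\sqrt{d}\,e^{\|\mu(\rho(w_i,w_i))\|_{\mathbb E}}$, the element $H$ is \emph{controlled}, i.e. $\|H^{\pm1}\|\leq C'e^{2^{r}\sum_{i\leq r}\|\mu(\rho(w_i,w_i))\|_{\mathbb E}}$. Hence the lemma reduces to bounding $\|[H,B]^{\pm1}\|$ by $Ce^{a\sum_{i=1}^{r+1}\|\mu(\rho(w_i,w_i))\|_{\mathbb E}}$, with $H$ controlled and $B$ arbitrary.

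\emph{The off-diagonal estimate — the main point.} Modulo $\mathsf U$, $[H,B]$ maps to the controlled element $\overline{\rho}(1,[w_1,\ldots,w_{r+1}])$, so only the $\mathsf U$-component $u\big((1,[w_1,\ldots,w_{r+1}])\big)$ remains to be bounded. I would do this by induction on $r$, Lemma \ref{control} itself being the inductive statement (with $r=1$ vacuous). In the inductive step write $\rho$ as a $2\times2$ super-block matrix with diagonal super-blocks $\rho^{-}$ (the first $r-1$ blocks, which is $r$-normal by the inductive hypothesis) and $\rho_r$ ($r$-normal by the first step), and expand the off-diagonal super-block of $[H,B]=H^{-1}B^{-1}HB$ into its summands in terms of the blocks of $H$ and of $B$. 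Using the rewriting $B_\tau^{-1}H_\tau B_\tau=H_\tau[H_\tau,B_\tau]$ (and its $\sigma$-analogue) and the block-level instances of the identity above, $[\rho^{-}(v_r,v_r),\rho^{-}(1,w_{r+1})]=\rho^{-}(1,[w_1,\ldots,w_{r+1}])$ and $[\rho_r(v_r,v_r),\rho_r(1,w_{r+1})]=\rho_r(1,[w_1,\ldots,w_{r+1}])$, every summand reorganizes into either (a) a product of controlled matrices (blocks of $H$ and their inverses) times a commutator of blocks of $H$ and $B$ which is literally a diagonal super-block of $\rho(1,[w_1,\ldots,w_{r+1}])^{\pm1}$, hence controlled by the $(r+1)$-normality of $\rho^{-}$, $\rho_r$ and $\overline{\rho}$; or (b) a term supported in the next stratum $\mathsf U^{(2)}$ whose only uncontrolled ingredients cancel once the summands are collected, and which is then handled by the inductive hypothesis applied to $\rho^{-}$. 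The relation $[\mathsf U^{(i)},\mathsf U^{(j)}]\subseteq\mathsf U^{(i+j)}$ and nilpotency class $\leq r-1$ of $\mathsf U$ make this stratum-by-stratum descent terminate, and the need for an $(r+1)$-fold rather than $r$-fold commutator corresponds to the budget ``two brackets for the Levi plus one for each of the $r-1$ strata of $\mathsf U$''. As in the proof of Lemma \ref{mainthm-semisimple0}, irreducibility of the $\rho_i$ — which follows from the spanning hypothesis — is used in (b) to invoke Theorem \ref{finitesubset} and Lemma \ref{restriction}, controlling the norms of the relevant twisted cocycles by Cartan projections of the diagonal blocks and ruling out degenerate behaviour. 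Combining the two steps yields $\|\rho(1,[w_1,\ldots,w_{r+1}])^{\pm1}\|\leq Ce^{a\sum_{i=1}^{r+1}\|\mu(\rho(w_i,w_i))\|_{\mathbb E}}$, i.e. $\rho$ is $(r+1)$-normal. The hard part is exactly the bookkeeping in (b): among the many summands of the expanded off-diagonal super-block of $[H,B]$, identifying which products are controlled and exhibiting the cancellations modulo $\mathsf U^{(2)}$ that feed the induction.
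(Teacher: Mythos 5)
Your setup is sound as far as it goes: the diagonal blocks are controlled by the $2$-normality hypothesis plus Lemma \ref{reg}, the commutator identity $\rho(1,[v,w])=[\rho(v,v),\rho(1,w)]$ is correct, and the paper's proof is indeed an induction on $r$ based on a $2\times 2$ super-block decomposition (strictly a $3\times 3$ decomposition with a middle super-block $U_r$, but that is cosmetic). The genuine gap is in your item (b), which is where the entire difficulty lives and which you leave unexplained.

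The paper's key mechanism is the Goursat-type dichotomy of Lemma \ref{subalgebra}, which you never invoke: because $\rho_1$ and $\rho_r$ are both spanning, either (i) $(\rho_1\times\rho_r)(\mathsf H\times\mathsf H)$ spans $\mathfrak{gl}_{d_1}(k)\times\mathfrak{gl}_{d_r}(k)$, or (ii) $\rho_1$ and $\rho_r$ are conjugate. In case (i) one can choose a finite $k$-combination $\sum c_i\rho(\gamma_i,\delta_i)$ whose $(1,1)$-block is $\mathrm{I}_{d_1}$ and whose $(r,r)$-block is $0$; twisting by $(w,1)$ and $(1,w)$ and using Observation \ref{Obs1} then isolates the corner entry $a_{1r}(1,w)$ up to controlled error, giving $r$-normality outright. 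In case (ii) this fails, and what one actually gets is
\begin{equation*}
a_{1r}(1,w)=\lambda(w)\,\rho_1(1,w)+\Omega_w,\qquad \|\Omega_w\|\le C\,e^{c\sum_i\|\mu(\rho(w_i,w_i))\|_{\mathbb E}},
\end{equation*}
where the scalar $\lambda(w)\in k$ is genuinely \emph{uncontrolled} — there is no bound on $|\lambda(w)|$ in terms of the word metric. The whole point of passing from $r$ to $r+1$ commutators is that in the expansion of $a_{1r}(1,[w,w_{r+1}])$ the two terms carrying $\lambda(w)$ cancel identically, so the result does not depend on $\lambda(w)$ and the remaining terms are controlled. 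Your sketch says "the only uncontrolled ingredients cancel once the summands are collected" but does not identify what they are, why they arise only when $\rho_1\cong\rho_r$, or why the cancellation requires exactly one extra commutator. That cancellation is the theorem.

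Two further misfires: you appeal to Theorem \ref{finitesubset} and Lemma \ref{restriction} to control "twisted cocycles," but those results concern semisimple representations and are used only in the proof of Lemma \ref{mainthm-semisimple0} (to establish $2$-normality of the diagonal blocks, which here is a hypothesis, not something to prove); they play no role in bounding the unipotent part. And the "stratum-by-stratum descent through $\mathsf U^{(i)}$" picture, while natural, does not match what actually happens: the inductive hypothesis controls the interior super-block $U_r$ in one shot, and the only new estimate needed at each step is for the single corner block $a_{1r}$, handled by the Lemma \ref{subalgebra} dichotomy as above.
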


Before we give the proof of Lemma \ref{control}, we will also need the following observation.

\begin{observation} \label{Obs1} Let $\psi:\mathsf{H} \times \mathsf{H} \rightarrow \mathsf{GL}_d(k)$ be a representation and $\big \{(x_1,y_1),\ldots, (x_m,y_m)\big\}$ a finite subset of $\mathsf{H} \times \mathsf{H}$. There exist $C,a>0$, depending only on $\psi$, such that for every $w \in \mathsf{H}$, $$ \max_{1 \leq i \leq m} \Big| \Big| \psi \big(wx_i,y_iw\big)^{\pm 1}\Big|\Big| \leq Ce^{a ||\mu(\psi(w,w))||_{\mathbb{E}}}.$$ \end{observation}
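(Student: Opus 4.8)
The plan is to reduce the statement to a single elementary identity in the group $\mathsf{H}\times\mathsf{H}$ together with the standard comparison between the matrix norm and the norm of the Cartan projection. For every $1\le i\le m$ and every $w\in\mathsf{H}$ one has, directly from the definition of the product in $\mathsf{H}\times\mathsf{H}$ and with no appeal to commutativity,
$$\big(wx_i,y_iw\big)=\big(1,y_i\big)\big(w,w\big)\big(x_i,1\big)\qquad\text{and}\qquad\big(wx_i,y_iw\big)^{-1}=\big(x_i^{-1},1\big)\big(w^{-1},w^{-1}\big)\big(1,y_i^{-1}\big).$$
Applying $\psi$ we get $\psi(wx_i,y_iw)=\psi(1,y_i)\,\psi(w,w)\,\psi(x_i,1)$ and $\psi(wx_i,y_iw)^{-1}=\psi(x_i^{-1},1)\,\psi(w^{-1},w^{-1})\,\psi(1,y_i^{-1})$. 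The point of these factorisations is that all of the dependence on $w$ is concentrated in the diagonal factor $\psi(w,w)$, while the remaining factors $\psi(1,y_i)^{\pm1}$, $\psi(x_i,1)^{\pm1}$ (for $1\le i\le m$) lie in a finite set.

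Next I would estimate the right-hand sides by submultiplicativity of $\|\cdot\|$. Set $C_0:=\max_{1\le i\le m}\max\big\{\|\psi(1,y_i)^{\pm1}\|,\ \|\psi(x_i,1)^{\pm1}\|\big\}$, which depends only on $\psi$ and on the finite set $\{(x_i,y_i)\}_{i=1}^m$. For the middle factor I would invoke the elementary estimate recalled in the proof of Lemma \ref{reg}: there is a constant $c_d>0$, depending only on $d$, with $\|M\|\le c_d\,e^{\|\mu(M)\|_{\mathbb{E}}}$ for every $M\in\mathsf{GL}_d(k)$. Applying it to $\psi(w,w)$ and to $\psi(w^{-1},w^{-1})=\psi(w,w)^{-1}$, and using that $\mu\big(\psi(w,w)^{-1}\big)$ is obtained from $\mu\big(\psi(w,w)\big)$ by reversing the coordinates and changing their signs, so that the two vectors have the same Euclidean norm, we obtain
$$\max_{1\le i\le m}\Big\|\psi\big(wx_i,y_iw\big)^{\pm1}\Big\|\ \le\ C_0^2\,c_d\,e^{\|\mu(\psi(w,w))\|_{\mathbb{E}}}$$
for every $w\in\mathsf{H}$, which is the asserted inequality with $C=C_0^2 c_d$ and $a=1$.

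I do not expect a genuine obstacle here; the observation is essentially bookkeeping. The only points deserving a little care are uniformity in the sign $\pm1$ and over the finite index set $i$ — which is why one takes the maximum over all of these when defining $C_0$ — and the identity $\|\mu(M^{-1})\|_{\mathbb{E}}=\|\mu(M)\|_{\mathbb{E}}$, which follows from the relation $\sigma_i(M^{-1})=\sigma_{d-i+1}(M)^{-1}$ between singular values. It is worth emphasising that the decomposition $(wx_i,y_iw)=(1,y_i)(w,w)(x_i,1)$ is exactly the form in which the statement will be used in the proof of Lemma \ref{control}: it converts an estimate on the Cartan projection of the diagonal subgroup into an estimate on the norm of $\psi$ evaluated at the elements $(wx_i,y_iw)$.
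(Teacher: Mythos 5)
Your proof is correct, and the overall strategy is the same as the paper's: express $(wx_i,y_iw)$ as a product of elements from a fixed finite set and diagonal factors $(w,w)^{\pm1}$, then combine sub-multiplicativity of $\|\cdot\|$ with the elementary bound $\|M\|\le c_d\,e^{\|\mu(M)\|_{\mathbb{E}}}$ and the symmetry $\|\mu(M^{-1})\|_{\mathbb{E}}=\|\mu(M)\|_{\mathbb{E}}$. What differs is the decomposition itself: the paper uses the six-factor identity
\[
(wx_i,y_iw)=(w,w)(x_i,x_i)(1,x_i)^{-1}(w,w)^{-1}(1,y_i)(w,w),
\]
which involves the diagonal three times, whereas you observe the shorter
\[
(wx_i,y_iw)=(1,y_i)(w,w)(x_i,1),
\]
with a single diagonal factor. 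Both identities are correct, and both yield the required bound $Ce^{a\|\mu(\psi(w,w))\|_{\mathbb{E}}}$; your version is cleaner and gives the better exponent $a=1$ instead of $a=3$, though this is immaterial for the statement, which only asserts the existence of some $C,a>0$. One small caveat in your closing remark: in the proof of Lemma \ref{control} the expression $\rho(wa_i,b_iw)$ is actually expanded as $\rho(w,1)\rho(a_i,b_i)\rho(1,w)$ when identifying the block $\mathcal{Y}(w)$, not via your factorisation; Observation \ref{Obs1} is only used there to bound $\|\sum_i c_i\rho(wa_i,b_iw)\|$. That does not affect the validity of your proof of the observation itself.
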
 

\begin{proof} Observe that for every $w \in \mathsf{H}$ and $1 \leq i \leq m$ we can write $$\big(wx_i,y_i w \big)=\big(w,w\big)\big(x_i,x_i\big)\big(1,x_i\big)^{-1}\big(w,w\big)^{-1}\big(1,y_i\big)\big(w,w\big).$$ The observation now follows directly from the sub-multiplicativity of the $\ell_2$-norm $||\cdot||$.
\end{proof}

\noindent {\em Proof of Lemma \ref{control}.}
We shall use induction on the number $r \in \mathbb{N}$ of the spanning representations $\rho_1,\rho_2,\ldots,\rho_r$. First, we are going to prove the statement for $r=2$.
\medskip

\noindent {\bf Case 1:} {\em Suppose that $r=2$}. The representation $\rho$ is of the form $$\rho\big(g,g'\big)=\begin{pmatrix}[1.3]
\rho_1(g,g') & u(g,g') \\ 
 0& \rho_2(g,g') 
\end{pmatrix}, \ \ (g,g')\in \mathsf{H} \times \mathsf{H},$$ for some matrix valued function $u:\mathsf{H} \times \mathsf{H} \rightarrow \mathsf{Mat}_{d_1 \times d_2}(k)$. Observe that there \hbox{exist $R,c>0$ such that} \begin{equation} \label{trivial} \max_{i=1,2}\Big|\Big|\rho_i\big(g,g\big)\Big|\Big|\leq \Big|\Big|\rho\big(g,g\big)\Big|\Big|\leq Re^{c||\mu(\rho(g,g))||_{\mathbb{E}}}\end{equation} for every $g \in \mathsf{H}$. It is enough to prove that there exist $J,a>0$ such that $$\Big|\Big|u\big(1,[w_1,w_2]\big)\Big|\Big|\leq Je^{a\sum_{i=1}^{2} ||\mu(\rho(w_i,w_i))||_{\mathbb{E}}}$$ for every $w_1,w_2\in \mathsf{H}$. By assumption, $\rho_1, \rho_2$ are spanning representations, hence by Lemma \ref{subalgebra} there are two cases to consider:
\medskip

\noindent {\em Case 1a. $\big\langle (\rho_1 \times \rho_2)(\mathsf{H} \times \mathsf{H}) \big \rangle=\mathfrak{gl}_{d_1}(k)\times \mathfrak{gl}_{d_2}(k)$.} \newline In particular, we may choose $\big\{(a_i,b_i)\big\}_{=1}^{q} \subset \mathsf{H} \times \mathsf{H}$ and $\big\{c_i\big\}_{i=1}^{q} \subset k$ such that $$\sum_{i=1}^{q} c_i \rho_1(a_i,b_i)=\textup{I}_{d_1} \ \ \textup{and} \ \ \sum_{i=1}^{q} c_i \rho_2(a_i,b_i)=0_{d_2}.$$ \par Let $w_1,w_2 \in \mathsf{H}$ be arbitrary elements and set $w:=[w_1,w_2]$. We may directly check that \begin{align*} &\sum_{i=1}^{q}c_i \rho(wa_i,b_iw)=
\begin{pmatrix}[1.2]
\rho_1(w,w) & \mathcal{Y}(w)\\ 
 0& 0_{d_2} 
\end{pmatrix}, \\ \mathcal{Y}(w)&:=\rho_1\big(w,1\big)\Bigg(\sum_{i=1}^{q}c_i u\big(a_i,b_i\big)\Bigg)\rho_2\big(1,w\big)+ \rho_1\big(w,1\big)u\big(1,w\big)\end{align*} and note that $$\Big|\Big| \mathcal{Y}(w)\Big| \Big| \leq  \bigg|\bigg|\sum_{i=1}^{q}c_i\rho(wa_i,b_iw)\bigg|\bigg| \leq \sum_{i=1}^{q}|c_i|  \Big|\Big|\rho(wa_i,b_i w)\Big|\Big|.$$ Therefore, by Observation \ref{Obs1}, the fact that $\rho_1$ and $\rho_2$ are $2$-normal and (\ref{trivial}), there exist \hbox{$R_1,r_1>0$,} depending only on $\rho$ and $c_1,\ldots,c_q\in k$, such that  \begin{align*} \Big|\Big|u\big(1,w\big)\Big|\Big|&= \Bigg| \Bigg| \rho_1\big(w,1\big)^{-1}\mathcal{Y}(w)-\sum_{i=1}^{q} c_i u\big(a_i,b_i\big) \rho_2\big(1,w\big)\Bigg|\Bigg|\\ & \leq \Big|\Big|\rho_1\big(1,w\big)^{-1}\Big|\Big|\cdot \Bigg|\Bigg|\sum_{i=1}^{q}c_i\rho \big(wa_i,b_iw\big)\Bigg|\Bigg|+\sum_{i=1}^{q}\big|c_i\big| \Big|\Big|u\big(a_i,b_i\big)\Big|\Big| \cdot \Big|\Big|\rho_2\big(1,w\big)\Big|\Big| \\ & \leq  R_1e^{r_1\sum_{i=1}^{2} ||\mu(\rho(w_i,w_i))||_{\mathbb{E}}}.\end{align*} We deduce that there exist $R_2,r_2>0$, depending only on $\rho$, such that $$\Big|\Big|\rho \big(1,\big[w_1,w_2\big]\big)\Big|\Big|\leq R_2e^{r_2\sum_{i=1}^{2} ||\mu(\rho(w_i,w_i))||_{\mathbb{E}}}$$ for every $w_1,w_2 \in \mathsf{H}$. It follows that $\rho$ is $2$-normal and hence $3$-normal by Lemma \ref{reg}. $\qed$
\medskip

\noindent {\em Case 1b. $d_1=d_2$ and $\rho_1$ and $\rho_2$ are conjugate.} 

\noindent Up to conjugating $\rho$ by an element of $\mathsf{GL}_{d_1}(k)\times \mathsf{GL}_{d_1}(k)$ we may assume that $\rho_1=\rho_2$. By assumption, $\rho_1(\mathsf{H}\times \mathsf{H})$ spans $\mathfrak{gl}_{d_1}(k)$, hence for every $1 \leq i,j \leq d_1$ there exist $m_{ij} \in \mathbb{N}$, $\big\{c_{ij\ell}\}_{\ell=1}^{m_{ij}} \subset k$ and $\big\{(a_{ij\ell},b_{ij\ell})\big\}_{\ell=1}^{m_{ij}}\subset \mathsf{H} \times \mathsf{H}$ such that $\sum_{\ell=1}^{m_{ij}}c_{ij\ell}\rho_1(a_{ij\ell},b_{ij\ell})=E_{ij}$.
\medskip

\par Now let $w_1,w_2 \in \mathsf{H}$ and set $w:=[w_1,w_2]$. For every $1 \leq i,j \leq d_1$ we have: \begin{align*}\sum_{\ell=1}^{m_{ij}}c_{ij\ell}\rho \big(wa_{ij\ell},b_{ij\ell}w\big)&=\begin{pmatrix}[1.3]
\rho_1(w,1) & u(w,1) \\ 
0 & \rho_1(w,1)
\end{pmatrix} \begin{pmatrix}[1.3]
E_{ij} & \mathcal{U}_{ij}\\ 
0 & E_{ij}
\end{pmatrix} \begin{pmatrix}[1.3]
\rho_1(1,w) & u(1,w) \\ 
0 & \rho_1(1,w)
\end{pmatrix}\\ &= \begin{pmatrix}[1.3]
\rho_1(w,1)E_{ij}\rho_1(1,w) & \mathcal{V}_{ij}(w) \\ 
 0& \rho_1(w,1)E_{ij}\rho_1(1,w) 
\end{pmatrix}, \\
\mathcal{U}_{ij}&:=\sum_{\ell=1}^{m_{ij}}c_{ij\ell} u(a_{ij\ell},b_{ij\ell}),\\
\mathcal{V}_{ij}(w)&:=\rho_1(w,1)\mathcal{U}_{ij}\rho_1(1,w)+u(w,1)E_{ij}\rho_1(1,w)+ \rho_1(w,1)E_{ij}u(1,w). \end{align*} By Observation \ref{Obs1}, the fact that $\rho_1$ is $2$-normal and (\ref{trivial}), we obtain $B_0,b_0>0$, depending only on $\rho$, such that \begin{align*} &\Big| \Big| u(w,1)E_{ij}\rho_1(1,w)+\rho_1(w,1)E_{ij}u(1,w) \Big| \Big|= \Big| \Big| \mathcal{V}_{ij}(w)-\rho_1(w,1)\mathcal{U}_{ij}\rho_1(1,w) \Big| \Big|\\&
\leq \Big|\Big| \sum_{\ell=1}^{m_{ij}}c_{ij\ell}\rho(wa_{ij\ell},b_{ij\ell}w)\Big|\Big|+ \Big|\Big|\rho_1(1,w)\Big|\Big|\cdot \Big|\Big|\rho_1(w,1)\Big|\Big|\cdot \Big|\Big|\mathcal{U}_{ij}\Big|\Big|\\
&\leq B_0e^{b_0\sum_{i=1}^{2} ||\mu(\rho(w_i,w_i))||_{\mathbb{E}}}\end{align*} for every $1 \leq i,j \leq d_1$. Let us also observe that for every $g \in \mathsf{H}$ we have \begin{equation} \begin{split} u(g,g)&=\rho_1(g,1)u(1,g)+u(g,1)\rho_1(1,g)\\ u(g,1)&=u(g,g)\rho_1(1,g)^{-1}-\rho_1(g,1)u(1,g)\rho_1(1,g)^{-1}.\end{split} \end{equation} Therefore, since $\rho_1$ is $2$-normal and by (\ref{trivial}) we have $\big|\big|u(w,w)\big|\big|\leq Re^{c||\mu(\rho(w,w))||_{\mathbb{E}}}$, there exist $B_1,b_1>0$, depending only on $\rho$, such that $$\Big| \Big| \rho_1(w,1) u(1,w) \rho_1(1,w)^{-1}E_{ij}\rho_1(1,w)- \rho_1(w,1)E_{ij}u(1,w) \Big| \Big| \leq B_{1}e^{b_{1} \sum_{i=1}^{2} ||\mu(\rho(w_i,w_i))||_{\mathbb{E}}}.$$ In addition, by the sub-multiplicativity of the $\ell_2$-norm, note that \begin{align}\label{lowerbound0} \Big| \Big|u(1,w) \rho_1(1,w)^{-1}E_{ij}- E_{ij}u(1,w)\rho_1(1,w)^{-1} \Big| \Big| \leq \Big| \Big| \rho_1(w,1)^{-1}\Big|\Big| \cdot \Big| \Big| \rho_1(1,w)^{-1}\Big| \Big| \cdot\end{align} \begin{align*} \ \ \  \cdot \Big| \Big| \rho_1(w,1) u(1,w) \rho_1(1,w)^{-1}E_{ij}\rho_1(1,w)- \rho_1(w,1)E_{ij}u(1,w) \Big| \Big|.\end{align*} Since $\rho_1$ is $2$-normal, by using (\ref{lowerbound0}), we deduce that there exist $B_2,b_2>0$, depending only on $\rho$, such that for every $w_1,w_2 \in \mathsf{H}$ we have \begin{equation} \label{eq2}  \max_{1\leq i,j \leq d_1}\Big| \Big| \big(u(1,w) \rho_1(1,w)^{-1}\big)E_{ij}-E_{ij}\big(u(1,w)\rho_1(1,w)^{-1}\big) \Big| \Big| \leq B_{2}e^{b_{2}\sum_{i=1}^{2} ||\mu(\rho(w_i,w_i))||_{\mathbb{E}}}, \end{equation} where $w=[w_1,w_2]$. By using (\ref{eq2}) and the bound $||Q-q_{11}\textup{I}_d|| \leq d\underset{1\leq i,j\leq d}{\max}||QE_{ij}-E_{ij}Q||$ for any matrix $Q=(q_{ij})_{i,j=1}^{d}\in \mathfrak{gl}_d(k)$, we may find $B_3,b_3>0$, depending only on $\rho$, and write \begin{equation*} \label{eql1} u(1,w)\rho_1(1,w)^{-1}=\phi(w)\textup{I}_{d_1}+\mathcal{D}_{w},\end{equation*} for some $\phi(w)\in k$ and $\mathcal{D}_w \in \mathfrak{gl}_{d_1}(k)$ with $\big|\big|\mathcal{D}_w\big|\big|\leq B_3e^{b_3\sum_{i=1}^{2} ||\mu(\rho(w_i,w_i))||_{\mathbb{E}}}.$ Equivalently, since $\rho_1$ is $2$-normal, there exist $B_4,b_4>0$, depending only on $\rho$, such that \begin{equation} \label{eql1} u(1,w)=\phi(w)\rho_1(1,w)+\mathcal{D}_{w}',\end{equation} where $\mathcal{D}_w' \in \mathfrak{gl}_{d_1}(k)$ and $\big|\big|\mathcal{D}_w'\big|\big|\leq B_4e^{b_4\sum_{i=1}^{2} ||\mu(\rho(w_i,w_i))||_{\mathbb{E}}}.$ By using the fact that $$u(1,w^{-1})=-\rho_1(1,w)^{-1}u(1,w)\rho_1(1,w)^{-1},$$ $\rho_1$ is $2$-normal and (\ref{eql1}), we may choose $B_5,b_5>0$, depending only on $\rho$, and write \begin{equation} u(1,w^{-1})=-\phi(w)\rho_1(1,w)^{-1}+\mathcal{D}_{w^{-1}}'\end{equation} for some $\mathcal{D}_{w^{-1}}' \in \mathfrak{gl}_{d_1}(k)$ with $\big|\big|\mathcal{D}_{w^{-1}}'\big|\big|\leq \big| \big|\rho_1(1,w^{-1})\big| \big|^2 \big|\big|\mathcal{D}_w'\big|\big|\leq B_5e^{b_5\sum_{i=1}^{2} ||\mu(\rho(w_i,w_i))||_{\mathbb{E}}}.$ 
\medskip

\par Let us now fix $w_3 \in \mathsf{H}$. Then we have the following straightforward calculation:\begin{equation} \label{eql} \begin{split} & \rho \big(1,[w,w_3]\big)=\rho \big(1,w^{-1} \big)\rho \big(w_3^{-1},w_3^{-1}\big)\rho \big(1,w\big)\rho \big(w_3,w_3\big)\\
&=\begin{pmatrix}[1.4]
 \rho_1 \big(w_3^{-1},w^{-1}w_3^{-1}\big)&-\phi(w)\rho_1\big(w_3^{-1},w^{-1}w_3^{-1}\big)+\mathcal{D}_{w^{-1}}'\rho_1\big(w_3^{-1},w_3^{-1}\big)+\rho_1\big(1,w^{-1}\big)u\big(w_3^{-1},w_3^{-1}\big) \\ 
 0& \rho_1\big(w_3^{-1},w^{-1}w_3^{-1}\big)
\end{pmatrix}
\cdot \\ &\cdot \begin{pmatrix}[1.4]
 \rho_1 \big(w_3,w w_3\big)&\eta(w)\rho_1\big(w_3,w w_3\big)+\mathcal{D}_{w}'\rho_1\big(w_3,w_3\big)+\rho_1\big(1,w\big)u\big(w_3,w_3\big) \\ 
 0& \rho_1\big(w_3,w w_3\big)
\end{pmatrix} \\ &= 
\begin{pmatrix}[1.4]
\rho_1\big (1,[w,w_3]\big) & u\big(1,[w,w_3]\big) \\ 
0  & \rho_1\big(1,[w,w_3]\big)
\end{pmatrix},\\   & u\big(1,[w,w_3]\big):=\underline{\rho_1\big(w_3^{-1},w^{-1}w_3^{-1}\big)\phi(w)\rho_1\big(w_3,ww_3\big)} \\ &+ \Big(\mathcal{D}_{w^{-1}}'\rho_1\big(w_3^{-1},w_3^{-1}\big)+\rho_1\big(1,w^{-1}\big)u\big(w_3^{-1},w_3^{-1}\big) \Big)\rho_1\big(w_3,ww_3\big)\\ & +\rho_1\big(w_3^{-1},w^{-1}w_3^{-1}\big)\Big(\mathcal{D}_{w}'\rho_1\big(w_3,w_3\big)+\rho_1\big(1,w\big)u\big(w_3,w_3\big)\Big)\\ &-\underline{\phi(w)\rho_1 \big(w_3^{-1},w^{-1}w_3^{-1}\big)\rho_1\big(w_3,ww_3\big)}.\end{split} \end{equation} \noindent The underlined terms cancel, so $u(1,[w,w_3])$ does not depend on the scalar $\phi(w) \in k$. We recall that $\rho_1$ is $2$-normal and there exist $B_6,b_6>0$, depending only on $\rho$, such that $$\Big|\Big|\mathcal{D}_{w^{\pm 1}}'\Big|\Big|\leq B_6e^{b_6\sum_{i=1}^{2}||\mu(\rho(w_i,w_i))||_{\mathbb{E}}}.$$ We immediately check that all the remaining terms of $u(1,[w,w_3])$ are bounded by $B_7e^{b_7\sum_{i=1}^{3}||\mu(\rho(w_i,w_i))||_{\mathbb{E}}}$, where $B_7,b_7>0$ are constants depending only on $\rho$. Finally, by working similarly with the commutator $\big[w_3,[w_1,w_2]\big]$, we deduce that there exist $B_8,b_8>0$, depending only on $\rho$, such that \begin{align*} \Big|\Big| \rho \big(1,\big[[w_1,w_2],w_3\big]^{\pm 1}\big)\Big|\Big| & \leq  B_8 e^{b_8\sum_{i=1}^{3} ||\mu(\rho(w_i,w_i))||_{\mathbb{E}}}\end{align*} for every $w_1,w_2,w_3 \in \mathsf{H}$. We conclude that $\rho$ is $3$-normal. This completes the proof of the lemma when $r=2$. $ \ \square$
\medskip

\noindent {\bf Case 2:} {\em Suppose that $r \geq 3$}. The representation $\rho$ has the form $$\rho\big(g,g'\big)=
\begin{pmatrix}[1.2]
\rho_1(g, g') & v_r(g,g')  & a_{1r}(g,g')\\ 
0 & U_{r}(g,g') & V_{r}(g,g')\\ 
 0& 0 & \rho_r(g,g') 
\end{pmatrix}, \ (g,g')\in \mathsf{H} \times \mathsf{H},$$ where $v_r(g, g')$ is the collection of matrix blocks of $\rho$ different from $(1,1)$ in the first row, \hbox{$a_{1r}: \mathsf{H} \times \mathsf{H} \rightarrow \mathsf{Mat}_{d_1 \times d_r}(k)$} is the $(1,r)$-block of $\rho$, $V_{r}(g,g')$ is the collection of the blocks in the last column (except from $a_{1r}(g,g')$). The representation of $\mathsf{H}\times \mathsf{H}$ defined by $U_r$ is written in upper block form and has at most $r-2$ diagonal irreducible block representations. \par By the induction hypothesis and Lemma \ref{reg}, there exist $C_0,c_0>0$ such that \begin{equation} \label{Induction-bound}\max\big\{\big| \big| v_{r}(1,w)\big|\big|, \big|\big|U_{r}(1,w)\big| \big|,\big|\big| V_{r}(1,w)\big|\big|,\big| \big| v_{r}(w,1)\big|\big|,\big|\big|U_{r}(w,1)\big| \big|,\big|\big|V_{r}(w,1)\big|\big| \big\} \leq C_0e^{c_0 \sum_{i=1}^{r}||\mu(\rho(w_i,w_i))||_{\mathbb{E}}},\end{equation} for every $w_1,\ldots,w_{r}\in \mathsf{H}$ and $w=[w_1,\ldots, w_{r}]$. It remains to show that there exists $J,a>0$ such that \begin{equation}\label{nprove} \Big|\Big|a_{1r}\big(1,[w_1,\ldots,w_{r+1}]\big)\Big|\Big|\leq Je^{a \sum_{i=1}^{r+1}||\mu(\rho(w_i,w_i))||_{\mathbb{E}}}\end{equation} for every $w_1,\ldots, w_r,w_{r+1} \in \mathsf{H}$.\par Similarly as before, by using Lemma \ref{subalgebra}, it is enough to consider two sub-cases for the spanning representations $\rho_1$ and $\rho_r$:
\medskip

\noindent {\em Case 2a.} \hbox{$\big \langle (\rho_1\times \rho_{r})(\mathsf{H} \times \mathsf{H}) \big \rangle=\mathfrak{gl}_{d_1}(k)\times \mathfrak{gl}_{d_{r}}(k)$.}\\
 Let $w_1,\ldots,w_r\in \mathsf{H}$ be arbitrary elements and set $w:=[w_1,\ldots, w_{r}]$. We may choose $\big \{c_i\big \}_{i=1}^{q} \subset k$ and $\big \{(\gamma_i,\delta_i)\big \}_{i=1}^{q}\subset \mathsf{H} \times \mathsf{H}$ such that $$\sum_{i=1}^{q} c_{i}\rho_1\big(\gamma_i,\delta_i\big)=\textup{I}_{d_1} \ \ \textup{and} \ \ \sum_{i=1}^{q} c_{i}\rho_{r}\big(\gamma_i,\delta_i\big)=0_{d_r}.$$ Similarly as in Case 1a a direct computation shows that the $(1,r)$-block of the matrix $\mathcal{Q}(w):=\sum_{i=1}^{q} c_{i}\rho(w\gamma_i,\delta_i w)=\rho(w,1)\big(\sum_{i=1}^{q}c_i\rho(\gamma_i,\delta_i)\big) \rho(1,w)$ has the form $$\rho_1(w,1) a_{1r}(1,w)+\big(\rho_1(w,1)A+v_{r}(w,1)B\big)V_r(1,w)+ \big(\rho_1(w,1)C+v_r(w,1)D\big)\rho_r(1,w)$$ where $A,B,C,D$ are sub-blocks of the matrix $\sum_{i=1}^{q}c_i\rho(\gamma_i,\delta_i)\in \mathfrak{gl}_d(k)$. On the other hand, by using Observation \ref{Obs1}, we may find $C_1,c_1>0$, depending only on $\rho$, such that $$\Big|\Big|\mathcal{Q}(w)\Big|\Big| \leq C_1e^{c_1 ||\mu(\rho(w,w))||_{\mathbb{E}}}$$ for every $w_1,\ldots,w_{r}\in \mathsf{H}$ and $w=[w_1,\ldots, w_{r}]$. By using the fact that $\rho_1$ is $2$-normal and (\ref{Induction-bound}) we deduce that $a_{1r}$ satisfies (\ref{nprove}) and hence $\rho$ is $r$-normal. $\qed$
\medskip

\noindent {\em Case 2b. The representations $\rho_1$ and $\rho_r$ are conjugate.}\\ 
Up to conjugation by an element of $\mathsf{GL}_d(k)$ we may assume that $\rho_1=\rho_r$. For every $1 \leq i,j \leq d_1$, we choose $\big \{c_{ij\ell}\big\}_{\ell=1}^{q_{ij}} \subset k$ and $\big\{(\gamma_{ij\ell},\delta_{ij\ell}) \big\}_{\ell=1}^{q_{ij}} \subset \mathsf{H} \times \mathsf{H}$ such that $$E_{ij}=\sum_{\ell=1}^{q_{ij}} c_{ij\ell} \rho_1\big(\gamma_{ij\ell},\delta_{ij\ell}\big).$$ \par Let us fix again arbitrary elements $w_1,\ldots,w_r\in \mathsf{H}$ and $w:=[w_1,\ldots, w_{r}]$. Then if we let $\mathcal{Q}_{ij}(w):=\sum_{\ell=1}^{q_{ij}}c_{ij\ell}\rho \big(w \gamma_{ij\ell},\delta_{ij\ell}w\big)$, by Observation \ref{Obs1}, there exist $C_2,c_2>0$, depending only on $\rho$, such that $$\Big|\Big|\mathcal{Q}_{ij}(w)\Big|\Big|\leq C_2e^{c_2||\mu(\rho(w,w))||_{\mathbb{E}}}.$$ By looking the $(1,r)$-block of $\mathcal{Q}_{ij}(w)$ and using (\ref{Induction-bound}), there exist $C_3,c_3>0$, depending only on $\rho$, such that \begin{equation} \label{+2'} \Big|\Big| \rho_1(w,1)E_{ij}a_{1r}(1,w)+a_{1r}(w,1)E_{ij}\rho_1(1,w)\Big|\Big|\leq C_3 e^{c_3 \sum_{i=1}^{r}||\mu(\rho(w_i,w_i))||_{\mathbb{E}}}.\end{equation}

\par Let us observe that $$a_{1r}(w,w)=\rho_1(w,1)a_{1r}(1,w)+a_{1r}(w,1)\rho_1(1,w)+v_r(w,1)V_{r}(1,w),$$ hence, by using (\ref{Induction-bound}), there exist $C_4,c_4>0$, depending only on $\rho$, such that \begin{equation} \label{+2} \Big|\Big| \rho_1(w,1)a_{1r}(1,w)+a_{1r}(w,1)\rho_1(1,w)\Big|\Big| \leq C_4e^{c_4\sum_{i=1}^{r}||\mu(\rho(w_i,w_i))||_{\mathbb{E}}}.\end{equation}

Then, by using (\ref{+2'}) and (\ref{+2}) and the fact that $\rho_1$ is $2$-normal, we may find constants \hbox{$C_5,c_5>0$}, depending only on $\rho$, with the property \begin{equation} \max_{1\leq i,j \leq d_1}\Big|\Big| \big(a_{1r}(1,w)\rho_1(1,w)^{-1}\big)E_{ij}-E_{ij}\big(a_{1r}(1,w)\rho_1(1,w)^{-1}\big)\Big|\Big| \leq C_5e^{c_5 \sum_{i=1}^{r}||\mu(\rho(w_i,w_i))||_{\mathbb{E}}}.\end{equation} for every $w_1,\ldots, w_r\in \mathsf{H}$ and $w=\big[w_1,w_2,\ldots, w_{r}\big]$. Then we may find constants $C_6,c_6>0$ depending only on $\rho$ and write \begin{equation} \label{+1} a_{1r}\big(1,w\big)=\chi(w)\rho_1(1,w)+\Omega_{w}, \ \ \big|\big| \Omega_{w}\big| \big|\leq C_6e^{c_6\sum_{i=1}^{r}||\mu(\rho(w_i,w_i))||_{\mathbb{E}}} \end{equation} for some $\chi(w) \in k.$ In addition, let us observe that $$\rho_1(1,w)a_{1r}(1,w^{-1})+v_r(1,w)V_{r}(1,w^{-1})+a_{1r}(1,w)\rho_1(1,w^{-1})=0_{d_1},$$ hence by using the fact that $\rho_1$ is $2$-normal and (\ref{Induction-bound}), there exist $C_7,c_7>0$ depending only on $\rho$ such that \begin{equation} \label{+2''} \Big|\Big| a_{1r}(1,w^{-1})+\rho_1(1,w)^{-1}a_{1r}(w,1)\rho_1(1,w)^{-1}\Big|\Big| \leq C_7e^{c_7 \sum_{i=1}^{r}||\mu(\rho(w_i,w_i))||_{\mathbb{E}}}.\end{equation} By using (\ref{+1}) and (\ref{+2''}) we may write \begin{equation}\label{+2'''} a_{1r}\big(1,w^{-1}\big)=-\chi(w)\rho_1\big(1,w^{-1}\big)+\Omega_{w}', \ \ \big|\big| \Omega_{w}'\big| \big|\leq C_8e^{c_8\sum_{i=1}^{r}||\mu(\rho(w_i,w_i))||_{\mathbb{E}}}\end{equation} for some $C_8,c_8>0$ depending only on $\rho$. By writing $$\rho\big(1,[w,w_{r+1}]\big)=\rho \big(1,w^{-1}\big)\rho \big(w_{r+1}^{-1},w_{r+1}^{-1}\big)\rho\big(1,w \big)\rho \big(w_{r+1},w_{r+1}\big),$$ making a similar calculation as in (\ref{eql}) and using (\ref{+1}) and (\ref{+2'''}), we may check that for every $w_{r+1}\in \mathsf{H}$ the entries $a_{1r}\big(1,[w,w_{r+1}]\big)$ and $a_{1r}\big(1,[w_{r+1},w]\big)$ of $\rho(1,[w,w_{r+1}])$ do not depend on $\chi(w) \in k$ and there exist constants $C_9,c_9>0$ such that $$\Big|\Big|a_{1r}\big(1,[w_1,w_2,\ldots ,w_{r+1}]^{\pm 1}\big)\Big|\Big|\leq C_9e^{c_9 \sum_{i=1}^{r+1}||\mu(\rho(w_i,w_i))||_{\mathbb{E}}}$$ for every $w_1,w_2,\ldots,w_r \in N$. Therefore, we conclude that $a_{1r}: \mathsf{H} \times \mathsf{H} \rightarrow \mathsf{Mat}_{d_1 \times d_r}(k)$ satisfies (\ref{nprove}) for some $J,a>0$ depending only on $\rho$. In particular, Lemma \ref{reg} shows that $\rho$ is also $s$-normal for every $s \geq r+1$. The  proof of the lemma is complete. $\qed$ 
\medskip

Before we give the proof of Theorem \ref{mainthm-general} we will also need the following observation.

\begin{lemma} \label{extension} Let $k$ be a local field, $\mathsf{\Gamma}$ a group and  $\rho:\mathsf{\Gamma} \rightarrow \mathsf{GL}_{d}(k)$ a representation. There exists a finite extension $k_1$ of $k$ such that $\rho$ has the form \begin{equation} \label{generalform} \rho(\gamma)=h
\begin{pmatrix}
\rho_1(\gamma) & \cdots & \ast \\ 
 0&\ddots   &\vdots  \\ 
 0&0  & \rho_{r}(\gamma)
\end{pmatrix}h^{-1}, \ \ \gamma \in \mathsf{\Gamma}\end{equation} where \big\{$\rho_i: \mathsf{\Gamma} \rightarrow \mathsf{GL}_{d_i}(k_1) \big\}_{i=1}^{r}$ are spanning representations, $h\in \mathsf{GL}_d(k_1)$ and $d=\sum_{i=1}^{r}d_i$.\end{lemma}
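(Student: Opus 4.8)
The plan is to read off the decomposition from a composition series of $\overline{k}^{\,d}$, where $\overline{k}$ is an algebraic closure of $k$, and then to observe that everything in sight is already defined over a finite subextension of $\overline{k}/k$. Regard $\rho$ as a representation $\mathsf{\Gamma}\rightarrow \mathsf{GL}_d(\overline{k})$. Since $\overline{k}^{\,d}$ is finite dimensional over $\overline{k}$, it has finite length as a $\overline{k}[\mathsf{\Gamma}]$-module, hence admits a composition series $0=W_0\subsetneq W_1\subsetneq\cdots\subsetneq W_r=\overline{k}^{\,d}$ with irreducible successive quotients; put $d_i=\dim_{\overline{k}}(W_i/W_{i-1})$. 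Choosing a basis $(e_1,\ldots,e_d)$ of $\overline{k}^{\,d}$ adapted to this flag (so $e_1,\ldots,e_{d_1+\cdots+d_j}$ span $W_j$) and letting $h=(e_1|\cdots|e_d)$, the matrix $h^{-1}\rho(\gamma)h$ is block upper triangular with diagonal blocks $\rho_i(\gamma)$, where $\rho_i:\mathsf{\Gamma}\rightarrow\mathsf{GL}_{d_i}(\overline{k})$ is the representation on the composition factor $W_i/W_{i-1}$. The reason for passing to $\overline{k}$ is precisely that over an algebraically closed field an irreducible representation is automatically spanning: by Burnside's theorem on matrix algebras, the $\overline{k}$-span of $\rho_i(\mathsf{\Gamma})$ equals all of $\mathfrak{gl}_{d_i}(\overline{k})$ for each $i$.

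It remains to descend to a finite extension. Let $k_1\subseteq\overline{k}$ be the subfield generated over $k$ by the (finitely many) entries of $e_1,\ldots,e_d$. Since $\overline{k}/k$ is algebraic and $k_1/k$ is finitely generated, $k_1/k$ is a finite extension, so $k_1$ is again a local field. Now $h\in\mathsf{GL}_d(k_1)$, and for every $\gamma$ the matrix $h^{-1}\rho(\gamma)h$ has entries in $k_1$, because $\rho(\gamma)\in\mathsf{GL}_d(k)\subseteq\mathsf{GL}_d(k_1)$; since the vanishing of the sub-diagonal blocks is a condition on the entries alone, $h^{-1}\rho(\gamma)h$ is block upper triangular over $k_1$ with diagonal blocks $\rho_i(\gamma)\in\mathsf{GL}_{d_i}(k_1)$. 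Each $\rho_i:\mathsf{\Gamma}\rightarrow\mathsf{GL}_{d_i}(k_1)$ is a representation (extracting the $i$-th diagonal block is multiplicative on block upper triangular matrices and sends the identity to $\textup{I}_{d_i}$), and its scalar extension to $\overline{k}$ is the representation of the previous paragraph. Finally, the $k_1$-dimension of the span of any set of matrices with entries in $k_1$ is unchanged after extending scalars to $\overline{k}$ — pass to a finite subset realizing the span and note that $k_1$-linear independence of vectors over $k_1$ implies their $\overline{k}$-linear independence — so $\rho_i(\mathsf{\Gamma})$ spans $\mathfrak{gl}_{d_i}(\overline{k})$ if and only if it spans $\mathfrak{gl}_{d_i}(k_1)$. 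Hence each $\rho_i$ is spanning, and $k_1$, $h\in\mathsf{GL}_d(k_1)$ and $\rho_1,\ldots,\rho_r$ satisfy (\ref{generalform}) with $d=\sum_{i=1}^r d_i$.

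The two inputs — the Jordan--H\"older theorem for the finite-length module $\overline{k}^{\,d}$ over $\overline{k}[\mathsf{\Gamma}]$, and Burnside's theorem identifying irreducible representations over $\overline{k}$ with spanning ones — are standard, and the rest is bookkeeping. The one point I would write out with some care is the descent: the composition-series flag and the conjugating matrix are a priori defined only over $\overline{k}$, but only finitely many basis vectors are involved (even though $\mathsf{\Gamma}$ may be infinite), so the subfield of $\overline{k}$ they generate over $k$ is finitely generated and algebraic, hence finite over $k$; combined with the field-independence of "block upper triangularity" and of "spanning $\mathfrak{gl}_{d_i}$", this is exactly what lets the whole decomposition descend to $k_1$.
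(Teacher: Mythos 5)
Your argument is correct and follows essentially the same path as the paper's: triangularize $\rho$ over $\overline{k}$ via a composition series of $\overline{k}[\mathsf{\Gamma}]$-modules, apply Burnside's theorem so that each irreducible diagonal block is spanning over $\overline{k}$, and then descend to the finite subextension of $\overline{k}/k$ generated by the entries of the change-of-basis matrix $h$. The one small point of departure is the final descent of the spanning property: the paper, having set $k_0 = k(\{a_{ij}(h^{\pm1})\})$, further adjoins the coefficients $\epsilon_{p_iq_ij}\in\overline{k}$ appearing in Burnside expressions $E_{p_iq_i}=\sum_j\epsilon_{p_iq_ij}\rho_i(\gamma_{p_iq_ij})$ to form $k_1\supseteq k_0$, whereas you observe that for a set of matrices with entries in a subfield $k_1\subseteq\overline{k}$, the $k_1$-span has the same dimension as the $\overline{k}$-span (pass to a finite spanning set; $k_1$-linear independence persists over $\overline{k}$), so spanning over $\overline{k}$ already forces spanning over $k_1$ — making the extra adjunction unnecessary and letting you take $k_1 = k(\text{entries of }h) = k_0$ directly; both versions are correct, yours being marginally more economical.
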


\begin{proof} Let $\overline{k}$ be the algebraic closure of $k$. There exists $h\in \mathsf{GL}_d(\overline{k})$ and irreducible representations $\big\{\rho_i:\mathsf{\Gamma}\rightarrow \mathsf{GL}_{d_i}(\overline{k})\big\}_{i=1}^{r}$, where $d=\sum_{i=1}^{r}d_i$, such that $\rho$ has the form (\ref{generalform}). Let $\big\{b_{ij}:\mathsf{\Gamma} \rightarrow \overline{k}\big\}_{(i,j)\in \mathcal{F}}$ be the finite set of matrix entries off the diagonal, $a_{ij}(h^{\pm 1})\in \overline{k}$ the $(i,j)$-entry of $h^{\pm 1}\in \mathsf{GL}_d(\overline{k})$ and set $k_{0}:=k\big(\{a_{ij}(h^{\pm 1}):i,j=1,\ldots,d\}\big)$. Observe that the extension $k\subset k_0$ is finite (since $k\subset \overline{k}$ is algebraic), $\rho_{i}(\gamma)\in \mathsf{GL}_{d_i}(k_0)$ and $b_{ij}(\gamma)\in k_0$ for every $\gamma \in \mathsf{\Gamma}$. Moreover, since $\rho_1,\ldots, \rho_r$ are irreducible representations over the algebraically closed field $\overline{k}$, by Burnside's theorem (e.g. see \cite{Burnside}), the representation $\rho_i$ is spanning for every $i$. In particular, for every $1\leq i \leq r$, $1\leq p_i \leq d_i$ and $1 \leq q_i \leq d_i$, there exist $\gamma_{p_i q_i 1},\ldots ,\gamma_{p_i q_i r_i} \in \mathsf{\Gamma}$ such that $E_{p_i q_i}=\sum_{j=1}^{r_i} \epsilon_{p_i q_i j} \rho_i(\gamma_{p_i q_i j})$ and $\epsilon_{p_iq_i1},\ldots, \epsilon_{p_iq_id_i}\in \overline{k}$. Now we consider the field $$k_1:=k_0\big( \{\epsilon_{p_i q_i j}: 1\leq p_i ,q_i\leq d_i, 1\leq i \leq r\}\big)$$ and note that $k\subset k_1$ is a finite field extension. By the definition of $k_1$, $\rho_{i}:\mathsf{\Gamma} \rightarrow \mathsf{GL}_{d_i}(k_1)$ is a spanning representation for every $i$. Moreover, since $k_0\subset k_1$, $b_{ij}(\gamma)\in k_1$ for every $\gamma \in \mathsf{\Gamma}$ and every $i,j$. It follows that $k_1$ satisfies the conclusion of the lemma.\end{proof}

\noindent {\em Proof of Theorem \ref{mainthm-general}.} By Lemma \ref{extension}, we may replace $k$ with a finite extension and conjugate with an element of $\mathsf{GL}_d(k)$ so that $\rho$ has the form: $$\rho(g,g')=\begin{pmatrix}[1.3]
\rho_1(g,g') &\ldots  & \ast \\ 
 0  & \ddots   & \vdots \\ 
 0 &0  & \rho_{s}(g,g') 
\end{pmatrix}, \ \ (g,g')\in \mathsf{H}\times \mathsf{H},$$ where $d=\sum_{i=1}^{s}d_i$ and $\big\{\rho_{i}:\mathsf{H} \times \mathsf{H} \rightarrow \mathsf{GL}_{d_i}(k)\big \}_{i=1}^{s}$ are spanning representations. In particular, for every $i$, $\rho_i$ is irreducible and hence $2$-normal by Lemma \ref{mainthm-semisimple0}. Therefore, Lemma \ref{control} shows that $\rho$ is $(s+1)$-normal. In particular, by Lemma \ref{reg}, $\rho$ is $m$-normal for every $m\geq s+1$. In other words there exists $C>1$, depending only on $\rho$, such that $$\Big|\Big| \mu \big(\rho \big(1,\big[w_1,w_2,\ldots, w_{r} \big] \big)\big)\Big|\Big|_{\mathbb{E}}\leq 2^{r}C\Big(1+\sum_{i=1}^{r}\Big|\Big|\mu \big(\rho(w_i,w_i)\big)\Big|\Big|_{\mathbb{E}}\Big)$$ for every $w_1,w_2,\ldots,w_r\in \mathsf{H}$, where $r\geq d+1\geq s+1$. $\qed$
\medskip

\noindent {\em Proof of Corollary \ref{mainthm2-fiberproducts}}. Let $\rho:\Gamma \times_N \Gamma \rightarrow \mathsf{GL}_d(k)$ be a representation. Note that since $\Gamma$ is finitely generated there exists a constant $C_0>0$ such that $\big|\big| \mu(\rho(\gamma,\gamma))\big|\big|_{\mathbb{E}}\leq C_0\big|\gamma \big|_{\Gamma}$ for every $\gamma \in \Gamma$. By Theorem \ref{mainthm2} there exists $C>0$ depending only on $\rho$ such that for every $\gamma \in \Gamma$ and $w_1,\ldots,w_r \in N\smallsetminus \{1\}$, $r \geq d+1$, we have: \begin{align*}\Big|\Big| \mu \big(\rho(\gamma, \gamma[w_1,\ldots, w_r]\big)\big)\Big|\Big|_{\mathbb{E}}&\leq \Big|\Big| \mu \big(\rho(\gamma, \gamma \big)\big)\Big|\Big|_{\mathbb{E}}+ \Big|\Big| \mu \big(\rho(1,[w_1,\ldots, w_r]\big)\big)\Big|\Big|_{\mathbb{E}}\\& \leq C_0\big|\gamma \big|_{\Gamma}+2^r C+2^r C\sum_{i=1}^{r} \Big|\Big|\mu \big(\rho(w_i,w_i)\big)\Big|\Big|_{\mathbb{E}}\\ &\leq C_0\big|\gamma \big|_{\Gamma}+2^{r+1} C C_0 \sum_{i=1}^{r}\big|w_i\big|_{\Gamma}.\end{align*} The corollary follows. $\qed$ 

Similarly, by using Lemma \ref{mainthm-semisimple0}, we obtain the following corollary for semisimple representations.

\begin{corollary} \label{lemma-semisimple1} Let $\Gamma$ be a finitely generated group, $N$ a normal subgroup of $\Gamma$ and fix \hbox{$|\cdot|_{\Gamma}:\Gamma \rightarrow \mathbb{N}$} a word length function on $\Gamma$. Suppose that \hbox{$\rho:\Gamma \times _N \Gamma \rightarrow \mathsf{GL}_d(k)$} is a semisimple representation. There exist $C,c>0$, depending only on $\rho$, such that $$\Big|\Big| \mu \big(\rho(\gamma, \gamma w)\big)\Big|\Big|_{\mathbb{E}}\leq C\Big(\big |\gamma \big |_{\Gamma}+\big|\gamma w \big|_{\Gamma}\Big)+c$$ for every $\gamma \in \Gamma$ and $w \in [N,N]$. \end{corollary}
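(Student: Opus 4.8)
The plan is to reduce the statement directly to Lemma \ref{mainthm-semisimple0} applied with $\mathsf{H}=N$. The observation that makes this possible is that $N\times N$ sits inside the fiber product: for $a,b\in N$ one has $(a,b)=\big(a,a(a^{-1}b)\big)$ with $a\in\Gamma$ and $a^{-1}b\in N$, so $(a,b)\in\Gamma\times_N\Gamma$; and since $[N,N]\subseteq N$, the elements $(\gamma,\gamma w)$ with $w\in[N,N]$ indeed lie in $\Gamma\times_N\Gamma$. Moreover, because $N$ is normal in $\Gamma$, the subgroup $N\times N$ is in fact normal in $\Gamma\times_N\Gamma$: one checks that $(\gamma,\gamma w)(a,b)(\gamma,\gamma w)^{-1}=\big(\gamma a\gamma^{-1},\,\gamma wbw^{-1}\gamma^{-1}\big)$ again lies in $N\times N$. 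Consequently, by Fact \ref{normal}, the restriction $\rho|_{N\times N}\colon N\times N\to\mathsf{GL}_d(k)$ is semisimple.

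First I would apply Lemma \ref{mainthm-semisimple0} to this semisimple representation of the product $N\times N$. This produces constants $C_1,c_1>0$, depending only on $\rho|_{N\times N}$ and hence only on $\rho$, such that $\big|\big|\mu(\rho(1,w))\big|\big|_{\mathbb{E}}\leq C_1\big|\big|\mu(\rho(w,w))\big|\big|_{\mathbb{E}}+c_1$ for every $w\in[N,N]$.

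Then I would carry out the same bookkeeping as in the proof of Corollary \ref{mainthm2-fiberproducts}. Since $\Gamma$ is finitely generated there is $C_0>0$ with $\big|\big|\mu(\rho(g,g))\big|\big|_{\mathbb{E}}\leq C_0|g|_{\Gamma}$ for all $g\in\Gamma$ (write $g$ as a geodesic word in the fixed generating set and use sub-multiplicativity of the $\ell_2$-norm). Writing $(\gamma,\gamma w)=(\gamma,\gamma)(1,w)$ and using sub-additivity of the Cartan projection exactly as at the start of the proof of Corollary \ref{mainthm2-fiberproducts}, I would estimate
\begin{align*}
\big|\big|\mu\big(\rho(\gamma,\gamma w)\big)\big|\big|_{\mathbb{E}}
&\leq \big|\big|\mu\big(\rho(\gamma,\gamma)\big)\big|\big|_{\mathbb{E}}+\big|\big|\mu\big(\rho(1,w)\big)\big|\big|_{\mathbb{E}}\\
&\leq C_0|\gamma|_{\Gamma}+C_1\big|\big|\mu\big(\rho(w,w)\big)\big|\big|_{\mathbb{E}}+c_1
\leq C_0|\gamma|_{\Gamma}+C_1C_0|w|_{\Gamma}+c_1,
\end{align*}
and then invoke left-invariance of the word metric, $|w|_{\Gamma}=|\gamma^{-1}(\gamma w)|_{\Gamma}\leq|\gamma|_{\Gamma}+|\gamma w|_{\Gamma}$, to conclude with $C:=C_0(1+C_1)$ and $c:=c_1$.

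I do not expect a serious obstacle: the only substantive point is the group-theoretic remark that $N\times N$ is a normal subgroup of $\Gamma\times_N\Gamma$, which puts Lemma \ref{mainthm-semisimple0} at our disposal with $\mathsf{H}=N$; everything else is the routine sub-additivity and finite-generation estimate already used for Corollary \ref{mainthm2-fiberproducts}. The only mild point to keep track of is that the constants coming out of Lemma \ref{mainthm-semisimple0} depend only on the restricted representation $\rho|_{N\times N}$, hence only on $\rho$, as required.
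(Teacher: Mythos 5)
Your proof is correct and fills in exactly the argument the paper leaves implicit when it says "Similarly, by using Lemma \ref{mainthm-semisimple0}": restrict $\rho$ to $N\times N$, observe that $N\times N$ is a normal subgroup of $\Gamma\times_N\Gamma$ so that Fact \ref{normal} gives semisimplicity of $\rho|_{N\times N}$, apply Lemma \ref{mainthm-semisimple0} with $\mathsf{H}=N$, and then run the same sub-additivity and word-length bookkeeping as in Corollary \ref{mainthm2-fiberproducts}. The normality check for $N\times N$ is the one non-routine detail, and you state and verify it correctly.
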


\section{Some further lemmas} \label{Lemmas}
In this section we prove some more lemmas that we need to establish Theorem \ref{nonqie1} and \hbox{Theorem \ref{nonqie2}.} We recall that $F_m$ denotes the free group on the set $\mathcal{X}:=\{x_1,\ldots,x_m\}$ and denote by $|\cdot|_{F_m}:F_m\rightarrow \mathbb{N}$ the corresponding left invariant word metric with respect to $\mathcal{X}\cup \mathcal{X}^{-1}$. The lower central series of $F_m$ is the descending series of characteristic subgroups $$\ldots \subset  \gamma_4(F_m) \subset \gamma_3(F_m) \subset \gamma_2(F_m) \subset \gamma_1(F_m):=F_m$$ inductively defined by $\gamma_{r+1}(F_m)=[F_m,\gamma_{r}(F_m)]$ for every $r\in \mathbb{N}$. The terms of the lower central series have the property that $\big[\gamma_{r}(F_m),\gamma_{d}(F_m)\big]\subset \gamma_{r+d}(F_m)$ for every $r,d\in \mathbb{N}$.

\begin{definition}\textup{(Basic commutators }\cite[Ch.3, Def. 3.3]{CMZ}\textup{)} The basic commutators of weight  are inductively defined as follows:

\noindent \textup{(1)} The elements $x_1,\ldots,x_m$ are the basic commutators of weight $1$ with the order $x_i<x_j$ if and only if $i<j$.\\ 
\noindent \textup{(2)} Suppose that commutators of weight $r\geq 1$ have been defined and ordered such that $y_1<y_2$ if the weight of $y_1$ is smaller than the weight of $y_2$. The basic commutators of weight $r+1$ are of the form $[x,y]$, where $x$ and $y$ are basic commutators of weight $k_1$ and $k_2$ respectively and the following conditions hold:\\
\noindent \textup{(i)} $k_1+k_2=r+1$ and $y<x$.\\
\noindent \textup{(ii)} if $x=[u,v]$, where $u,v$ are basic commutators with $v<u$, then $v\leq y$.\end{definition}

The weight of a basic commutator $v\in F_m$ is denoted by $w(v)\in \mathbb{N}$.
One of the key properties of the lower central series of $F_m$ is that the quotient group $A_{q}^{m}:=\gamma_q(F_m)/\gamma_{q+1}(F_m)$ is free abelian of finite rank. Magnus proved in \cite{Magnus} that the cosets defined by basic commutators of weight $q \geq 2$ in $F_m/\gamma_{q+1}(F_m)$ form a free basis for $A_{q}^{m}$ (see also \cite[Thm. 3.5]{CMZ}). We denote by $\mathcal{E}_q$ the set of basic commutators of weight $q \geq 1$ and we use the notation \hbox{$\mathcal{E}_{q}^{\pm1}:=\big\{g^{\pm 1}:g \in \mathcal{E}_q\big \}$.}

We will also use the following well known commutator identities: \begin{equation} \label{commident} \begin{split}  \big[ac,b\big]&=c^{-1}\big[a,b]c^{}\big[c,b\big]=\big[c,[a,b]^{-1}\big] \big[a,b\big] \big[c,b\big]\\  \big[a,cb\big]&=\big[a,b\big]b^{-1}\big[a,c\big]b=\big[a, b\big] \big[b, [a,c]^{-1}\big] \big[a,c\big]. \end{split} \end{equation}

By using induction on the weight of a basic commutator and the previous identities we have the following fact.

\begin{fact}\label{bci} If $v=v(x_1,\ldots,x_m)\in \gamma_r(F_m)$ is a basic commutator of weight $r \geq 1$, then $$v(x_1^n,\ldots,x_m^n)\gamma_{r+1}(F_m)=v(x_1,\ldots,x_m)^{n^{r}}\gamma_{r+1}(F_m)$$ for every $n \in \mathbb{N}$.

\end{fact}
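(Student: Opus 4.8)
The plan is to argue by induction on the weight $r$ of the basic commutator $v$. When $r=1$ we have $v=x_i$ for some $i$, so $v(x_1^n,\ldots,x_m^n)=x_i^{\,n}=v^{n}$ and there is nothing to prove. For the inductive step, write $v=[x,y]$, where $x$ and $y$ are basic commutators of weights $k_1$ and $k_2$ with $k_1+k_2=r$; in particular $x\in\gamma_{k_1}(F_m)$ and $y\in\gamma_{k_2}(F_m)$. The assignment $x_i\mapsto x_i^{n}$ extends to an endomorphism $\phi_n$ of $F_m$ which preserves every term $\gamma_q(F_m)$ of the lower central series (these being verbal, hence fully invariant, subgroups), and $\phi_n([x,y])=[\phi_n(x),\phi_n(y)]$. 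Writing $v(x_1^n,\ldots,x_m^n)=\phi_n(v)$ and applying the inductive hypothesis to $x$ and $y$, we get $\phi_n(x)=x^{n^{k_1}}\alpha$ and $\phi_n(y)=y^{n^{k_2}}\beta$ with $\alpha\in\gamma_{k_1+1}(F_m)$ and $\beta\in\gamma_{k_2+1}(F_m)$, so that
$$v(x_1^n,\ldots,x_m^n)=\big[x^{n^{k_1}}\alpha,\ y^{n^{k_2}}\beta\big].$$

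First I would discard the error terms $\alpha,\beta$ modulo $\gamma_{r+1}(F_m)$ using the identities (\ref{commident}). Applying $[ac,b]=c^{-1}[a,b]c\,[c,b]$ with $c=\alpha$: the factor $[\alpha,y^{n^{k_2}}\beta]$ lies in $[\gamma_{k_1+1}(F_m),\gamma_{k_2}(F_m)]\subset\gamma_{r+1}(F_m)$, and conjugating the element $[x^{n^{k_1}},y^{n^{k_2}}\beta]\in\gamma_{r}(F_m)$ by $\alpha$ changes it by an element of $[\gamma_r(F_m),\gamma_{k_1+1}(F_m)]\subset\gamma_{r+1}(F_m)$; hence $[x^{n^{k_1}}\alpha,y^{n^{k_2}}\beta]\equiv[x^{n^{k_1}},y^{n^{k_2}}\beta]\pmod{\gamma_{r+1}(F_m)}$. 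The companion identity $[a,cb]=[a,b]b^{-1}[a,c]b$ disposes of $\beta$ in exactly the same way, giving
$$v(x_1^n,\ldots,x_m^n)\equiv\big[x^{n^{k_1}},\ y^{n^{k_2}}\big]\pmod{\gamma_{r+1}(F_m)}.$$

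Then I would establish the ``bilinearity modulo $\gamma_{r+1}(F_m)$'' statement $[x^{s},y^{t}]\equiv[x,y]^{st}\pmod{\gamma_{r+1}(F_m)}$ for all $s,t\geq1$, again from (\ref{commident}) by induction on the exponents: from $[x^{s+1},y]=(x^{s})^{-1}[x,y]x^{s}[x^{s},y]$ and $[[x,y],x^{s}]\in[\gamma_r(F_m),\gamma_{k_1}(F_m)]\subset\gamma_{r+1}(F_m)$ (here $k_1\geq1$) one gets $[x^{s},y]\equiv[x,y]^{s}$, and the symmetric recursion with $[a,cb]=[a,b]b^{-1}[a,c]b$ gives $[x^{s},y^{t}]\equiv[x^{s},y]^{t}$; since $\gamma_r(F_m)/\gamma_{r+1}(F_m)$ is abelian, raising to a power is well defined modulo $\gamma_{r+1}(F_m)$, so combining the two recursions yields the claim. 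Taking $s=n^{k_1}$ and $t=n^{k_2}$ gives $[x^{n^{k_1}},y^{n^{k_2}}]\equiv[x,y]^{n^{k_1+k_2}}=v^{n^{r}}\pmod{\gamma_{r+1}(F_m)}$, which closes the induction.

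The only genuinely delicate point is the bookkeeping: at each application of (\ref{commident}) one has to check that the commutator being thrown away indeed lies in $\gamma_{r+1}(F_m)$, and in every instance this reduces to the inclusion $[\gamma_i(F_m),\gamma_j(F_m)]\subset\gamma_{i+j}(F_m)$ with $i+j\geq r+1$ together with the commutativity of $\gamma_r(F_m)/\gamma_{r+1}(F_m)$. I expect no obstacle beyond this routine verification.
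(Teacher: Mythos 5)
Your proof is correct, and it is essentially the argument the paper has in mind: the paper gives only the one-line sketch ``induction on the weight of a basic commutator and the previous identities,'' and your write-up fills in exactly that induction — observing that $\phi_n$ preserves the lower central series, reducing $[x^{n^{k_1}}\alpha,y^{n^{k_2}}\beta]$ to $[x^{n^{k_1}},y^{n^{k_2}}]$ modulo $\gamma_{r+1}$, and then proving bilinearity $[x^s,y^t]\equiv[x,y]^{st}$ via the identities (\ref{commident}) together with $[\gamma_i(F_m),\gamma_j(F_m)]\subset\gamma_{i+j}(F_m)$ and the centrality of $\gamma_r(F_m)/\gamma_{r+1}(F_m)$.
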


\begin{lemma} \label{comm2} Let $F_m$ be the free group on $\{x_1,\ldots,x_m\}$, $m \geq 2$, and $w_0 \in [F_m,F_m]$. Suppose that $w \in F_m$ is an element which is a product of $\ell(w)$ elements in $\big\{x_{1}^{\pm 1},\ldots, x_{m}^{\pm 1}\big\}$.

\noindent \textup{(i)} The commutator $[w,w_0]$ is a product commutators of the form \begin{equation} \label{comm0} \Big[x_{i_1}^{\pm 1},\big[\ldots ,\big[x_{i_{r-1}}^{\pm 1},[x_{i_r}^{\pm 1},w_0\big ]^{\pm 1}\big]^{\pm 1}, \ldots \big]^{\pm 1}\Big]^{\pm 1}\in \gamma_{r+2}(F_m) \end{equation} where $1 \leq r \leq \ell(w)$ and $1 \leq i_1,\ldots, i_{r} \leq m$. In this product decomposition of $[w,w_0]$ the number of commutators of the form \textup{(}\ref{comm0}\textup{)}, with $r\in \mathbb{N}$ fixed, is equal to $\binom{\ell(w)}{r}$.
\medskip

\noindent \textup{(ii)} Fix an integer $p \geq 4$. There exists a constant $C>0$, depending only on $p\in \mathbb{N}$ and $w_0 \in [F_m,F_m]$, with the property: we may write $$[w,w_0]\gamma_{p}(F_m)=\widetilde{w}_1\cdots \widetilde{w}_M\gamma_{p}(F_m)$$ where $\widetilde{w}_1,\ldots, \widetilde{w}_M \in \bigcup_{i=3}^{p-1}\mathcal{E}_{i}^{\pm1}$ and for every $3 \leq t \leq p-1$ we have $$\textup{card}\big\{i\in [1,M]\cap \mathbb{Z}: \widetilde{w}_i\in \mathcal{E}_{t}^{\pm1} \big\} \leq C\ell(w)^{t-2}.$$\end{lemma}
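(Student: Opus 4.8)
The plan is to establish part (i) by induction on $\ell(w)$ using the commutator identities (\ref{commident}), and then derive part (ii) from part (i) by rewriting each of the commutators (\ref{comm0}) in terms of basic commutators modulo $\gamma_p(F_m)$.

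For part (i), first I would treat the base case $\ell(w)=1$, i.e. $w=x_i^{\pm 1}$: then $[w,w_0]$ is literally a single commutator of the form (\ref{comm0}) with $r=1$, and $\binom{1}{1}=1$, so the claim holds. For the inductive step, write $w=w'x_i^{\pm 1}$ with $\ell(w')=\ell(w)-1$. Applying the first identity in (\ref{commident}) (in the form $[ac,b]=[c,[a,b]^{-1}][a,b][c,b]$ with $a=w'$, $c=x_i^{\pm 1}$, $b=w_0$), we get
$$[w,w_0]=[w'x_i^{\pm1},w_0]=\big[x_i^{\pm1},[w',w_0]^{-1}\big]\,[w',w_0]\,[x_i^{\pm1},w_0].$$
By the inductive hypothesis $[w',w_0]$ is a product of $\binom{\ell(w)-1}{r}$ commutators of the form (\ref{comm0}) for each $r$ between $1$ and $\ell(w)-1$. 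The middle factor $[w',w_0]$ already has the right form. The last factor $[x_i^{\pm1},w_0]$ is a commutator of the form (\ref{comm0}) with $r=1$. For the first factor $\big[x_i^{\pm1},[w',w_0]^{-1}\big]$, I would substitute the product decomposition of $[w',w_0]$, expand the commutator of $x_i^{\pm1}$ against a product using the identities (\ref{commident}) repeatedly (each application prepends an $x_i^{\pm1}$-bracket, raising the weight by one and keeping the nested form), and collect terms: a commutator of weight $r$ appearing in $[w',w_0]$ produces a commutator of weight $r+1$ of the form (\ref{comm0}) after bracketing with $x_i^{\pm1}$. Counting: the number of weight-$r$ commutators in $[w,w_0]$ is (those already in $[w',w_0]$) plus (those of weight $r$ coming from bracketing weight-$(r-1)$ terms of $[w',w_0]$ with $x_i^{\pm1}$) plus (the single $r=1$ term $[x_i^{\pm1},w_0]$ when $r=1$), giving $\binom{\ell(w)-1}{r}+\binom{\ell(w)-1}{r-1}=\binom{\ell(w)}{r}$ by Pascal's identity, which closes the induction. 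Here one uses $[\gamma_r,\gamma_d]\subset\gamma_{r+d}$ and $w_0\in[F_m,F_m]=\gamma_2(F_m)$ to confirm each term (\ref{comm0}) with $r$ fixed lies in $\gamma_{r+2}(F_m)$, so in particular all terms with $r\geq p-2$ vanish modulo $\gamma_p(F_m)$.

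For part (ii), fix $p\geq 4$ and work modulo $\gamma_p(F_m)$. By part (i), $[w,w_0]\gamma_p(F_m)$ is a product of commutators (\ref{comm0}) with $1\le r\le p-3$ (terms with $r\geq p-2$ lie in $\gamma_p(F_m)$ and are dropped), with exactly $\binom{\ell(w)}{r}$ of them for each such $r$. Now each commutator of the form (\ref{comm0}) with parameter $r$ lies in $\gamma_{r+2}(F_m)$, hence by Magnus' theorem its image in $F_m/\gamma_p(F_m)$ can be rewritten as a word in basic commutators of weights between $r+2$ and $p-1$; crucially, the number of basic-commutator letters needed and the exponents involved are bounded by a constant depending only on $p$ and $w_0$ (not on $\ell(w)$), since (\ref{comm0}) involves $w_0$ and $r\le p-3$ letters $x_{i_j}^{\pm1}$ with $r$ bounded in terms of $p$, so there are only finitely many possibilities for the ``shape'' of (\ref{comm0}) up to the indices, and each fixed shape has a fixed rewriting modulo $\gamma_p$. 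Let $C_0$ bound the number of basic-commutator letters in the rewriting of any single (\ref{comm0}). A weight-$t$ basic commutator in $\mathcal E_t^{\pm1}$ can only arise from rewriting a commutator (\ref{comm0}) with parameter $r\le t-2$, and there are $\sum_{r=1}^{t-2}\binom{\ell(w)}{r}\le (t-2)\binom{\ell(w)}{t-2}\le C_1\,\ell(w)^{t-2}$ such commutators (with $C_1$ depending only on $p$). Hence the total count of weight-$t$ basic-commutator letters is at most $C_0 C_1\,\ell(w)^{t-2}=:C\,\ell(w)^{t-2}$, which is the claimed bound. Assembling all these rewritings into a single product $\widetilde w_1\cdots\widetilde w_M$ with $\widetilde w_i\in\bigcup_{i=3}^{p-1}\mathcal E_i^{\pm1}$ finishes the proof.

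The main obstacle I anticipate is the bookkeeping in part (i): expanding $\big[x_i^{\pm1},[w',w_0]^{-1}\big]$ when $[w',w_0]$ is itself a long product requires iterating the identities (\ref{commident}) carefully and checking that the resulting extra terms are again of the nested form (\ref{comm0}) — in particular, that bracketing $x_i^{\pm 1}$ against a product of commutators, modulo the appropriate term of the lower central series, contributes exactly one nested commutator per factor (of weight one higher) plus lower-order corrections that either vanish mod $\gamma_p$ or can be absorbed. Getting the Pascal-triangle count to come out exactly (rather than with an inequality) is the delicate point, and it relies on organizing the expansion so that cross terms land in higher terms of the lower central series. In part (ii) the only subtlety is making the constant $C$ genuinely independent of $\ell(w)$, which follows because $r$ is bounded by $p-3$ and $w_0$ is fixed, so only finitely many rewriting patterns occur.
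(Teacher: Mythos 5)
Part (i), as you set it up, has a genuine gap. After writing $[w,w_0]=[w'x_i^{\pm1},w_0]=[x_i^{\pm1},[w',w_0]^{-1}]\,[w',w_0]\,[x_i^{\pm1},w_0]$ you need to expand $[x_i^{\pm1},[w',w_0]^{-1}]$ where $[w',w_0]^{-1}=c_N^{-1}\cdots c_1^{-1}$ is a product of commutators of the form (\ref{comm0}). Iterating the second identity of (\ref{commident}) against this product does not yield only the terms $[x_i^{\pm1},c_j^{-1}]$: it also produces cross-terms of the shape $\big[c_{j-1}^{-1}\cdots c_1^{-1},\,[x_i^{\pm1},c_j^{-1}]^{-1}\big]$, whose left slot is a product of $c_j$'s rather than a single $x_k^{\pm1}$. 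These are not of the form (\ref{comm0}), and they do not vanish in $F_m$. Part (i) asserts an \emph{exact} product decomposition with count \emph{equal to} $\binom{\ell(w)}{r}$, not a decomposition modulo $\gamma_p(F_m)$, so the ``vanish mod $\gamma_p$ / can be absorbed'' escape hatch you invoke is not available, and the Pascal count is destroyed by the unaccounted cross-terms. The repair is to use the other form of the same identity: $[w'y,w_0]=y^{-1}[w',w_0]\,y\,[y,w_0]$ with $y=x_i^{\pm1}$. Since conjugation is a homomorphism, $y^{-1}[w',w_0]y=\prod_j y^{-1}c_j y$, and $y^{-1}c_j y=c_j[c_j,y]=c_j[y,c_j]^{-1}$. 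Both $c_j$ and $[y,c_j]^{-1}$ are of the form (\ref{comm0}), of parameters $r_j$ and $r_j+1$ respectively, with no extraneous terms. Each factor of $[w',w_0]$ spawns exactly one new factor, and together with the extra $[y,w_0]$ (parameter $1$) the count is $\binom{\ell(w)-1}{r}+\binom{\ell(w)-1}{r-1}=\binom{\ell(w)}{r}$ by Pascal, as you anticipated but now without loose ends.

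Part (ii) is essentially the paper's argument and is sound: there are finitely many commutators of the form (\ref{comm0}) with $r\leq p-2$, each has a fixed bounded rewriting in basic commutators modulo $\gamma_p(F_m)$, and one sums over the at most $\binom{\ell(w)}{r}$ occurrences. One small slip: the inequality $\sum_{r=1}^{t-2}\binom{\ell(w)}{r}\leq(t-2)\binom{\ell(w)}{t-2}$ is false in general (for instance $\ell(w)=4$, $t-2=3$: $4+6+4>3\cdot4$); replace it with $\binom{\ell(w)}{r}\leq\ell(w)^{r}\leq\ell(w)^{t-2}$ for $1\leq r\leq t-2$, giving $\sum_{r=1}^{t-2}\binom{\ell(w)}{r}\leq(t-2)\ell(w)^{t-2}$, which still yields a constant depending only on $p$ and $w_0$.
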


\begin{proof} \textup{(i)} This part follows by using induction on $r\in \mathbb{N}$ and the commutator identities (\ref{commident}).
\medskip

\noindent \textup{(ii)} For $1 \leq s \leq p-2$, let $\mathcal{A}_{p}^{s}\subset \gamma_{s+2}(F_m)$ be the set of all commutators of the form (\ref{comm0}). By using (i) we may write $$[w,w_0]\gamma_p(F_m)=w_1\cdot \cdot \cdot w_{n}\gamma_p(F_m),$$ $w_1,\ldots,w_n \in \bigcup_{r=1}^{p-2}\mathcal{A}_{p}^{r}$ such that for every $1 \leq r \leq p-2$ the cardinality of the set $\big \{i\in [0,n]\cap \mathbb{Z}: w_{i} \in \mathcal{A}_{p}^{r}\big \}$ is at most equal to $\binom{\ell(w)}{r}$. Note that $\mathcal{A}_{p}^{s}$ is a finite subset of $F_m$, so there exists $C_0=C_0(p,w_0)>1$ with the property: if $w_r \in \mathcal{A}_p^{s}$, for some $r$, we may write $w_r=\widetilde{w}_{r1}\cdots \widetilde{w}_{rd_r}\omega_{r}$ for some $\widetilde{w}_{r1},\ldots, \widetilde{w}_{r d_r}\in \bigcup_{j=s+2}^{p-1}\mathcal{E}_{j}^{\pm 1}$, $\omega_r \in \gamma_p(F_m)$ and $d_r \leq C_0$. Therefore, the total number of elements in $\bigcup_{i=3}^{t}\mathcal{E}_{i}^{\pm1}$ from the decompositions of $\big \{\widetilde{w}_{r1}\cdots \widetilde{w}_{rd_r}\omega_{r}:  1 \leq r \leq n\big \}$ is \hbox{at most equal to} $$\sum_{i=1}^{t-2} \sum_{\{r: w_r \in \mathcal{A}_{p}^{i}\}}C_0 \leq \sum_{i=1}^{t-2} C_0\binom{\ell(w)}{i} < tC_0 \ell(w)^{t-2}.$$ Now the conclusion follows by observing that $\prod_{r=1}^{n}\big(\widetilde{w}_{r1}\cdots \widetilde{w}_{rd_r}\big)\gamma_{p}(F_m)=[w,w_0]\gamma_p(F_m)$.\end{proof}

We also need the following lemma which is the content of \cite[Lem. 1]{Hidber}.

\begin{lemma} \label{rewrite} \textup{(Hidber \cite[Lem. 1]{Hidber})} Let $p \geq 3$ and $\mathcal{E}_i=\{e_{i1},\ldots, e_{in_i}\}$ be the set of basic commutators of weight $1 \leq i \leq p-1$. Suppose that $w \in \gamma_{i}(F_m)$ and $w\gamma_p(F_m)=w_1\cdot \cdot \cdot w_n \gamma_p(F_m)$ where $w_{1},\ldots,w_n \in \bigcup_{j=i}^{p-1} \mathcal{E}_{i}^{\pm 1}$. For every $i \leq j \leq p-1$ set \hbox{$L_j:=\textup{card}\big \{r\in [0,n]\cap \mathbb{Z}: w_{r}\in \mathcal{E}_j^{\pm 1}\big \}.$} Then we can write $$w\gamma_p(F_m)=e_{i1}^{s_1}\cdot \cdot \cdot e_{in_i}^{s_{n_i}} w_{1}'w_{2}'\cdot \cdot \cdot w_{R}' \gamma_p(F_m)$$ for some $s_1,\ldots,s_{n_i} \in \mathbb{Z}$ and $w_{1}',\ldots,w_{R}'\in \bigcup_{j=i+1}^{p-1}\mathcal{E}_{j}^{\pm 1}$. Moreover, there is $D>1$, depending only on $m,p \in \mathbb{N}$, such that $$\textup{card}\big \{r \in [1,R]\cap \mathbb{Z}: w_{r}'\in \mathcal{E}_{j}^{\pm 1} \big \}\leq \sum_{q=0}^{\left \lfloor \frac{j-1}{i}  \right \rfloor} D^q L_{i}^{q} L_{j-iq}.$$ 
\end{lemma}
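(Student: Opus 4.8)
The plan is to run Philip Hall's commutator collection process, sweeping every basic commutator of weight $i$ that occurs in the product $w_1\cdots w_n$ to the front of the word. The elementary move is the following: whenever the current word contains a pattern $\cdots c\,b\cdots$ with $b\in\mathcal{E}_i$ a weight-$i$ basic commutator and $c$ a basic commutator of weight $\geq i$ immediately to its left, rewrite it by the identities (\ref{commident}), i.e. $c\,b=b\,c\,[c,b]$. Since $c$ has weight $\geq i$, the factor $[c,b]$ lies in $\gamma_{i+w(c)}(F_m)\subseteq\gamma_{i+1}(F_m)$; in particular no basic commutator of weight $\leq i$ is ever produced, so no weight-$i$ letter can stall, and the process terminates modulo $\gamma_p(F_m)$. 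Once all the weight-$i$ letters have been gathered at the front, Magnus's theorem \cite{Magnus} (the group $A_i^m=\gamma_i(F_m)/\gamma_{i+1}(F_m)$ is free abelian on the classes of the weight-$i$ basic commutators) allows us to replace the gathered block by the canonical word $e_{i1}^{s_1}\cdots e_{in_i}^{s_{n_i}}$, where $(s_1,\dots,s_{n_i})$ are the coordinates of the class of $w$ in $A_i^m$; the reordering error between the two blocks lies in $\gamma_{2i}(F_m)\subseteq\gamma_{i+1}(F_m)$ and is pushed into what follows. This produces the asserted normal form $e_{i1}^{s_1}\cdots e_{in_i}^{s_{n_i}}w_1'\cdots w_R'\gamma_p(F_m)$ with every $w_r'$ of weight $\geq i+1$.

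For the quantitative part I would first record the elementary fact that there is a constant $K=K(m,p)>1$ such that any element of $\gamma_\ell(F_m)$, with $\ell\geq i+1$, can be written modulo $\gamma_p(F_m)$ as a product of basic commutators of weights in $[\ell,p-1]$ using at most $K$ of each weight; applied to each factor $[c,b]\in\gamma_{i+w(c)}(F_m)$ created by one elementary move, this shows that passing a weight-$i$ letter across a weight-$j'$ letter introduces at most $K$ new basic commutators of each weight $j\in[i+j',p-1]$ and none of weight $\leq j'$ or $\leq i$. Let $N_j$ be the total number of weight-$j$ basic commutators that occur at any stage of the entire collection, those later cancelled included. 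Since at most $L_i$ weight-$i$ letters ever have to be moved, each of them crosses any other letter at most once, and the word remains inside $\gamma_i(F_m)$ (so only weights $\geq i$ appear), one is led to a recursion of the form
\begin{equation*}
N_i=L_i,\qquad N_j\ \leq\ L_j+D_0\,L_i\sum_{i\leq l\leq j-i}N_l\qquad(i+1\leq j\leq p-1),
\end{equation*}
with $D_0=D_0(m,p)$. As every $w_r'$ is in particular one of the basic commutators counted by $N_j$, we get $\textup{card}\{r:w_r'\in\mathcal{E}_j^{\pm1}\}\leq N_j$, so it suffices to solve this recursion.

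Finally, I would prove by induction on $j$ that $N_j\leq\sum_{q=0}^{\lfloor(j-1)/i\rfloor}D^qL_i^qL_{j-iq}$ for a suitable $D=D(m,p)$; the case $L_i=0$ is immediate, since then nothing is collected and $N_j=L_j$, so one may assume $L_i\geq1$. Substituting the inductive bounds for $N_i,\dots,N_{j-i}$ into the recursion and summing the ensuing geometric progressions in $L_i$, one bounds $N_j$ by $L_j$ plus a sum of terms $D^qL_i^qL_{m'}$ with seed weights $m'\in[i,j-i]$; the terms with $m'\equiv j\pmod i$ are precisely those of the claimed bound, and the remaining ones can be absorbed into it after enlarging $D$, using that $L_i^a\leq L_i^b$ for $a\leq b$. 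The main obstacle is exactly this last step: one must organise the order in which the weight-$i$ letters, and the higher-weight commutators they spawn, are collected so that the bookkeeping genuinely closes into a recursion of the above shape with a blow-up constant $D$ uniform in $w$ and in the chosen initial decomposition $w_1\cdots w_n$ — the underlying algebraic identities being routine consequences of (\ref{commident}) and Magnus's theorem.
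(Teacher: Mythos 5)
The paper does not give its own proof of this lemma; it cites it as Lemma~1 of Hidber \cite{Hidber}, so there is no in‑text argument to compare against. Your strategy — Hall's commutator collection, plus Magnus's theorem for the weight‑$i$ block — is certainly the right and standard one, and is almost surely what Hidber does. The qualitative part of your sketch (existence of the normal form, with the reorganisation error pushed into $\gamma_{i+1}(F_m)$) is fine.

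The quantitative half, however, does not close, and the obstruction you flag at the end is a real one, not a matter of bookkeeping. Unwinding the recursion you propose,
\begin{equation*}
N_j\ \leq\ L_j\;+\;D_0\,L_i\sum_{l=i}^{j-i}N_l,
\end{equation*}
with the inductive hypothesis $N_l\leq\sum_{q=0}^{\lfloor(l-1)/i\rfloor}D^qL_i^qL_{l-iq}$, produces
\begin{equation*}
N_j\ \leq\ L_j\;+\;D_0\sum_{q\geq 1}D^{q-1}L_i^{q}\sum_{m'=i}^{\,j-qi}L_{m'} ,
\end{equation*}
and the inner sum $\sum_{m'=i}^{j-qi}L_{m'}$ contains contributions $L_{m'}$ with $m'$ \emph{not} congruent to $j$ modulo $i$. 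Those terms have no counterpart whatsoever in the target $\sum_{q}D^qL_i^qL_{j-iq}$ (whose only $L$-factors are the $L_{j-iq}$), and your proposed absorption device, $L_i^a\leq L_i^b$ for $a\leq b$, only reshuffles powers of $L_i$; it cannot trade $L_{m'}$ for $L_{m''}$ when $m'\neq m''$, since these quantities are completely independent. So the recursion as stated does not imply the claimed bound.

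The source of the trouble is the step where you decompose $[c,b]$ modulo $\gamma_p(F_m)$ into ``at most $K$ basic commutators of each weight in $[\,w(c)+i,\,p-1\,]$.'' That statement is true, but it is exactly what introduces contributions of \emph{every} weight $\geq w(c)+i$, not just weight $w(c)+i$; for $w(c)>i$ the element $[c,b]$ need not be a basic commutator, and Jacobi/Hall–Witt rewriting generically produces higher‑weight error terms. The congruence‑class structure of the target bound (only $L_{j-iq}$ appears) is therefore not respected by your bookkeeping, and one needs a finer organisation of the collection — e.g.\ separating, at each crossing, the weight‑$(w(c)+i)$ part of $[c,b]$ from its higher‑weight tail and propagating the tails differently, or working with compound commutators and decomposing into basics only at the very end with a careful count — precisely the point you yourself identify as the missing piece. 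To complete the proof you would have to supply that organisation, or consult Hidber's argument directly.
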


Now let us fix a word length function $|\cdot|_{A_{p}^m}:A_{p}^m\rightarrow \mathbb{N}$ on the free abelian group $A_{p}^m=\gamma_p(F_m)/\gamma_{p+1}(F_m)$. We shall use the previous two lemmas to prove the following lemma.

\begin{lemma} \label{estimate1} Let $p \geq 4$ and $m \geq 2$ be integers and $\mathcal{B}$ be a finite subset of $[F_m,F_m]$. There exists $c>0$, depending only on $p,m\in \mathbb{N}$ and $\mathcal{B}$, with the following property: if $w \in \gamma_{p}(F_m)$ is written as a product of the form $w=\prod_{i=1}^{M} z_{i}^{-1}w_iz_{i}$ where $w_1,\ldots,w_M \in \mathcal{B}$, then $$\max \big \{ M,|z_1|_{F_m}, \ldots, |z_{M}|_{F_m} \big\} \geq c \big| w \gamma_{p+1}(F_m)\big|_{A_{p}^m}^{\frac{1}{p-1}}.$$\end{lemma}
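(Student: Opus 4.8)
\textbf{Proof strategy for Lemma \ref{estimate1}.}
The plan is to bound, in terms of $M$ and the lengths $|z_i|_{F_m}$, the coordinates of $w\gamma_{p+1}(F_m)$ with respect to the free abelian basis of $A_p^m$ given by (the cosets of) the basic commutators $\mathcal{E}_p$ of weight $p$. Write $w=\prod_{i=1}^M z_i^{-1}w_iz_i$ with $w_i\in\mathcal B$; set $L:=\max\{M,|z_1|_{F_m},\ldots,|z_M|_{F_m}\}$. Since each $w_i$ lies in the finite set $\mathcal B\subset[F_m,F_m]=\gamma_2(F_m)$, after fixing $p$ each conjugate $z_i^{-1}w_iz_i$ can be analyzed via Lemma \ref{comm2}: indeed $z_i^{-1}w_iz_i=w_i\cdot[w_i,z_i]$, and Lemma \ref{comm2}(ii) expresses $[w_i,z_i]\gamma_p(F_m)$ as a product of at most $C|z_i|_{F_m}^{t-2}$ elements of $\mathcal{E}_t^{\pm1}$ for each $3\le t\le p-1$ (with $C$ depending only on $p$ and the finite set $\mathcal B$, since $w_i$ ranges over $\mathcal B$). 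Concatenating over $i=1,\ldots,M$, we obtain
$$
w\gamma_p(F_m)=u_1u_2\cdots u_n\,\gamma_p(F_m),\qquad u_1,\ldots,u_n\in\bigcup_{j=2}^{p-1}\mathcal{E}_j^{\pm1},
$$
where the number of factors of weight $t$ is $O(M\cdot L^{t-2})=O(L^{t-1})$ for $2\le t\le p-1$ (the weight-$2$ count is bounded by a constant times $M\le L$, coming from the $w_i$'s themselves; higher weights come from the commutators $[w_i,z_i]$).

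Next I would feed this product decomposition into Hidber's rewriting lemma (Lemma \ref{rewrite}), applied successively for $i=2,3,\ldots,p-1$, to push all basic commutators of weight $<p$ to the left in "collected" powered form and thereby isolate the weight-$p$ contribution. At stage $i$, if $L_j$ denotes the current count of weight-$j$ factors, Lemma \ref{rewrite} produces a new decomposition in which the weight-$j$ count (for $j>i$) is at most $\sum_{q=0}^{\lfloor (j-1)/i\rfloor} D^q L_i^q L_{j-iq}$. Starting from $L_j=O(L^{j-1})$ and tracking the exponents inductively, one checks that the bound $L_j=O(L^{j-1})$ is preserved at every stage: a term $L_i^q L_{j-iq}$ contributes exponent $q(i-1)+(j-iq-1)=j-1-q$, which is at most $j-1$. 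Hence after all the rewriting steps the number of surviving weight-$p$ basic commutators (equivalently, the $\ell^1$-norm of the exponent vector of $w\gamma_{p+1}(F_m)$ in the basis $\mathcal E_p$) is $O(L^{p-1})$. Since $|\cdot|_{A_p^m}$ is a word length on the finitely generated free abelian group $A_p^m$ with the (finite) basis $\mathcal E_p$ as generating set, this $\ell^1$-norm is comparable to $|w\gamma_{p+1}(F_m)|_{A_p^m}$, and we conclude $|w\gamma_{p+1}(F_m)|_{A_p^m}\le c^{-1}L^{p-1}$, i.e. $L\ge c\,|w\gamma_{p+1}(F_m)|_{A_p^m}^{1/(p-1)}$, as desired.

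\textbf{Main obstacle.} The delicate point is the exponent bookkeeping through the iterated application of Lemma \ref{rewrite}: one must verify that the "collection process" never inflates the weight-$p$ count beyond $O(L^{p-1})$, i.e. that the worst term in each of the recursions $L_j\mapsto\sum_q D^q L_i^q L_{j-iq}$ is exactly the linear-in-$L_j$ term and that the cross terms $L_i^q L_{j-iq}$ have strictly smaller $L$-exponent. This requires a clean induction on the pair $(i,j)$ and care about which factors are already "collected" and hence frozen. A secondary (routine) point is justifying that the constants from Lemma \ref{comm2}(ii) are uniform as $w_0$ ranges over the finite set $\mathcal B$ — this is immediate by taking the maximum — and that conjugation $z_i^{-1}w_iz_i=w_i[w_i,z_i]$ lets us invoke Lemma \ref{comm2} with $\ell(z_i)=|z_i|_{F_m}$.
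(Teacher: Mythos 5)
Your proposal is correct and follows essentially the same route as the paper: decompose each conjugate via the commutator identity and Lemma \ref{comm2}(ii) to get initial weight-$t$ counts $O(L^{t-1})$, then iterate Hidber's Lemma \ref{rewrite}, checking exactly as you do that the cross terms $L_i^q L_{j-iq}$ have exponent $j-1-q\leq j-1$ so the bound propagates. The only cosmetic issues are minor: you should apply Lemma \ref{comm2} to $[z_i,w_i]$ rather than $[w_i,z_i]$ (inverse, same count), and the computation should be carried out modulo $\gamma_{p+1}(F_m)$ throughout so that weight-$p$ commutators are retained — but this is precisely what the paper does.
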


\begin{proof} Let us set $R_{M}:=\max \big \{ M,|z_1|_{F_m}, \ldots, |z_{M}|_{F_m} \big\}$. We may write \begin{equation} \begin{split} w \gamma_{p+1}\big(F_m\big)=\big[z_{1},w_1^{-1}\big]w_1 \big[z_2,w_2^{-1}\big] w_2 \cdot \cdot \cdot [z_M,w_{M}^{-1}\big]w_M \gamma_{p+1}(F_m). \end{split}\end{equation} By Lemma \ref{comm2} (ii) there exists a constant $C>0$, depending only on the finite set $\mathcal{B}$ and $p\in \mathbb{N}$, with the following properties:

\medskip

\noindent \textup{(i)} for each $1 \leq j \leq M$, we may write $[z_{j},w_{j}^{-1}]\gamma_{p+1}(F_m)=\widetilde{z_j}\gamma_{p+1}(F_m)$ such that $\widetilde{z}_{j}$ is a product of elements in $\bigcup_{r=3}^{p-1}\mathcal{E}_{r}^{\pm1}$. In this product decomposition of $\widetilde{z}_j$, for $3 \leq t \leq p-1$, the number of elements in $\mathcal{E}_{t}^{\pm1}$ is at most equal to $C\big|z_i\big|_{F_m}^{t-2}$.

\noindent \textup{(ii)} for each $1 \leq j \leq M$, we may write $w_j \gamma_{p+1}(F_m)=\widetilde{w}_j\gamma_{p+1}(F_m)$ so that $\widetilde{w}_j$ is a product of at most $C>0$ elements in $\bigcup_{r=2}^{p-1}\mathcal{E}_{r}^{\pm1}$.
\medskip

In particular, by using (i) and (ii), we may write $w\gamma_{p+1}(F_m)=v_{12}\cdot\cdot \cdot v_{d_2 2}\gamma_{p+1}(F_m),$ where $v_{12},\ldots, v_{d_22} \in \bigcup_{r=2}^{p-1}\mathcal{E}_r^{\pm1}$ and the total number of elemets in $\mathcal{E}_{t}^{\pm1}$ \hbox{satisfies the upper bound} \begin{align} \label{numberofcomm} L_{t,2}&:=\textup{card} \big \{i \in [0,d_2]\cap \mathbb{Z}:v_{i2} \in \mathcal{E}_{t}^{\pm 1}\big \}\leq  CM+\sum_{i=1}^{s} C\big|z_i\big|_{F_m}^{t-2}\\ \nonumber &\leq CM+\sum_{i=1}^{M} CR_{M}^{t-2}\leq 2C R_M^{t-1}. \end{align}

Now we repeatedly apply Lemma \ref{rewrite} to the coset $w\gamma_{p+1}(F_m)$. By applying Lemma \ref{rewrite} to $w\gamma_{p+1}(F_m)=v_{12}\cdot\cdot \cdot v_{d_2 2}\gamma_{p+1}(F_m)$ and using the fact that $w \in \gamma_{p}(F_m)$, we may write $$w\gamma_{p+1}(F_m)=v_{13}\cdot \cdot \cdot v_{d_3 3}\gamma_{p+1}(F_m)$$ where $v_{13},\ldots,v_{d_3 3}\in \bigcup_{j=3}^{p}\mathcal{E}_{j}^{\pm 1}$ and for every $3 \leq t \leq p$, $L_{t,3}:=\textup{card} \big \{ i \in [0,d_3]\cap \mathbb{Z}:v_{i3}\in \mathcal{E}_{t}^{\pm1}\big \}$ satisfies the upper bound \begin{align*} L_{t,3} \leq \sum_{q=0}^{\left \lfloor \frac{t-1}{2}  \right \rfloor}D^q L_{2,2}^{q}L_{t-2q,2}&\leq \sum_{q=0}^{\left \lfloor \frac{t-1}{2} \right \rfloor} D^q \big(2CR_{M}\big)^{q} 2CR_{M}^{t-2q-1}\\ & \leq 2C R_M^{t-1} \sum_{q=0}^{\left \lfloor \frac{t-1}{2}  \right \rfloor}(2CDR_M^{-1})^q\leq \big(2DC\big)^{\frac{p}{2}}R_{M}^{t-1}.\end{align*} By continuing inductively and using Lemma \ref{rewrite}, we deduce that for every $3 \leq r \leq p$ \hbox{we can write} $$w\gamma_{p+1}(F_m)=v_{1r}\cdot \cdot \cdot v_{d_r r} \gamma_{p+1}(F_m),$$ where $v_{1r},\ldots,v_{d_r r}\in \bigcup_{j=r}^{p}\mathcal{E}_{j}^{\pm 1}$, and for $r \leq t \leq p$, $L_{t,r}:=\textup{card} \big \{i \in [0,d_r]\cap \mathbb{Z}: v_{i r}\in \mathcal{E}_{i}^{\pm 1}\big \}$ satisfies the upper bound: \begin{align*} L_{t,r}& \leq \sum_{q=0}^{\left \lfloor \frac{t-1}{r-1} \right \rfloor} D^q L_{r-1,r-1}^{q} L_{t-q(r-1),r-1}\leq \sum_{q=0}^{\left \lfloor \frac{t-1}{r-1}  \right \rfloor} D^q \big(C_{r-1}R_M^{r-2}\big)^{q}\big(C_{r-1}R_M^{t-(r-1)q-1}\big)\\ &\leq C_{r-1}R_M^{t-1}\sum_{q=0}^{\left \lfloor \frac{t-1}{r-1}  \right \rfloor} \big(DC_{r-1}R_M^{-1}\big)^{q}\leq R_M^{t-1}C_{r},\end{align*} where $C_{r}:=(DC_{r-1})^{\frac{p}{2}}$ and $C_{2}=2C$. It follows that we can write $$w \gamma_{p+1}(F_m)=v_{1p}\cdot\cdot \cdot v_{d_{p} p}\gamma_{p+1}(F_m)$$ where $v_{1p},\ldots,v_{d_p p}\in \mathcal{E}_{p}^{\pm 1}$ and $$d_{p}\leq C_p R_{M}^{p-1}\leq (2C)^{(\frac{p}{2})^{p-2}}D^{(1+\frac{p}{2})^{p-1}}R_{M}^{p-1}.$$ In particular, set $c_1:=\max \big\{ \big|g\gamma_{p+1}(F_m)\big|_{A_{p}^m}: g \in \mathcal{E}_{p}^{\pm1} \big\}$, note that $\big| w\gamma_{p+1}(F_m)\big|_{A_{p}^m}\leq d_p c_1$, hence \begin{equation}R_M \geq c\big| w\gamma_{p+1}(F_m)\big|_{A_{p}^m}^{\frac{1}{p-1}} \end{equation} where $c:=c_1^{-\frac{1}{p-1}}(2C)^{-\frac{1}{p-1}(\frac{p}{2})^{p-2}}D^{-\frac{1}{p-1}(1+\frac{p}{2})^{p-1}}$. The proof of the lemma is complete. \end{proof}
\medskip

By using Lemma \ref{estimate1} we obtain the following lemma that we need for the proof of \hbox{Theorem \ref{nonqie1}.}

\begin{lemma}\label{estimate2} Let \hbox{$v=\big[b_1,b_2\big]$} be a basic commutator of weight $r \geq 2$, where $b_1,b_2\in F_m$ are basic commutators with $b_2<b_1$ and $w(b_1)+w(b_2)=r$. Suppose that $\mathcal{A}$ is a finite subset of $[F_m,F_m]$ such that the normal subgroup $\langle \langle \mathcal{A}\rangle \rangle$ of $F_m$ contains the element $v(x_1^n,\ldots,x_m^n)$ for every $n\in \mathbb{N}$. We consider the fiber product of $F_m$ \hbox{with respect to $\langle \langle \mathcal{A} \rangle \rangle$} $$\Delta_{\mathcal{A}}:=F_m \times_{\langle \langle \mathcal{A}\rangle \rangle} F_m=\Big \langle \big \{ (x_i,x_i), (1,w):1\leq i \leq m, w \in \mathcal{A}\big\}\Big \rangle$$ and fix a word length function $|\cdot|_{\mathcal{A}}:\Delta_{\mathcal{A}} \rightarrow \mathbb{N}$. Define the element of $\langle \langle \mathcal{A}\rangle \rangle$\textup{:} $$V_q\big(x_1^n,\ldots, x_m^n \big):=\Big[\big[v(x_1^n,\ldots,x_m^n),b_2(x_1^n,\ldots,x_m^n)\big],\underbrace{v(x_1^n,\ldots,x_m^n), \ldots, v(x_1^n,\ldots,x_m^n)}_{q-\textup{times}}\Big], \ \ n  \in \mathbb{N}.$$ There exists $C>0$ such that for every $n \in \mathbb{N}$ we have $$\Big| \big(1, V_q\big(x_1^n,\ldots, x_m^n\big)\big)\Big|_{\mathcal{A}}\geq Cn^{1+\varepsilon}$$ where $\varepsilon=\frac{1}{rq+r+w(b_2)-1}$.\end{lemma}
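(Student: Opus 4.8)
The plan is to bound from below the word length of $(1,V_q(x_1^n,\ldots,x_m^n))$ in $\Delta_{\mathcal{A}}$ by exploiting that $V_q(x_1^n,\ldots,x_m^n)$ lies in a high term of the lower central series of $F_m$ and that, when written using the generating set, any element of $\{1\}\times\langle\langle\mathcal{A}\rangle\rangle$ decomposes as a product of conjugates of elements of $\mathcal{A}$ whose number and conjugator lengths are controlled by the word length in $\Delta_{\mathcal{A}}$. First I would determine the exact weight of $V_q(x_1^n,\ldots,x_m^n)$ as a basic-commutator-type element: since $v$ has weight $r$, the element $[v,b_2]$ has weight $r+w(b_2)$, and each further bracketing with $v$ adds $r$, so $V_q$ has weight $p-1:=rq+r+w(b_2)-1$; set $p:=rq+r+w(b_2)$, so $V_q(x_1,\ldots,x_m)\in\gamma_{p-1}(F_m)$. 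Then I would invoke Fact \ref{bci}: substituting $x_i\mapsto x_i^n$ multiplies the exponent in $A_{p-1}^m=\gamma_{p-1}(F_m)/\gamma_p(F_m)$ by $n^{p-1}$, provided $V_q(x_1,\ldots,x_m)$ is nontrivial modulo $\gamma_p(F_m)$ — this nonvanishing is where I would need to use that $v=[b_1,b_2]$ is genuinely a basic commutator and that basic commutators of weight $p-1$ form a free basis of $A_{p-1}^m$ (Magnus's theorem), together with the observation that $V_q$ is, up to reordering of brackets covered by the basic-commutator collection process, a basic commutator of that weight that actually appears. Hence $|V_q(x_1^n,\ldots,x_m^n)\gamma_p(F_m)|_{A_{p-1}^m}\geq c_0 n^{p-1}$ for a constant $c_0>0$ depending only on $v$.

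Next I would translate a word of length $\ell$ in $\Delta_{\mathcal{A}}$ representing $(1,V_q(x_1^n,\ldots,x_m^n))$ into a controlled product expression for $V_q(x_1^n,\ldots,x_m^n)$ in $F_m$. Writing an element of $\Delta_{\mathcal{A}}=\langle(x_i,x_i),(1,w):w\in\mathcal{A}\rangle$ of word length $\ell$ and projecting to the second coordinate after killing the first (i.e. reading the word in the quotient $\Delta_{\mathcal{A}}/\mathrm{diag}\cong \langle\langle\mathcal{A}\rangle\rangle$), one sees that $V_q(x_1^n,\ldots,x_m^n)=\prod_{i=1}^{M}z_i^{-1}w_i z_i$ with $w_i\in\mathcal{A}^{\pm1}$, $M\leq\ell$ and $|z_i|_{F_m}\leq\ell$ for every $i$; this is the standard fact that the second coordinate of $\{1\}\times\langle\langle\mathcal{A}\rangle\rangle$ is obtained by conjugating the relators by the images of the diagonal letters appearing between them, so $R_M:=\max\{M,|z_1|_{F_m},\ldots,|z_M|_{F_m}\}\leq |(1,V_q(x_1^n,\ldots,x_m^n))|_{\mathcal{A}}$. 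Then I would apply Lemma \ref{estimate1} with this $p$ (enlarging $\mathcal{A}$ to $\mathcal{A}^{\pm1}$, still finite) to the element $w:=V_q(x_1^n,\ldots,x_m^n)\in\gamma_{p-1}(F_m)\subset\gamma_{p-1}(F_m)$ — noting Lemma \ref{estimate1} is stated for $\gamma_p$ but applies verbatim with $p-1$ in place of $p$, or one applies it directly at level $p-1$ — to get $R_M\geq c_1|V_q(x_1^n,\ldots,x_m^n)\gamma_p(F_m)|_{A_{p-1}^m}^{1/(p-2)}$. Wait — I must be careful about which term of the central series appears in Lemma \ref{estimate1}: there it is $w\in\gamma_p(F_m)$ giving exponent $\frac{1}{p-1}$; so I apply it with its "$p$" equal to my $p-1$, obtaining the exponent $\frac{1}{p-2}=\frac{1}{rq+r+w(b_2)-2}$. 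Combining with the lower bound $|V_q(x_1^n,\ldots,x_m^n)\gamma_p(F_m)|_{A_{p-1}^m}\geq c_0 n^{p-1}$ yields $R_M\geq c_1(c_0 n^{p-1})^{1/(p-2)}=C n^{(p-1)/(p-2)}=Cn^{1+\frac{1}{p-2}}=Cn^{1+\varepsilon}$ with $\varepsilon=\frac{1}{rq+r+w(b_2)-2}$; reconciling this with the stated $\varepsilon=\frac{1}{rq+r+w(b_2)-1}$ just requires tracking the weight of $V_q$ correctly (the element $[v,b_2]$ lives in $\gamma_{r+w(b_2)}$, so $V_q\in\gamma_{rq+r+w(b_2)}$, i.e. $p-1=rq+r+w(b_2)$, and then Lemma \ref{estimate1} with its "$p$" $=p-1$ gives exponent $\frac{1}{p-2}=\frac{1}{rq+r+w(b_2)-1}$, matching the statement).

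The main obstacle I anticipate is the nonvanishing claim: showing that the coset $V_q(x_1,\ldots,x_m)\gamma_p(F_m)$ is a nontrivial element of the free abelian group $A_{p-1}^m$, equivalently that $V_q$ does not already collapse into $\gamma_p(F_m)$ after the basic-commutator collection process. This needs a careful argument with the collection procedure for basic commutators (the conditions (i)–(ii) in the definition, and Magnus's freeness result): one must verify that $[[v,b_2],v,\ldots,v]$, after collecting, has a nonzero coefficient on some weight-$(p-1)$ basic commutator. I would handle this by choosing $v=[b_1,b_2]$ with $b_2<b_1$ precisely so that $[v,b_2]=[[b_1,b_2],b_2]$ and its iterated brackets with $v$ are themselves basic commutators (conditions (i)–(ii) are exactly arranged for this), hence linearly independent in $A_{p-1}^m$, so their image is a primitive element and cannot vanish. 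Once this is secured, the rest is the bookkeeping of the two lemmas above. All other steps — the weight count, Fact \ref{bci}, the passage from $\Delta_{\mathcal{A}}$-word length to a conjugate-product expression, and the invocation of Lemma \ref{estimate1} — are routine.
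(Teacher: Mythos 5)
Your plan is correct and follows essentially the same route as the paper: set $\beta = rq+r+w(b_2)$, observe that $V_q$ is a basic commutator of weight exactly $\beta$ (precisely because $b_2<v<[v,b_2]$, which is the paper's one-line justification for the nonvanishing you rightly flag as the crux), so Fact \ref{bci} and Magnus's theorem give $\big|V_q(x_1^n,\ldots,x_m^n)\gamma_{\beta+1}(F_m)\big|_{A_\beta^m}\geq c_0 n^\beta$; then decompose a shortest $\Delta_{\mathcal{A}}$-word as a product of conjugates of elements of $\mathcal{A}^{\pm1}$ with length-controlled conjugators and apply Lemma \ref{estimate1} at level $p=\beta$ to combine. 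The only thing to tidy is the transient off-by-one in your initial weight count (you first place $V_q$ in $\gamma_{\beta-1}$ rather than $\gamma_\beta$), which you do catch and correct in the closing parenthetical; a clean write-up should track $w(V_q)=\beta$ from the start and apply Lemma \ref{estimate1} with its parameter equal to $\beta$ rather than juggling a shifted $p$.
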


\begin{proof} Without loss of generality we may assume that the word length function $|\cdot|_{\mathcal{A}}:\Delta_{\mathcal{A}}\rightarrow \mathbb{N}$ is induced by the finite generating subset $\{(x_i,x_i)^{\pm 1},(1,w)^{\pm 1}: 1 \leq i \leq m, w \in \mathcal{A}\}$ of $\Delta_{\mathcal{A}}$. Let us set $R_n:=\big| \big(1, V_q(x_1^n,\ldots, x_m^n)\big)\big|_{\mathcal{A}}$. There exist $a_1,\ldots,a_{s+1}\in F_m$ and $w_1,\ldots, w_{s}\in \mathcal{A}^{\pm1}$: \begin{equation} \label{fiberpr} \big(1,V_q\big(x_1^n,\ldots, x_m^n\big)\big)=\big(a_1,a_1 \big)\big(1,w_{1}\big)\cdot \cdot \cdot \big(a_s,a_s\big) \big(1,w_s\big) \big(a_{s+1},a_{s+1}\big)\end{equation} and also:

\noindent \textup{(i)} for $1 \leq i \leq s$, $a_i \in F_m$ is written as product of at most $\ell(a_i)$ elements in $\big\{x_1^{\pm1},\ldots,x_m^{\pm1}\big\}$,\\
\noindent \textup{(ii)} $a_1\cdot \cdot \cdot a_{s+1}=1$ and $w_1,\ldots, w_s \in \mathcal{A}^{\pm 1}$,\\
\noindent \textup{(iii)} $R_n=\sum_{i=1}^{s+1}\ell(a_i)+s$. 
\medskip

In particular, we may write \begin{equation} \begin{split} V_q\big(x_1^n,\ldots, x_m^n\big)=\prod_{i=1}^{s} \big(a_1\cdots a_i) w_i (a_1 \cdots a_i)^{-1}, \end{split}\end{equation} and we observe from (iii) that $s+\big|a_1\cdots a_i \big|_{F_m} \leq R_n$. Let us set $\beta:=rq+r+w(b_2)$ and observe that $V_q(x_1^n,\ldots,x_m^n)\in \gamma_{\beta}(F_m)$. By applying Lemma \ref{estimate1} for $p=\beta$, there exists a constant $C_1>0$, depending only on the finite set $\mathcal{A}$ and $q\in \mathbb{N}$, such that for every $n \in \mathbb{N}$ \begin{equation} \label{upperbound'} R_{n} \geq C_1 \Big| V_q\big(x_1^n,\ldots, x_m^n \big)\gamma_{\beta+1}(F_m)\Big|_{A_{\beta}^m}^{\frac{1}{\beta-1}}. \end{equation}

\par Now  we shall use a technique from \cite{BMS} to bound the right hand side of (\ref{upperbound'}). First, by using the commutator identities (\ref{commident}) and Fact \ref{bci} for the basic commutators $v,b_2\in F_m$, we may directly check that for every $n\in \mathbb{N}$ we have $$V_q\big(x_1^n, \ldots, x_m^n \big)\gamma_{\beta+1}(F_m)=V_q\big(x_1,\ldots ,x_m\big)^{n^{\beta}}\gamma_{\beta+1}(F_m).$$ Note also that the element $V_q(x_1,\ldots,x_m)\in \gamma_{\beta}(F_m)$ is a basic commutator of weight exactly $\beta$ since $[v,b_2]$ is a basic commutator, $b_2<v$ and $v<[v,b_2]$. In particular, the coset $V_q\big(x_1,\ldots,x_m\big)\gamma_{\beta+1}(F_m)$ is an infinite order element of the free abelian group $A_{\beta}^{m}=\gamma_{\beta}(F_m)/\gamma_{\beta+1}(F_m)$. Therefore, there exists $C_2>0$ such that \begin{equation} \label{lowerbound} \Big| V_q\big(x_1^n,\ldots, x_m^n \big)\gamma_{\beta+1}(F_m)\Big|_{A_{\beta}^m} \geq C_2 n^{rq+r+w(b_2)} \end{equation} for every $n \in \mathbb{N}$. By combining (\ref{upperbound'}) and (\ref{lowerbound}) we conclude that there exists $C_3>0$ such that $$R_n \geq C_3n^{1+\frac{1}{rq+r+w(b_2)-1}}$$ for every $n \in \mathbb{N}$.  The lemma follows.
\end{proof}

We will also need the following lemma for the proof of Theorem \ref{nonqie2} in \S\ref{proofs2}.

\begin{lemma} \label{estimate3}  Let $F_{m+p}$ \textup{(}resp. $F_m$\textup{)} be the free group on $\mathcal{S}_{m+p}:=\{x_1,\ldots,x_m\} \cup \{y_1,\ldots,y_p\}$  \textup{(}resp. $\mathcal{S}_m:=\{x_1,\ldots,x_m\}$\textup{)}. We consider the following data:\\
\noindent \textup{(a)} \hbox{$v(x_1,\ldots,x_m)=\big[b_1,b_2\big]$} is a basic commutator of weight $r \geq 2$, where $b_1,b_2\in F_m$ are basic commutators and $b_2<b_1$.\\
\noindent \textup{(b)} $\mathcal{B}=\big\{\omega_1,\ldots, \omega_{s}\big\}$ is a finite subset of $[F_m,F_m]$ such that the normal subgroup $\langle \langle \mathcal{B}\rangle \rangle$ of $F_m$ contains the element $v(x_1^n,\ldots,x_m^n)$ for every $n\in \mathbb{N}$.\\
\noindent \textup{(c)} $\Gamma_{\mathcal{B}}$ is a Gromov hyperbolic group with a presentation of the form \begin{equation} \label{estimate3-pr} \Gamma_{\mathcal{B}}=\Big \langle x_1,\ldots,x_m, y_1,\ldots, y_{p}\ \big| \ \omega_{1}R_1,\ldots, \omega_{s}R_s, R_{s+1}, \ldots, R_{s+\ell} \Big \rangle \end{equation} where $R_1,\ldots,R_{s+\ell} \in \langle \langle y_1,\ldots,y_p \rangle \rangle$. \par Denote by $\overline{x}_i,\overline{y}_j$ be the image of $x_i,y_j$ in $\Gamma_{\mathcal{B}}$ respectively. Let $P_{\mathcal{B}}$ be the fiber product of $\Gamma_{\mathcal{B}}$ with respect to the normal subgroup $N_{\mathcal{B}}=\langle \langle \overline{y}_1,\ldots,\overline{y}_p \rangle \rangle$, $$P_{\mathcal{B}}=\Big \langle \big(\overline{x}_1,\overline{x}_1\big),\ldots, \big(\overline{x}_m,\overline{x}_m\big), \big(1,\overline{y}_1\big),\big(\overline{y}_1,1\big),\ldots, \big(1,\overline{y}_p\big),\big(\overline{y}_r,1\big)\Big \rangle$$ and fix a word length function $|\cdot|_{\mathcal{B}}:P_{\mathcal{B}} \rightarrow \mathbb{N}$. Finally, consider the element of $N_{\mathcal{B}}$: $$V_q\big(\overline{x}_1^n,\ldots, \overline{x}_m^n \big):=\Big[\big[v(\overline{x}_1^n,\ldots,\overline{x}_m^n),b_2(\overline{x}_1^n,\ldots, \overline{x}_m^n)\big],\underbrace{v(\overline{x}_1^n,\ldots,\overline{x}_m^n), \ldots, v(\overline{x}_1^n,\ldots,\overline{x}_m^n)}_{q-\textup{times}}\Big], \ n \in \mathbb{N}.$$ There exists $C>0$ such that for every $n \in \mathbb{N}$, $$\Big| \big(1, V_q\big(\overline{x}_1^n,\ldots, \overline{x}_m^n \big)\big)\Big|_{\mathcal{B}}\geq Cn^{1+\varepsilon}$$ where $\varepsilon=\frac{1}{rq+r+w(b_2)-1}$.\end{lemma}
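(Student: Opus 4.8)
The plan is to run the argument of Lemma~\ref{estimate2}, with the linear Dehn function of the hyperbolic group $\Gamma_{\mathcal B}$ taking over the role played there by freeness of $F_m$. Abbreviate $\overline{x}^{\,n}=(\overline{x}_1^{\,n},\dots,\overline{x}_m^{\,n})$ and $x^{\,n}=(x_1^{\,n},\dots,x_m^{\,n})$, put $L:=\big|(1,V_q(\overline{x}^{\,n}))\big|_{\mathcal B}$, and (all word metrics on $P_{\mathcal B}$ being bi-Lipschitz) assume $|\cdot|_{\mathcal B}$ is induced by the generating set $\{(\overline{x}_i,\overline{x}_i)^{\pm1},(\overline{y}_j,\overline{y}_j)^{\pm1},(1,\overline{y}_j)^{\pm1}\}$; fix a word of length $L$ in it representing $(1,V_q(\overline{x}^{\,n}))$. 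Let $\pi\colon F_{m+p}\to\Gamma_{\mathcal B}$ be the quotient map for the presentation~(\ref{estimate3-pr}), $K=\ker\pi$, $\widetilde N=\langle\langle y_1,\dots,y_p\rangle\rangle\subset F_{m+p}$, and $\pi_0\colon F_{m+p}\to F_m$ the retraction killing $y_1,\dots,y_p$; since $R_1,\dots,R_{s+\ell}\in\widetilde N$ one has $\pi_0(\omega_iR_i)=\omega_i$ and $\pi_0(R_{s+j})=1$. Viewing $V_q(x^{\,n})$ also as a word of length $O(n)$ in $F_{m+p}$ supported on the $x$'s, note $\pi(V_q(x^{\,n}))=V_q(\overline{x}^{\,n})$.

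The first step is to lift the chosen word letterwise to the \emph{free} fiber product $F_{m+p}\times_{\widetilde N}F_{m+p}$ (via $(\overline{x}_i,\overline{x}_i)\mapsto(x_i,x_i)$, $(\overline{y}_j,\overline{y}_j)\mapsto(y_j,y_j)$, $(1,\overline{y}_j)\mapsto(1,y_j)$); this produces $(\alpha,\beta)\in F_{m+p}\times_{\widetilde N}F_{m+p}$ with $|\alpha|_{F_{m+p}},|\beta|_{F_{m+p}}\le L$ and $(\pi(\alpha),\pi(\beta))=(1,V_q(\overline{x}^{\,n}))$. Hence $\kappa_1:=\alpha\in K$ and $\kappa_2:=\beta\,V_q(x^{\,n})^{-1}\in K$, with $|\kappa_1|_{F_{m+p}}\le L$ and $|\kappa_2|_{F_{m+p}}\le L+O(n)$. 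Since $(\alpha,\beta)$ lies in the free fiber product, $\beta=\alpha w$ with $w\in\widetilde N$, so $\pi_0(\beta)=\pi_0(\alpha)$, and applying $\pi_0$ to $V_q(x^{\,n})=\kappa_2^{-1}\beta$ gives, inside $F_m$,
\[
V_q(x^{\,n})=\pi_0(\kappa),\qquad \kappa:=\kappa_2^{-1}\kappa_1\in K,\quad |\kappa|_{F_{m+p}}\le 2L+O(n).
\]
I expect this to be the delicate point: the bound on $|\kappa|_{F_{m+p}}$ must be \emph{linear} in $L$ and $n$, which is precisely why one passes through the free fiber product rather than telescoping the word inside $\Gamma_{\mathcal B}$ and then lifting (the latter only gives a quadratic bound, which would halve the final exponent).

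Next I would use hyperbolicity of $\Gamma_{\mathcal B}$: the linear isoperimetric inequality for the presentation~(\ref{estimate3-pr}), together with the standard van Kampen diagram bound on the conjugating words (see~\cite{BH}), gives $D_0>0$ and a factorization $\kappa=\prod_{k=1}^{A}u_k\,\rho_k^{\delta_k}\,u_k^{-1}$ in $F_{m+p}$ with $\rho_k\in\{\omega_1R_1,\dots,\omega_sR_s,R_{s+1},\dots,R_{s+\ell}\}$, $\delta_k=\pm1$, $A\le D_0|\kappa|_{F_{m+p}}$ and $|u_k|_{F_{m+p}}\le D_0|\kappa|_{F_{m+p}}$. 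Applying $\pi_0$, all the $R_{s+j}$-terms disappear and
\[
V_q(x^{\,n})=\prod_{k\in S} z_k^{-1}\,\omega_{i_k}^{\delta_k}\,z_k\quad\text{in }F_m,\qquad z_k:=\pi_0(u_k)^{-1},\ \ \omega_{i_k}\in\mathcal B,
\]
where $S\subseteq\{1,\dots,A\}$ and $\max\{|S|,|z_k|_{F_m}:k\in S\}\le C_1(L+n)$ for some $C_1>0$. Since $V_q(x^{\,n})\in\gamma_{\beta}(F_m)$ for $\beta:=rq+r+w(b_2)$ (exactly as in Lemma~\ref{estimate2}), Lemma~\ref{estimate1} applied with $p=\beta$ and the finite set $\mathcal B^{\pm1}\subset[F_m,F_m]$ yields $C_1(L+n)\ge c\,\big|V_q(x^{\,n})\gamma_{\beta+1}(F_m)\big|_{A_\beta^m}^{1/(\beta-1)}$.

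Finally, as in the proof of Lemma~\ref{estimate2}, the commutator identities~(\ref{commident}) and Fact~\ref{bci} give $V_q(x^{\,n})\gamma_{\beta+1}(F_m)=V_q(x_1,\dots,x_m)^{\,n^{\beta}}\gamma_{\beta+1}(F_m)$, and $V_q(x_1,\dots,x_m)$ is a basic commutator of weight exactly $\beta$ (because $[v,b_2]$ is basic and $v<[v,b_2]$), so it represents a basis element of the free abelian group $A_\beta^m$ and $\big|V_q(x^{\,n})\gamma_{\beta+1}(F_m)\big|_{A_\beta^m}\ge C_2 n^{\beta}$. Combining the last two displays, $C_1(L+n)\ge c\,C_2^{1/(\beta-1)}n^{\beta/(\beta-1)}$; since $\beta/(\beta-1)=1+\varepsilon$ with $\varepsilon=\tfrac1{rq+r+w(b_2)-1}>0$, the summand $n$ on the left is eventually negligible against $n^{1+\varepsilon}$, whence $L\ge C n^{1+\varepsilon}$ for all large $n$ and hence, after shrinking $C$, for every $n$. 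Apart from the linear control on $|\kappa|_{F_{m+p}}$, which is the crux, the proof is a transcription of that of Lemma~\ref{estimate2}, with the linear Dehn function of $\Gamma_{\mathcal B}$ replacing the freeness of $F_m$.
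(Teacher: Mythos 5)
Your proof is correct and follows essentially the same strategy as the paper: reduce, via the retraction $\pi_0$ and the fiber-product structure, to writing $V_q(x_1^n,\dots,x_m^n)\in F_m$ as a product of conjugates of $\mathcal B^{\pm1}$ with linear control on the number of factors and the conjugating words, then invoke Lemma~\ref{estimate1} and the lower bound~(\ref{lowerbound}). The one genuine streamlining is in the middle step: the paper lifts by first decomposing $(1,V_q(\overline{x}^{\,n}))$ into diagonal pieces $(\overline{z}_i,\overline{z}_i)$ and kernel pieces $(1,\overline{w}_i)$ and then applies the undistortion result \cite[Thm.~2]{OS} to \emph{two} auxiliary elements $(1,W_n)$ and $(1,W_n')$ of $F_{m+p}\times_{\langle\langle\mathcal R\rangle\rangle}F_{m+p}$, whereas you lift letterwise into $F_{m+p}\times_{\widetilde N}F_{m+p}$ to obtain a single $\kappa\in\ker\pi$ with $\pi_0(\kappa)=V_q(x^{\,n})$ and $|\kappa|=O(L+n)$, and then use the linear isoperimetric inequality together with the standard van Kampen bound on conjugator lengths; since these two tools are equivalent expressions of the linear Dehn function of the hyperbolic group $\Gamma_{\mathcal B}$, the two arguments carry the same mathematical content, but yours is a touch more self-contained.
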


\begin{proof} Let $\mathcal{R}$ be the set of relations in the given presentation of $\Gamma_{\mathcal{B}}=\langle \mathcal{S}_{m+p}|\mathcal{R} \rangle$ and denote by \hbox{$\pi:F_{m+p}\twoheadrightarrow{} \Gamma_{\mathcal{B}}$} the canonical projection. Let also $\eta:F_{m+p}\twoheadrightarrow{} F_m$ be the retract onto $F_m=F(\mathcal{S}_m)$ (i.e. $\eta|_{F_m}$ is the identity homomorphism) and note that $\eta(\mathcal{R})\subset  \mathcal{B}^{\pm 1}\cup\{1\}$. Let us also observe that since \hbox{$v(x_1^n,\ldots,x_m^n)\in \langle \langle \mathcal{B} \rangle \rangle$} and $\pi(\langle \langle \mathcal{B} \rangle \rangle) \subset N_{\mathcal{B}}$, we verify that $V_q(\overline{x}_1^m,\ldots, \overline{x}_m^n)$ is an element of $N$. We may also assume that the word metric on $P_{\mathcal{B}}$ is induced by the generating set $\big\{(\overline{x}_i,\overline{x}_i)^{\pm 1}, (1,\overline{y}_j)^{\pm 1}: x_i,y_j\in \mathcal{S}_{m+p}\big\}$ and let us set $$E_{n}:=\Big|\big(1, V_q\big(\overline{x}_1^n,\ldots, \overline{x}_m^n \big)\big)\Big|_{\mathcal{B}}.$$ \par By the definition of the metric on $P_{\mathcal{B}}$, we may find $\overline{w}_1,\ldots,\overline{w}_T \in \big\{\overline{y}_1^{\pm 1},\ldots, \overline{y}_p^{\pm 1} \big\}$, \hbox{$\overline{z}_1,\ldots,\overline{z}_{T+1} \in \Gamma_{\mathcal{B}}$} and $z_1,\ldots,z_{T+1} \in F_{m+p}$, with $\pi(z_i)=\overline{z}_i$ for every $i$, such that:
\medskip

\noindent \textup{(i)} $\big(1, V_q\big(\overline{x}_1^n,\ldots, \overline{x}_m^n \big)\big)=\big(\overline{z}_1,\overline{z}_1\big)(1, \overline{w}_1)\cdots \big(\overline{z}_T,\overline{z}_T\big)(1,\overline{w}_T)\big(\overline{z}_{T+1},\overline{z}_{T+1}\big)$,\\
\noindent \textup{(ii)} $\overline{z}_1\cdots \overline{z}_{{T+1}}=1$,\\
\noindent \textup{(iii)} for each $1 \leq i\leq T+1$, $z_i$ is a product of at most $\ell(z_i)$ elements in $\mathcal{S}_{m+p}^{\pm1}$ and also $E_n=\sum_{i=1}^{T+1} \ell(z_i)+T$.
\medskip

Now let us choose lifts $w_1,\ldots,w_{T}\in \big\{y_1^{\pm 1},\ldots, y_{p}^{\pm}\big\}$ of $\overline{w}_1,\ldots, \overline{w}_T\in N_{\mathcal{B}}$ respectively. Note that the following two elements of $F_{m+p}$, $$W_{n}:=\big(z_1 w_1\cdots z_{T}w_{T}z_{T+1}\big)^{-1}V_q\big(x_1^n,\ldots, x_m^n \big) \ \ \textup{and} \ \ W_{n}':=z_1\cdots z_{T+1},$$ represent the identity in $\Gamma$ and observe that for every $n\in \mathbb{N}$ we have \begin{align*} \max \Big\{\big|W_n\big|_{F_{m+p}}, \big|W_n'\big|_{F_{m+p}}\Big\}\leq E_{n}+10^{q}n\big(\big|v\big|_{F_m}+\big|b_2\big|_{F_m}\big)=:E_n'. \end{align*} Moreover, since $\Gamma_{\mathcal{B}}= \langle \mathcal{S}_{m+p} \big| \mathcal{R}  \rangle$ is Gromov hyperbolic, by \cite[Thm. 2]{OS}, the fiber product $F_{m+p} \times_{\langle \langle \mathcal{R} \rangle \rangle} F_{m+p}$ is an undistorted subgroup of $F_{m+p} \times F_{m+p}$. In particular, we may choose $C_1>1$ and sequences $\big\{g_{i}\big \}_{i=1}^{s_n+1}, \big\{h_{i}\big \}_{i=1}^{d_n+1} \subset F_{m+p}$ and $\big\{U_{i}\big \}_{i=1}^{s_n}, \big\{Y_{i}\big \}_{i=1}^{d_n} \subset \mathcal{R}^{\pm 1}$ such that \begin{align*}\big(1,W_n \big)&=\big(g_1,g_1 \big)\big(1,U_1\big)\cdots \big(g_{s_n},g_{s_n}\big)\big(1,U_{s_n}\big)\big(g_{s_n+1},g_{s_n+1}\big)\\
\big(1,W_n'\big)&=\big(h_1,h_1\big)\big(1,Y_1\big)\cdots \big(h_{d_n},h_{d_n}\big)\big(1,Y_{d_n}\big)\big(h_{d_n+1},h_{d_n+1}\big)\end{align*} where $\sum_{i=1}^{s_n+1}\big|g_i\big|_{F_{m+p}}+s_n\leq C_1E_n'$ and $\sum_{i=1}^{d_n+1}\big|h_i\big|_{F_{m+p}}+s_n'\leq C_1E_n'$ for every $n \in \mathbb{N}$. Therefore, we may write: \begin{align} \label{estimate3-eq1}V_q\big(x_1^n,\ldots, x_m^n \big)&=z_1 w_1\cdots z_{T}w_{T}z_{T+1} \prod_{i=1}^{s_{n}}u_{i}U_{i}u_{i}^{-1}, u_{i}:=g_1\cdots g_{i} \\ z_1\cdots z_{T+1}&=\prod_{i=1}^{d_{n}} v_{i} Y_{i} v_{i}^{-1}, \  v_i=h_1\cdots h_i,\end{align} where $V_q\big(x_1^n,\ldots, x_m^n \big) \in F_{m}$ is defined in Lemma \ref{estimate2}. Moreover, observe that $$\big|u_i\big|_{F_{m+p}} \leq C_1 E_n', \ \big|v_j\big|_{F_{m+p}}\leq C_1 E_n', \ \textup{and} \ \max\big \{s_n,d_n\big\} \leq C_1E_n'$$ for every $1\leq i\leq s_n$ and $1\leq j\leq d_n$ and $n \in \mathbb{N}$. By using (\ref{estimate3-eq1}) and applying the retract $\eta:F_{m+p}\twoheadrightarrow F_m$ and since $w_1,\ldots, w_{T+1}\in \textup{ker}\eta$, we may write $$V_q\big(x_1^n,\ldots, x_m^n \big)=\prod_{i=1}^{d_{n}} \eta(v_{i}) \eta(Y_{i}) \eta(v_{i})^{-1} \prod_{i=1}^{s_{n}}\eta(u_{i}) \eta(U_{i})\eta(u_{i})^{-1},$$ where $\eta \big(U_i\big),\eta(Y_j) \in \mathcal{B}^{\pm 1} \cup\{1\}$. Note that the word length of $\eta(u_i)$ and $\eta(v_j)$ in $F_m$ is at most equal to $C_1E_n'$. By applying Lemma \ref{estimate1} for $V_q(x_1^n,\ldots,x_m^n)\in \gamma_{\beta}(F_m)$, where $\beta=rq+r+w(b_2)$, we conclude that there exists a constant $C_2>1$, independent of $n$, such that \begin{align*}C_1 E_n' &\geq \max_{i,j}\Big\{s_n,d_n,\big|u_i\big|_{F_{m+p}},\big|v_j\big|_{F_{m+p}}\Big\}\\  &\geq \max_{i,j}\Big\{s_n,d_n,\big|\eta(u_i)\big|_{F_m},\big|\eta(v_j)\big|_{F_{m}}\Big\} \geq C_2 \Big|V_q(x_1^n,\ldots,x_m^n\big)\gamma_{\beta+1}(F_m)\Big|^{\frac{1}{\beta-1}}_{A_{\beta}^{m}}\end{align*} for every $n \in \mathbb{N}$. Now by using (\ref{lowerbound}) we deduce that there exists $C_3>0$ such that \hbox{$E_n' \geq C_3 n^{1+\frac{1}{\beta-1}}$} for every $n \in \mathbb{N}$. In particular, there exists $C_4>0$ such that $$E_{n} \geq C_4n^{1+\frac{1}{rq+r+w(b_2)-1}}$$for every $n\in \mathbb{N}$. The proof of the lemma is complete.\end{proof}

\section{The Rips construction} \label{Rips-construction}
In this section we recall Rips' construction from \cite{Rips}. For other generalizations of the Rips construction we refer the reader to \cite{Rips-Wise}, \cite[Thm. 1.6]{HW}, \cite{BO} and \cite{Arenas}. For the definition of the $C'(\lambda)$, $\lambda>0$, small cancellation condition see \cite[Ch. V]{LS}. Rips established the following theorem in order to provide pathological examples of finitely generated subgroups of small cancellation groups. 

\begin{theorem}\textup{(Rips} \cite{Rips}\textup{)} \label{Rips}Let $Q$ be a finitely presented group with presentation $$Q=\big \langle a_1,\ldots,a_m \ \big|\  R_1, \ldots, R_{\ell} \big \rangle$$ where $R_1,\ldots,R_{\ell}$ are \textup{(}possibly empty\textup{)} words of the free group on $\{a_1,\ldots,a_m\}$. There exist explicit choices of sequences of integers $\{r_i\}_{i=1}^{\ell}, \{s_i\}_{i=1}^{\ell}$ and $\{\kappa_{qi}\}_{i=1}^{m}, \{\lambda_{qi}\}_{i=1}^{m}$, $q=1,\ldots,4$, such that the finitely presented group  \begin{equation} \label{Rips group}\Gamma=\sbox0{$a_1,\ldots, a_m, x,y  \ \Biggr | \  \begin{matrix}[1.1]
 R_1xy^{r_1}xy^{r_1+1}\cdots xy^{r_1+s_1}, \ldots, R_{\ell}xy^{r_{\ell}}xy^{r_{\ell}+1}\cdots xy^{r_{\ell}+s_{\ell}}, \ \ \ \ \ \ \ \ \ \ \ \ \ \ \ \\
a_{i}xa_{i}^{-1}xy^{\lambda_{1i}}xy^{\lambda_{1i}+1}\cdots xy^{\kappa_{1i}+\lambda_{1i}},\ a_{i}^{-1}xa_{i}xy^{\lambda_{2i}}xy^{\lambda_{2i}+1}\cdots xy^{\kappa_{2i}+\lambda_{2i}},\\
a_{i}ya_{i}^{-1}xy^{\lambda_{3i}}xy^{\lambda_{3i}+1}\cdots xy^{\kappa_{3i}+\lambda_{3i}},\ a_{i}^{-1}ya_{i}xy^{\lambda_{4i}}xy^{\lambda_{4i}+1}\cdots xy^{\kappa_{4i}+\lambda_{4i}}\\ 
i=1,\ldots,m \end{matrix}$}
\mathopen{\resizebox{1.5\width}{\ht0}{$\Bigg\langle$}}
\usebox{0}
\mathclose{\resizebox{1.5\width}{\ht0}{$\Bigg\rangle$}}
\end{equation} has the following properties:\\
\noindent \textup{(i)} The presentation of $\Gamma$ satisfies the $C'(\frac{1}{6})$ small cancellation condition, hence $\Gamma$ is Gromov hyperbolic.\\
\noindent \textup{(ii)} $N=\langle x,y \rangle$ is a normal  subgroup of $\Gamma$ and there exists a short exact sequence $$1\rightarrow N \rightarrow\Gamma \rightarrow Q\rightarrow 1.$$\end{theorem}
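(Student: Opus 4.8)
The plan is to split Theorem~\ref{Rips} into the elementary algebraic statement (ii) and the combinatorial small cancellation estimate (i), and to dispose of (ii) first. Set $N:=\langle x,y\rangle\leq\Gamma$. The four families of relators $a_i^{\pm1}xa_i^{\mp1}(\cdots)$ and $a_i^{\pm1}ya_i^{\mp1}(\cdots)$, where $(\cdots)$ denotes a word in $x,y$ only, assert precisely that every conjugate $a_i^{\pm1}xa_i^{\mp1}$ and $a_i^{\pm1}ya_i^{\mp1}$ lies in $N$; since $\{a_1,\dots,a_m,x,y\}$ generates $\Gamma$, this forces $N\trianglelefteq\Gamma$. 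Passing to $\Gamma/N$ kills $x$ and $y$, so each block $xy^{c}xy^{c+1}\cdots$ becomes the empty word and the surviving relations are exactly $R_1,\dots,R_\ell$; hence $\Gamma/N$ has presentation $\langle a_1,\dots,a_m\mid R_1,\dots,R_\ell\rangle=Q$, which gives the short exact sequence of (ii).

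For (i) I would choose the integer parameters so that all exponents are comparable to a single large parameter $L$: pick the ``initial exponents'' $r_i$ and $\lambda_{qi}$ to be positive, pairwise distinct, and pairwise separated by more than $L$; set every ``length'' $s_i$ and $\kappa_{qi}$ equal to $L$; and take $L$ enormous compared with $m,\ell$ and $\max_i|R_i|$. With this choice each block $B=xy^{c}xy^{c+1}\cdots xy^{c+L}$ is \emph{rigid}: the exponents of the maximal $y$-subwords between consecutive $x$'s form the strictly increasing run $c,c+1,\dots,c+L$, so reading them recovers $c$ and the position within $B$; and, the exponent intervals $[r_i,r_i+s_i]$ and $[\lambda_{qi},\kappa_{qi}+\lambda_{qi}]$ being pairwise disjoint, each exponent value occurs in at most one block of the presentation.

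Now symmetrize the relator set into $\mathcal R$ (all cyclic rotations of the relators and of their inverses) and estimate pieces. If a subword $P$ of an element of $\mathcal R$ contains a complete interior $y$-syllable $y^{c}$ flanked by $x$'s, then $c$ pins down which block $P$ sits in, hence the relator and the rotation, so $P$ cannot simultaneously be a subword of a second, distinct element of $\mathcal R$; consequently every piece lies inside one of the short words $R_i$ or $a_i^{\pm1}xa_i^{\mp1}$, $a_i^{\pm1}ya_i^{\mp1}$ together with at most two adjacent \emph{incomplete} syllables, whence $|P|\leq\max_i|R_i|+CL$ for a constant $C=C(m,\ell)$. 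On the other hand every element of $\mathcal R$ contains at least half of some block, so it has length $\geq\tfrac12\sum_{j=0}^{L}(c+j)\geq\tfrac12L^{2}$ because $c\geq L$. Choosing $L$ with $\max_i|R_i|+CL<\tfrac{1}{12}L^{2}$ yields $|P|<\tfrac16|r|$ for every relator $r$ containing $P$, i.e. the $C'(\tfrac16)$ condition; Gromov hyperbolicity of $C'(\tfrac16)$ groups is then the classical linear isoperimetric inequality via Greendlinger's lemma, see \cite[Ch.~V]{LS}.

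The main obstacle is exactly this piece estimate. One must check that no long common subword arises (a) between a block and the \emph{reverse} of a block---inverting a relator turns increasing runs into decreasing ones, but the rigidity argument applies verbatim; (b) straddling the junctions between a block and the bounded ``junk'' $R_i$, $a_i^{\pm1}xa_i^{\mp1}$; and (c) as a genuinely unavoidable partial syllable $y^{t}$, whose length can be as large as the maximal exponent $\asymp L$---harmless only because each relator was deliberately made of length $\asymp L^{2}\gg L$. Engineering this length gap, by taking $s_i=\kappa_{qi}=L$ while keeping the initial exponents of size $\asymp L$, is the whole point of the construction; the remaining verifications are routine bookkeeping within the combinatorics of \cite[Ch.~V]{LS}.
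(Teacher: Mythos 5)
The paper states Theorem~\ref{Rips} as a result of Rips, citing \cite{Rips}, and does not reprove it; there is therefore no internal proof to compare your argument against, and I assess it as a stand-alone reconstruction. As such it is essentially correct and is the expected small-cancellation computation. Part (ii) is right: the four conjugation families exhibit every $a_i^{\pm1}$-conjugate of $x$ and of $y$ as a word in $x,y$, so $N=\langle x,y\rangle$ is normal, and adding the relations $x=y=1$ turns every block into the empty word and every conjugation relator into a triviality, leaving $\langle a_1,\dots,a_m\mid R_1,\dots,R_\ell\rangle=Q$. Part (i) correctly isolates the decisive idea: make the exponent windows $[r_j,r_j+s_j]$ and $[\lambda_{qi},\lambda_{qi}+\kappa_{qi}]$ pairwise disjoint, so that any common subword containing a complete $y$-syllable pins down the block, the relator and the cyclic rotation; pieces are then of length $O(L)$ while relators have length $\gtrsim L^2$, giving $C'(1/6)$ for $L$ large, and Greendlinger's lemma yields a linear isoperimetric inequality, hence hyperbolicity.

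A few points deserve tightening, though none is a gap. The rigidity criterion should read ``$P$ contains a complete $y$-syllable'' rather than ``a complete $y$-syllable flanked by $x$'s'': the last block syllable $y^{r_j+s_j}$ is bordered on the right by the first letter of the junk rather than by $x$, yet its exponent still identifies the block since the windows are disjoint. Your case (a) is handled even more simply than you say, because inversion replaces $y^c$ by $y^{-c}$, so the sign of any $y$-letter in a piece already separates forward from reversed blocks, with no need to compare increasing and decreasing runs. Finally, one should note (after discarding redundant $R_j$) that the junk words $R_j$, $a_i^{\pm1}xa_i^{\mp1}$, $a_i^{\pm1}ya_i^{\mp1}$ are pairwise distinct and not cyclically-inverse-related, so a piece traversing the entire junk also determines the relator and the rotation; this disposes of the wrap-around pieces in your case (b). Also, your lower bound $|r|\geq\tfrac12 L^2$ invokes $c\geq L$, which you did not actually arrange; but $\sum_{j=0}^{L}(c+j)\geq L(L+1)/2$ already holds for $c\geq 1$, which suffices.
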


\begin{rmk}{\em In Rips' construction the $2$-generated group $N$ can be chosen to be {\em perfect}, i.e. $N=[N,N]$. For example, it is possible to choose the first four relations in (\ref{Rips group}) as follows: \begin{align*}
xy^{}xy^{2}\cdots xy^{p_1}&=1 \\
xy^{p_2}xy^{p_2+1}\cdots xy^{p_2+p_1-1}&=1\\
xy^{p_4}xy^{p_4+1}\cdots xy^{p_4+p_3-1}&=1 \\  
xy^{p_5}xy^{p_5+1}\cdots xy^{p_5+p_3-1}&=1\end{align*} where $p_1,\ldots,p_5\in \mathbb{N}$ are large enough, $\textup{gcd}(p_1(p_2-1),p_3(p_5-p_4))=1$, $p_2>p_1$, $p_4>p_2+p_1$ and $p_5>p_4+p_3$. Note that the commutator $[N,N]$ contains the words $$x^{p_1} y^{r_{p_1 1}}, x^{p_1} y^{r_{p_1 p_2}}, x^{p_3} y^{r_{p_3p_4}},x^{p_3} y^{r_{p_3p_5}}$$ where $r_{pq}:=pq+\frac{1}{2}p(p-1)$. It follows that $y^{p_1(p_2-1)},y^{p_3(p_5-p_4)}\in [N,N]$ and hence $y\in [N,N]$. In particular, $x^{p_1},x^{p_3}\in [N,N]$ and so $x\in[N,N]$.}
\end{rmk}

Interesting examples of finitely generated subgroups of hyperbolic groups are constructed by applying the Rips construction to a finitely presented group $Q$ with certain pathologies. By following this point of view and using Corollary \ref{lemma-semisimple1} we obtain the following corollary.

\begin{corollary} \label{semisimple-nonqie} Let $Q$ be a finitely presented group whose Dehn function grows faster than any fixed iterated of the exponential \textup{(}e.g. $Q$ has unsolvable word problem\textup{)}. Let $\Gamma$ be a $C'(\frac{1}{6})$ small cancellation group provided by Theorem \ref{Rips} such that there exists a short exact sequence $$1\rightarrow N \rightarrow \Gamma \rightarrow Q\rightarrow 1$$ where $N$ is $2$-generated and perfect. Let $P=\Gamma \times_{N} \Gamma$, choose a word length function $|\cdot|_{P}:P\rightarrow \mathbb{N}$ and fix $m \in \mathbb{N}$. There exists an infinite sequence $(w_{n})_{n\in \mathbb{N}}$ of elements in $P$ with the property: for every semisimple representation $\rho:P \rightarrow \mathsf{GL}_d(k)$ we have $$\Big|\Big| \mu\big(\rho(w_{n})\big)\Big|\Big|_{\mathbb{E}} \leq \underbrace{\log \cdots \log}_{m-\textup{times}} \big|w_{n}\big|_{P}$$ for all but finitely many $n \in \mathbb{N}$.\end{corollary}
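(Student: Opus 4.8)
The plan is to place the entire sequence inside the normal subgroup $\{1\}\times N$ of $P$, and to play off two facts against each other. First, since $N$ is perfect ($[N,N]=N$), Corollary \ref{lemma-semisimple1} applied with $\gamma=1$ gives, for every semisimple $\rho\colon P\to\mathsf{GL}_d(k)$, constants $C,c>0$ depending only on $\rho$ with $\|\mu(\rho(1,w))\|_{\mathbb{E}}\le C|w|_\Gamma+c$ for \emph{every} $w\in N$. Second, $\{1\}\times N$ is wildly distorted in $P$, with distortion governed by the Dehn function $\delta_Q$ of $Q$, which by hypothesis dominates every iterated exponential. So I fix the finite presentation $Q=\langle\mathcal{A}\mid\mathcal{R}\rangle$ feeding Rips' construction, so that $\Gamma=\langle\mathcal{A}\cup\{x,y\}\mid\cdots\rangle$ as in (\ref{Rips group}), the projection $\pi\colon\Gamma\to Q$ kills $x,y$, and $N=\langle x,y\rangle=\ker\pi$. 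For each $n$ I choose a word $u_n$ of length $\le n$ in $\mathcal{A}^{\pm1}$ representing the identity of $Q$ and realising the Dehn function, $\textup{Area}_Q(u_n)=\delta_Q(n)$, and let $w_n\in N$ be the element of $\Gamma$ it represents, so that $|w_n|_\Gamma\le n$. The claimed sequence is $\big((1,w_n)\big)_{n\in\mathbb{N}}$.

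The key point is the distortion estimate
\[
\big|(1,w_n)\big|_P\ \ge\ c_0\,\sqrt{\delta_Q(n)}\qquad\text{for all large }n,
\]
for a constant $c_0>0$. To prove it, write $(1,w_n)$ as a word of length $L_n:=|(1,w_n)|_P$ in the generating set $\{(a,a)^{\pm1}:a\in\mathcal{A}\cup\{x,y\}\}\cup\{(1,x)^{\pm1},(1,y)^{\pm1}\}$ of $P$ and project to the two coordinates. The first projection is a word $W_1$ of length $\le L_n$ representing $1$ in $\Gamma$; comparing the two projections gives an equality $w_n=_\Gamma V_n$ in $\Gamma$, where $V_n$ is a product of at most $L_n$ conjugates $p\,z^{\pm1}p^{-1}$ with $z\in\{x,y\}$ (coming from the $(1,x)^{\pm1},(1,y)^{\pm1}$ letters) and $p$ a prefix of $W_1$, so $|p|\le L_n$ and $|V_n|\le 3L_n^2$. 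Thus $u_nV_n^{-1}$ is a word of length $\le n+3L_n^2$ representing $1$ in $\Gamma$; as $\Gamma$ is Gromov hyperbolic it is a product of at most $K_0(n+3L_n^2)$ conjugates of defining relators of $\Gamma$. Now apply the retraction $\theta\colon F(\mathcal{A}\cup\{x,y\})\to F(\mathcal{A})$ killing $x,y$: by inspection of (\ref{Rips group}), $\theta$ sends each defining relator of $\Gamma$ either into $\mathcal{R}^{\pm1}$ or to the empty word, while $\theta(V_n)=1$, so $\theta$ turns the above expression into a product of at most $K_0(n+3L_n^2)$ conjugates of $\mathcal{R}^{\pm1}$ equal to $u_n$. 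Hence $\delta_Q(n)=\textup{Area}_Q(u_n)\le K_0(n+3L_n^2)$, and since $\delta_Q$ is super-linear this forces $L_n\ge c_0\sqrt{\delta_Q(n)}$ for large $n$. Taking $K_0$ also an isoperimetric constant for $\Gamma$, the same comparison applied to $u_n$ itself shows $w_n\ne1$ in $\Gamma$ once $\delta_Q(n)>K_0n$; so $L_n\to\infty$, the $(1,w_n)$ are eventually pairwise distinct, and we genuinely get an infinite sequence.

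To conclude, fix a semisimple $\rho\colon P\to\mathsf{GL}_d(k)$ with associated constants $C,c$; then $\|\mu(\rho(1,w_n))\|_{\mathbb{E}}\le Cn+c$ for all $n$. Since $\delta_Q$, hence $\sqrt{\delta_Q}$, grows faster than the $m$-fold iterated exponential, for all large $n$ we have
$\underbrace{\log\cdots\log}_{m}\big|(1,w_n)\big|_P\ \ge\ \underbrace{\log\cdots\log}_{m}\big(c_0\sqrt{\delta_Q(n)}\big)\ \ge\ Cn+c\ \ge\ \|\mu(\rho(1,w_n))\|_{\mathbb{E}}$,
which is the asserted inequality for all but finitely many $n$.

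The only nontrivial step is the distortion bound $|(1,w_n)|_P\ge c_0\sqrt{\delta_Q(n)}$, that is, converting a short $P$-word for $(1,w_n)$ into a cheap van Kampen diagram for $u_n$ over $Q$; everything else is bookkeeping. Alternatively one may invoke the known fact (essentially due to Bridson) that $\{1\}\times N$ has distortion $\simeq\delta_Q$ in $\Gamma\times_N\Gamma$ whenever $\Gamma$ is finitely presented, and feed it directly into Corollary \ref{lemma-semisimple1}.
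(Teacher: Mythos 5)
Your proof is correct and follows the same two--step strategy as the paper: feed the perfectness of $N$ into Corollary~\ref{lemma-semisimple1} to get a linear bound $\|\mu(\rho(g))\|_{\mathbb{E}}\preceq |g|_{\Gamma\times\Gamma}$ on $P$, then play this against the super-exponential distortion of $P$ inside $\Gamma\times\Gamma$. The one genuine difference is in how the distortion bound is obtained. The paper simply invokes Proposition~\ref{dist} from the Appendix (together with the linearity of $\delta_{\mathcal{P}_\Gamma}$ for the hyperbolic group $\Gamma$), which gives $\textup{Dist}_{P}^{\Gamma\times\Gamma}(q)\succeq\delta_Q(q)$; you instead reprove a distortion bound from scratch via the retraction $\theta$ killing $x,y$, specialized to the Rips presentation, obtaining $|(1,w_n)|_P\gtrsim\sqrt{\delta_Q(n)}$. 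Your estimate $|V_n|\le 3L_n^2$ is unnecessarily crude: since $V_n=W_2W_1^{-1}$ with $|W_1|,|W_2|\le L_n$, after free reduction $|V_n|\le 2L_n$, which recovers the linear distortion $L_n\gtrsim\delta_Q(n)$ of the paper's Proposition~\ref{dist} and removes the square root (which is harmless anyway, as $\delta_Q$ dominates every iterated exponential). Otherwise the argument -- in particular the observation that $\theta$ carries each Rips relator to an element of $\mathcal{R}^{\pm1}\cup\{1\}$ and kills $V_n$, so a $\Gamma$-diagram for $u_nV_n^{-1}$ pushes forward to a $Q$-diagram for $u_n$ -- is exactly the mechanism behind Proposition~\ref{dist}, and the paper's proof is just the same computation packaged there.
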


\begin{proof} Fix a left invariant word metric on $\Gamma$ and equip $\Gamma \times \Gamma$ with the product metric. Since $N=[N,N]$, Corollary \ref{lemma-semisimple1} implies that there exist $C_{\rho},c_{\rho}>0$, depending only on $\rho$, such that $$\Big|\Big|\mu\big(\rho(g)\big)\Big|\Big|_{\mathbb{E}}\leq C_{\rho} \big|g \big|_{\Gamma \times \Gamma}+c_{\rho}$$ for every $g \in P$. By using Proposition \ref{dist}, it follows that the distortion of $P$ in $\Gamma \times \Gamma$ grows at least as the Dehn function of the quotient group $Q=\Gamma/N$ and in particular faster than any iterated exponential $\underbrace{\textup{exp}\circ \cdots \circ  \textup{exp}}_{q-\textup{times}}$. In other words, we may choose a sequence $(w_n)_{n \in \mathbb{N}}$ with $$\big|w_n\big|_{\Gamma \times \Gamma} \leq \underbrace{\log \cdots \log}_{(m+1)-\textup{times}} \big|w_{n}\big|_{P}$$ for all but finitely many $n\in \mathbb{N}$. This finishes the proof of the conclusion.\end{proof}

\section{Proof of Theorem \ref{nonqie1} and Theorem \ref{nonqie2}} \label{proofs2}

We now state and prove a more general version of Theorem \ref{nonqie1} which additionally provides an explicit sequence of elements satisfying  (\ref{intro-ineq1}) for every linear representation. We still denote by $F_m$ the free group on $\big\{x_1,\ldots,x_m\big\}$ and by $k$ a local field.

\begin{theorem} \label{nonqie1gen} Let \hbox{$v=\big[b_1,b_2\big]$} be a basic commutator of weight $r \geq 2$, where $b_1,b_2\in F_m$ are basic commutators with $b_2<b_1$ and $w(b_1)+w(b_2)=r$. Suppose that $\mathcal{A}$ is a finite subset of $[F_m,F_m]$ such that the normal subgroup $\langle \langle \mathcal{A}\rangle \rangle$ of $F_m$ contains the element $v(x_1^n,\ldots,x_m^n)$ for every $n\in \mathbb{N}$. Consider the fiber product $\Delta_{\mathcal{A}}=F_m\times_{\langle \langle \mathcal{A}\rangle \rangle}F_m$ and fix a word length function $|\cdot|_{\mathcal{A}}:\Delta_{\mathcal{A}} \rightarrow \mathbb{N}$. For every representation $\rho:\Delta_{\mathcal{A}} \rightarrow \mathsf{GL}_d(k)$ there exists $C_{\rho}>0$ such that for every $n\in \mathbb{N}$: \begin{equation} \Big|\Big| \mu \big(\rho\big( w(d,n) \big)\big)\Big|\Big|_{\mathbb{E}} \leq C_{\rho}\big|w(d,n)\big|_{\mathcal{A}}^{1-\frac{1}{rd+r+w(b_2)}} \end{equation} where $w(d,n):=\big(1,V_d\big(x_1^n,\ldots, x_m^n\big)\big)\in \langle \langle \mathcal{A}\rangle \rangle$ is defined as in Lemma \ref{estimate2}. \end{theorem}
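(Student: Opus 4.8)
The plan is to pair two estimates that are already at our disposal: the upper bound for the norm of the Cartan projection of a linear representation of a fiber product given by Corollary~\ref{mainthm2-fiberproducts}, and the lower bound for the word length of $w(d,n)$ given by Lemma~\ref{estimate2}. Matching the two exponents will produce precisely $1-\frac{1}{rd+r+w(b_2)}$. First I would rewrite $w(d,n)$ in a form to which Corollary~\ref{mainthm2-fiberproducts} applies. Put $v_n:=v(x_1^n,\ldots,x_m^n)$ and $u_n:=\big[v_n,\,b_2(x_1^n,\ldots,x_m^n)\big]$, so that by the definition in Lemma~\ref{estimate2} we have $V_d(x_1^n,\ldots,x_m^n)=\big[u_n,\underbrace{v_n,\ldots,v_n}_{d}\big]$, an iterated commutator with $d+1$ entries $u_n,v_n,\ldots,v_n$. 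By hypothesis $v_n\in N:=\langle\langle\mathcal{A}\rangle\rangle$, and since $N$ is normal in $F_m$ the conjugate $b_2(x_1^n,\ldots,x_m^n)^{-1}\,v_n\,b_2(x_1^n,\ldots,x_m^n)$ lies in $N$ as well; hence $u_n=v_n^{-1}\big(b_2(x_1^n,\ldots,x_m^n)^{-1}\,v_n\,b_2(x_1^n,\ldots,x_m^n)\big)\in N$. Thus all $d+1$ commutator entries lie in $N$, and in particular $w(d,n)=\big(1,V_d(x_1^n,\ldots,x_m^n)\big)\in\{1\}\times N\subseteq\Delta_{\mathcal{A}}$.

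Next I would apply Corollary~\ref{mainthm2-fiberproducts} with $\Gamma=F_m$, $N=\langle\langle\mathcal{A}\rangle\rangle$, $\gamma=1$, $r=d+1$ and commutator entries $u_n,v_n,\ldots,v_n$: the requirement $r\geq d+1$ holds with equality and all entries lie in $N$ by the first step, so there is a constant $C>1$ depending only on $\rho$ with
$$\Big|\Big|\mu\big(\rho(w(d,n))\big)\Big|\Big|_{\mathbb{E}}\leq 2^{d+1}C\big(|u_n|_{F_m}+d\,|v_n|_{F_m}\big).$$
Since $v_n$ and $b_2(x_1^n,\ldots,x_m^n)$ arise from the fixed words $v$ and $b_2$ by substituting $x_i\mapsto x_i^n$, we have $|v_n|_{F_m}\leq n\,|v|_{F_m}$ and $|u_n|_{F_m}\leq 2n\big(|v|_{F_m}+|b_2|_{F_m}\big)$, so there is a constant $C_1>0$ depending only on $\rho$ and $d$ with $\big|\big|\mu(\rho(w(d,n)))\big|\big|_{\mathbb{E}}\leq C_1 n$ for every $n\in\mathbb{N}$.

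Finally, Lemma~\ref{estimate2} with $q=d$ provides $c>0$ with $|w(d,n)|_{\mathcal{A}}\geq c\,n^{1+\varepsilon}$, where $\varepsilon=\frac{1}{rd+r+w(b_2)-1}$. Writing $B:=rd+r+w(b_2)$, this says $|w(d,n)|_{\mathcal{A}}\geq c\,n^{B/(B-1)}$, equivalently $n\leq c^{-(B-1)/B}\,|w(d,n)|_{\mathcal{A}}^{1-1/B}$, and combining with the bound from the previous paragraph gives
$$\Big|\Big|\mu\big(\rho(w(d,n))\big)\Big|\Big|_{\mathbb{E}}\leq C_1 n\leq C_1\,c^{-(B-1)/B}\,\big|w(d,n)\big|_{\mathcal{A}}^{1-\frac{1}{rd+r+w(b_2)}},$$
which is the claimed inequality with $C_\rho:=C_1\,c^{-(B-1)/B}$.

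Since Corollary~\ref{mainthm2-fiberproducts} and Lemma~\ref{estimate2} carry the real content, what remains is bookkeeping; the only delicate point is the first step, where $V_d$ must be grouped as $\big[u_n,v_n,\ldots,v_n\big]$ with $d+1$ entries, all contained in $N$. The naive grouping with $d+2$ entries $v_n,\,b_2(x_1^n,\ldots,x_m^n),\,v_n,\ldots,v_n$ is useless here, since $b_2(x_1^n,\ldots,x_m^n)$ need not belong to $\langle\langle\mathcal{A}\rangle\rangle$, whereas Corollary~\ref{mainthm2-fiberproducts} requires every commutator entry to lie in $N$; with the correct grouping the number of entries is exactly $d+1$, the borderline case $r=d+1$ permitted by that corollary.
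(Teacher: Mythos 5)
Your proof is correct and follows essentially the same approach as the paper: bound $\big|\big|\mu(\rho(w(d,n)))\big|\big|$ linearly in $n$ via the commutator estimate (the paper invokes $(d+1)$-normality from Theorem~\ref{mainthm2}; you equivalently invoke Corollary~\ref{mainthm2-fiberproducts}), and combine with the lower bound from Lemma~\ref{estimate2} for $q=d$. Your explicit remark that $V_d$ must be grouped as a $(d+1)$-fold commutator $[u_n,v_n,\ldots,v_n]$ with all entries in $N$ (rather than a $(d+2)$-fold one involving $b_2(x_1^n,\ldots,x_m^n)\notin N$) is precisely what the paper's displayed estimate tacitly does, so the two arguments coincide.
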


\begin{proof} By applying Lemma \ref{estimate2} for $q=d$, there exists $C_1>0$ such that \begin{align} \label{nonqie1-eq1} \Big|\big(1,V_{d}(x_1^n,\ldots,x_m^n)\big)\Big|_{\mathcal{A}}^{1-\frac{1}{rd+r+w(b_2)}} \geq C_1 n\end{align} for every $n\in \mathbb{N}$. Theorem \ref{mainthm2} implies that the representation $\rho$ is $(d+1)$-normal. By using (\ref{nonqie1-eq1}) there exist constants $C_2>0$, independent of $n\in \mathbb{N}$, such that for all but finitely many $n \in \mathbb{N}$ we have \begin{align*} \Big|\Big| \mu\big(\rho \big(1,V_{d}(x_1^n,\ldots,x_m^n)\big)\big)\Big|\Big|_{\mathbb{E}}&\leq2^{d+1} C_2 \Big( d\big| v(x_1^n,\ldots,x_m^n)\big|_{F_m}+\big|[v(x_1^n\ldots,x_m^n),b_2(x_1^n,\ldots,x_m^n)]\big|_{F_{m}}\Big)\\ & \leq 2^{d+1} C_2C_{d} n\\ & \leq \frac{2^{d+1} C_2 C_{d}}{C_1}\Big|\big(1,V_{d}\big(x_1^n,\ldots,x_m^n\big)\big)\Big|_{\mathcal{A}}^{1-\frac{1}{rd+r+w(b_2)}}\end{align*} where $C_{d}:=(d+2)|v|_{F_m}+2|b_2|_{F_m}$.\end{proof}
\medskip

The following corollary follows immediately from Theorem \ref{nonqie1gen}. Recall that $\gamma_{r}(F_m)$ denotes the $r$-th term in the lower central series of $F_m$.

\begin{corollary} Let $m,r\geq 2$ be two integers. The fiber product $F_m \times_{\gamma_r(F_m)}F_m$ of $F_m$ with respect to $\gamma_r(F_m)$ does not admit a linear representation over a local field which is a quasi-isometric embedding.\end{corollary}

In particular, by Proposition \ref{comm}, $F_m\times F_m$ contains infinitely many non-commensurable finitely generated subgroups satisfying the conclusion of the previous corollary.
\medskip

We now state and prove a more general version of Theorem \ref{nonqie2}.

\begin{theorem} \label{nonqie2gen} Let \hbox{$v=\big[b_1,b_2\big]$} be a basic commutator of weight $r \geq 2$, where $b_1,b_2\in F_m$ are basic commutators and $b_2<b_1$. Suppose that $\mathcal{B}$ is a finite subset of $[F_m,F_m]$ such that the normal subgroup $\langle \langle \mathcal{B}\rangle \rangle$ of $F_m$ contains the element $v(x_1^n,\ldots,x_m^n)$ for every $n\in \mathbb{N}$. Let $Q_{\mathcal{B}}=\langle x_1,\ldots,x_m \big| \mathcal{B} \big\rangle$ and $\Gamma_{\mathcal{B}}$ be a $C'(\frac{1}{6})$ small cancellation group provided \hbox{by Theorem \ref{Rips} such that} $$1 \rightarrow N_{\mathcal{B}} \rightarrow \Gamma_{\mathcal{B}} \rightarrow Q_{\mathcal{B}} \rightarrow 1$$ is a short exact sequence and $N_{\mathcal{B}}$ is finitely generated. Consider $P_{\mathcal{B}} <\Gamma_{\mathcal{B}} \times \Gamma_{\mathcal{B}}$ the fiber product of $\Gamma_{\mathcal{B}}$ with respect to $N_{\mathcal{B}}$ and fix a word length function $|\cdot|_{\mathcal{B}}:P_{\mathcal{B}} \rightarrow \mathbb{N}$. For every representation $\rho:P_{\mathcal{B}} \rightarrow \mathsf{GL}_d(k)$ there exists $C_{\rho}>0$ such that for every $n\in \mathbb{N}$: \begin{equation} \Big|\Big|\mu \big(\rho\big(\delta(d,n)\big) \big)\Big|\Big|_{\mathbb{E}} \leq C_{\rho}\big|\delta(d,n)\big|_{\mathcal{B}}^{1-\frac{1}{rd+r+w(b_2)}} \end{equation} where $\delta(d,n):=\big(1,V_d\big(\overline{x}_1^n,\ldots, \overline{x}_m^n\big)\big)\in N_{\mathcal{B}}$ is defined as in Lemma \ref{estimate3}.
\end{theorem}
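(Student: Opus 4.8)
The plan is to mirror the proof of Theorem \ref{nonqie1gen}, substituting the Rips-construction word-length estimate of Lemma \ref{estimate3} for the free-group estimate of Lemma \ref{estimate2} and invoking the fiber-product Cartan bound of Corollary \ref{mainthm2-fiberproducts}. The target inequality is obtained by eliminating $n$ between a lower bound $|\delta(d,n)|_{\mathcal{B}}\gtrsim n^{1+\varepsilon}$ from Lemma \ref{estimate3} applied with $q=d$ (so $\varepsilon=\tfrac{1}{rd+r+w(b_2)-1}$) and an upper bound $\big\|\mu(\rho(\delta(d,n)))\big\|_{\mathbb{E}}\lesssim n$; the exponent $1-\tfrac{1}{rd+r+w(b_2)}$ appearing in the statement is exactly the power for which $\big(n^{1+\varepsilon}\big)^{1-1/(rd+r+w(b_2))}$ is comparable to $n$.

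For the upper bound I would first record, as in the proof of Lemma \ref{estimate3}, that $v(\overline{x}_1^n,\ldots,\overline{x}_m^n)\in N_{\mathcal{B}}$ (since $v(x_1^n,\ldots,x_m^n)\in\langle\langle\mathcal{B}\rangle\rangle$ and $\pi(\langle\langle\mathcal{B}\rangle\rangle)\subset N_{\mathcal{B}}$), so that $\delta(d,n)=(1,V_d(\overline{x}_1^n,\ldots,\overline{x}_m^n))$ genuinely lies in $\{1\}\times N_{\mathcal{B}}\subset P_{\mathcal{B}}$. The only delicate point is that Corollary \ref{mainthm2-fiberproducts} requires every entry of the iterated commutator to lie in $N_{\mathcal{B}}$, whereas the basic commutator $b_2(\overline{x}_1^n,\ldots,\overline{x}_m^n)$ need not. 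I would therefore not regard $b_2$ as a separate commutator entry, but instead write $V_d(\overline{x}_1^n,\ldots,\overline{x}_m^n)=[z_1,z_2,\ldots,z_{d+1}]$ as an iterated commutator of length $d+1$, with $z_1=[v(\overline{x}_1^n,\ldots,\overline{x}_m^n),b_2(\overline{x}_1^n,\ldots,\overline{x}_m^n)]$ and $z_2=\cdots=z_{d+1}=v(\overline{x}_1^n,\ldots,\overline{x}_m^n)$; here $z_1\in N_{\mathcal{B}}$ by normality of $N_{\mathcal{B}}$ in $\Gamma_{\mathcal{B}}$, and $d+1\geq d+1$, so Corollary \ref{mainthm2-fiberproducts} (with $\gamma=1$ and $r=d+1$) gives a constant $C>1$ depending only on $\rho$ with $\big\|\mu(\rho(\delta(d,n)))\big\|_{\mathbb{E}}\leq 2^{d+1}C\big(|z_1|_{\Gamma_{\mathcal{B}}}+d\,|v(\overline{x}_1^n,\ldots,\overline{x}_m^n)|_{\Gamma_{\mathcal{B}}}\big)$. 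Since $v$ and $b_2$ are fixed words in $x_1^{\pm1},\ldots,x_m^{\pm1}$ and the $\overline{x}_i$ are among the generators of $\Gamma_{\mathcal{B}}$, each of these word lengths is bounded by a constant multiple of $n$, so the right-hand side is at most $C_2 n$ for some $C_2$ depending only on $\rho,d,v,b_2$.

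Finally I would combine this with Lemma \ref{estimate3} for $q=d$: from $|\delta(d,n)|_{\mathcal{B}}\geq C_0 n^{1+\varepsilon}$ and $1-\tfrac{1}{rd+r+w(b_2)}=\tfrac{rd+r+w(b_2)-1}{rd+r+w(b_2)}$ one gets $|\delta(d,n)|_{\mathcal{B}}^{\,1-1/(rd+r+w(b_2))}\geq C_0' n$, hence $\big\|\mu(\rho(\delta(d,n)))\big\|_{\mathbb{E}}\leq C_2 n\leq (C_2/C_0')\,|\delta(d,n)|_{\mathcal{B}}^{\,1-1/(rd+r+w(b_2))}$ for all sufficiently large $n$; the finitely many remaining values of $n$ (where $\delta(d,n)$ may be trivial, in which case both sides vanish, or else small) are absorbed into $C_{\rho}$. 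The genuine difficulty has already been handled in Lemma \ref{estimate3} — namely the linear-in-$n$ lower bound on $|\delta(d,n)|_{\mathcal{B}}$, which rests on the undistortedness of $F_{m+p}\times_{\langle\langle\mathcal{R}\rangle\rangle}F_{m+p}$ in $F_{m+p}\times F_{m+p}$ together with the lower-central-series rewriting estimates of Lemmas \ref{comm2}, \ref{rewrite} and \ref{estimate1} — so the work left here is the short bookkeeping above, the one subtlety being the regrouping of $v$ and $b_2$ into a single entry $z_1$ so as to stay inside $N_{\mathcal{B}}$.
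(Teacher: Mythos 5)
Your proof is correct and follows essentially the same route as the paper: combine the Lemma \ref{estimate3} lower bound $|\delta(d,n)|_{\mathcal{B}}\gtrsim n^{1+\varepsilon}$ (with $q=d$) against the linear-in-$n$ upper bound on the Cartan projection coming from Theorem \ref{mainthm2}/Corollary \ref{mainthm2-fiberproducts}, and eliminate $n$. Your remark that $b_2(\overline{x}_1^n,\ldots,\overline{x}_m^n)$ need not lie in $N_{\mathcal{B}}$, so that one must regroup $z_1=[v,b_2]$ (which does lie in $N_{\mathcal{B}}$ by normality) and apply the bound with $r=d+1$, is exactly the reading needed to make the paper's terse invocation of $(d+1)$-normality rigorous, and matches the parallel computation in the paper's proof of Theorem \ref{nonqie1gen}.
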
 

\begin{proof} First, let us note that the presentation of $\Gamma_{\mathcal{B}}$ (see (\ref{Rips group})) is of the form (\ref{estimate3-pr}). Therefore, by Lemma \ref{estimate3} there exists $C_3>0$ such that \begin{align} \label{nonqie2-eq1} \Big|\big(1,V_{d}\big(\overline{x}_1^n,\ldots,\overline{x}_m^n\big)\big)\Big|_{\mathcal{B}}^{1-\frac{1}{rd+r+w(b_2)}} \geq C_3 n\end{align} for every $n \in \mathbb{N}$. By Theorem \ref{mainthm2}, the representation $\rho$ is $(d+1)$-normal and by using (\ref{nonqie2-eq1}), there exist $C_4>0$, independent of $n$, such that for all but finitely many $n \in \mathbb{N}$ we have $$\Big|\Big| \mu\big(\rho \big(1,V_{d}(\overline{x}_1^n,\ldots,\overline{x}_m^n)\big)\big)\Big|\Big|_{\mathbb{E}} \leq C_4n \leq  \frac{C_4}{C_3}\Big|\big(1,V_{d}\big(\overline{x}_1^n,\ldots,\overline{x}_m^n\big)\big)\Big|_{\mathcal{B}}^{1-\frac{1}{rd+r+w(b_2)}}.$$ The proof of the theorem is complete.\end{proof}
 
The following corollary follows directly from the previous theorem.

\begin{corollary}\label{nil-0} Let $m,r\geq 2$ integers and $\Gamma_{m,r}$ a $C'(\frac{1}{6})$ small cancellation group, provided by Theorem \ref{Rips}, such that $$1 \rightarrow N_{m,r} \rightarrow \Gamma_{m,r} \rightarrow F_m/\gamma_r(F_m) \rightarrow 1$$ is a short exact sequence with $N$ finitely generated. The fiber product $P_{m,r}:=\Gamma_{m,r} \times_{N_{m,r}}\Gamma_{m,r}$ does not admit a linear representation over a local field which is a quasi-isometric embedding.\end{corollary}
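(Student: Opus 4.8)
The plan is to recognize $P_{m,r}$ as one of the fiber products $P_{\mathcal{B}}$ covered by Theorem~\ref{nonqie2gen} and then to rule out a quasi-isometric embedding by playing the sublinear upper bound of that theorem against the superlinear lower bound of Lemma~\ref{estimate3}. First I would fix the combinatorial data. Since $m\geq 2$ and $r\geq 2$, the free abelian group $A_r^m=\gamma_r(F_m)/\gamma_{r+1}(F_m)$ is nonzero, so the set $\mathcal{E}_r$ of basic commutators of weight $r$ is nonempty; fix one such $v=[b_1,b_2]$, where by definition $b_1,b_2$ are basic commutators with $b_2<b_1$ and $w(b_1)+w(b_2)=r$. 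Since $F_m/\gamma_r(F_m)$ is a finitely generated nilpotent group, it is finitely presented, so $\gamma_r(F_m)$ is the normal closure in $F_m$ of some finite set $\mathcal{B}_0$, necessarily $\mathcal{B}_0\subset\gamma_r(F_m)\subseteq[F_m,F_m]$ because $r\geq 2$. Setting $\mathcal{B}:=\mathcal{B}_0\cup\{v\}$ produces a finite subset of $[F_m,F_m]$ that contains the basic commutator $v$ of weight $r$ and still satisfies $\langle\langle\mathcal{B}\rangle\rangle=\gamma_r(F_m)$; consequently $Q_{\mathcal{B}}=\langle x_1,\dots,x_m\mid\mathcal{B}\rangle=F_m/\gamma_r(F_m)$ and $v(x_1^n,\dots,x_m^n)\in\gamma_r(F_m)=\langle\langle\mathcal{B}\rangle\rangle$ for every $n$, so the hypotheses of both Theorem~\ref{nonqie2gen} and Lemma~\ref{estimate3} are satisfied.

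Next I would run Rips' construction (Theorem~\ref{Rips}) on $Q_{\mathcal{B}}=F_m/\gamma_r(F_m)$; it outputs a $C'(\tfrac16)$ small cancellation, hence Gromov hyperbolic, group $\Gamma_{\mathcal{B}}$ together with a $2$-generated, in particular finitely generated, normal subgroup $N_{\mathcal{B}}$ and a short exact sequence $1\to N_{\mathcal{B}}\to\Gamma_{\mathcal{B}}\to F_m/\gamma_r(F_m)\to 1$. This is precisely the group $\Gamma_{m,r}$ of the statement, $N_{\mathcal{B}}=N_{m,r}$, and $P_{\mathcal{B}}=\Gamma_{m,r}\times_{N_{m,r}}\Gamma_{m,r}=P_{m,r}$. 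Now suppose, for contradiction, that $P_{m,r}=P_{\mathcal{B}}$ admitted a quasi-isometric embedding $\rho\colon P_{\mathcal{B}}\to\mathsf{GL}_d(k)$ for some $d\in\mathbb{N}$ and some local field $k$; let $C>1$ satisfy $C^{-1}|\gamma|_{\mathcal{B}}-C\leq\|\mu(\rho(\gamma))\|_{\mathbb{E}}$ for all $\gamma\in P_{\mathcal{B}}$. Apply this to the elements $\delta(d,n)=\big(1,V_d(\overline{x}_1^n,\dots,\overline{x}_m^n)\big)\in N_{\mathcal{B}}$ of Lemma~\ref{estimate3}: by that lemma there is $c>0$ with $|\delta(d,n)|_{\mathcal{B}}\geq c\,n^{1+\varepsilon}$, where $\varepsilon=\tfrac{1}{rd+r+w(b_2)-1}>0$, so $|\delta(d,n)|_{\mathcal{B}}\to\infty$, while Theorem~\ref{nonqie2gen} supplies $C_\rho>0$ with
\[
\big\|\mu\big(\rho(\delta(d,n))\big)\big\|_{\mathbb{E}}\leq C_\rho\,|\delta(d,n)|_{\mathcal{B}}^{\,1-\frac{1}{rd+r+w(b_2)}}.
\]
Combining these, $C^{-1}|\delta(d,n)|_{\mathcal{B}}-C\leq C_\rho\,|\delta(d,n)|_{\mathcal{B}}^{\,1-\frac{1}{rd+r+w(b_2)}}$, which is absurd for all large $n$ because the left-hand side is linear and the right-hand side strictly sublinear in the diverging quantity $|\delta(d,n)|_{\mathcal{B}}$. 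Hence no such $\rho$ exists, and $P_{m,r}$ admits no quasi-isometric embedding into any general linear group over a local field.

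I do not anticipate a real obstacle here: the statement is essentially a repackaging of Theorem~\ref{nonqie2gen} together with Lemma~\ref{estimate3}, and it parallels the free-group corollary deduced from Theorem~\ref{nonqie1gen}. The only point deserving attention is the first step --- arranging a finite set $\mathcal{B}\subset[F_m,F_m]$ that simultaneously normally generates $\gamma_r(F_m)$ and contains a basic commutator of weight $r$ --- together with the routine verification that Rips' construction applied to $F_m/\gamma_r(F_m)$ indeed yields the group $\Gamma_{m,r}$ of the statement with $N_{m,r}$ finitely generated and with a presentation of the form required by Lemma~\ref{estimate3}.
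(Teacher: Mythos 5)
Your proof is correct and matches the paper's intended argument: Corollary \ref{nil-0} is stated as following directly from Theorem \ref{nonqie2gen}, and you supply exactly the missing details (choosing a weight-$r$ basic commutator $v=[b_1,b_2]$ and a finite set $\mathcal{B}\subset[F_m,F_m]$ normally generating $\gamma_r(F_m)$, then combining the sublinear Cartan bound of Theorem \ref{nonqie2gen} with the superlinear word-length lower bound from Lemma \ref{estimate3} to contradict any would-be quasi-isometric embedding). One small redundancy: adjoining $v$ to $\mathcal{B}_0$ is unnecessary, since Theorem \ref{nonqie2gen} only asks that $v(x_1^n,\ldots,x_m^n)\in\langle\langle\mathcal{B}\rangle\rangle$ for all $n$, and this holds automatically for any finite set normally generating $\gamma_r(F_m)$ because $x_i\mapsto x_i^n$ is an endomorphism of $F_m$ that preserves $\gamma_r(F_m)$.
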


Since the free nilpotent group $F_m/\gamma_r(F_m)$ is of finite type, it follows by the the 1-2-3-Theorem in \cite{BBMS} that $P_{m,r}$ is a finitely presented group for every $r,m\in \mathbb{N}$. We close this section with the proof of Theorems \ref{nonqie1} \& \ref{nonqie2}.

\medskip
\noindent {\em Proof of Theorems \ref{nonqie1}} \& {\em\ref{nonqie2}}. Note that for any group $\mathsf{H}$ and $g,h\in \mathsf{H}$, the normal closure $\langle \langle [g,h]\rangle \rangle$ in $\mathsf{H}$ contains the element $[g^n,h^n]$ for every $n\in \mathbb{N}$. Now Theorem \ref{nonqie1} and Theorem \ref{nonqie2} follow immediately by applying Theorem \ref{nonqie1gen} and Theorem \ref{nonqie2gen} for $r=2$ and $v(x_1,\ldots,x_m)=[x_j,x_p]$ respectively. $\qed$

\section{A finitely presented example} \label{Examples}
\par The fiber products provided by Theorem \ref{nonqie1} and Theorem \ref{nonqie1gen} are not finitely presented by a result of Grunewald \cite{Grunewald} (see also \cite{BR}). However, Theorem \ref{nonqie2gen} provides several finitely presented examples. We provide here the a presentation of the simplest example $P_{2,2}$ from Corollary \ref{nil-0}.

\begin{Example}\label{exmp-fp}\normalfont{Let $\mathbb{Z}^2=\big \langle a,b \ \big|[a,b] \big\rangle$ be the free abelian group of rank $2$ and consider the word $$V_r(x,y):=xy^{100r+1}xy^{100r+2}\cdots xy^{100r+100}$$ $r\in \mathbb{N}\cup \{0\}$, on the letters $\{x,y\}$. By applying the Rips construction for the given presentation of $Q=\mathbb{Z}^2$ we obtain the Gromov hyperbolic group $$\Gamma_{2,2}=\sbox0{$a,b,x,y \ \Bigg|\ \begin{matrix}[1.2]
[a,b]V_0(x,y) &axa^{-1}V_1(x,y) &a^{-1}xaV_2(x,y) \\ 
 bxb^{-1}V_3(x,y) &b^{-1}xbV_4(x,y) &aya^{-1}V_5(x,y) \\ 
 a^{-1}yaV_6(x,y) &byb^{-1}V_7(x,y) &b^{-1}ybV_8(x,y)
\end{matrix}$}
\mathopen{\resizebox{1.5\width}{\ht0}{$\Bigg\langle$}}
\usebox{0}
\mathclose{\resizebox{1.5\width}{\ht0}{$\Bigg\rangle$}}$$ where $N_{2,2}=\langle x,y \rangle$ is normal in $\Gamma_{2,2}$ and $1\rightarrow N_{2,2} \rightarrow \Gamma_{2,2} \rightarrow \mathbb{Z}^2 \rightarrow 1$ is exact. The fiber product $P_{2,2}=\Gamma_{2,2} \times_{N_{2,2}} \Gamma_{2,2}$ is finitely presented and a finite presentation of $P_{2,2}$\footnote{The generators $a,b,x_L,y_L, x_R,y_R$ of $P_{2,2}$ correspond to the elements $(a,a),(b,b),(x,1),(y,1),(1,x), (1,y)$ respectively.} is obtained by \cite[Thm. 2.2]{BBMS} as follows: $$P_{2,2}=\Big \langle a,b,x_L,y_L,x_R,y_R\ \big |\ \mathcal{R} \Big \rangle$$ where the set of relations $\mathcal{R}$ consists of the following $21$ words\footnote{Since second homotopy module of the given presentation of $\mathbb{Z}^2$ is trivial, there are no relations of the form $Z_{\sigma}$ given by part (4) of Theorem 2.2. in \cite{BBMS}.}: $$ \begin{matrix}[1.3] [a,b]V_0(x_L,y_L)V_0(x_R,y_R) &[x_L,x_R],\ [x_L,y_R] &[y_L,x_R],\ [y_L,y_R] \\
ax_La^{-1}V_1(x_L,y_L) &ax_Ra^{-1}V_1(x_R,y_R) &a^{-1}x_LaV_2(x_L,y_L) &a^{-1}x_RaV_2(x_R,y_R) &\\
ay_La^{-1}V_5(x_L,y_L) &ay_Ra^{-1}V_5(x_R,y_R) &a^{-1}y_LaV_6(x_L,y_L) &a^{-1}y_RaV_6(x_R,y_R) &\\
bx_Lb^{-1}V_3(x_L,y_L) &bx_Rb^{-1}V_3(x_R,y_R) &b^{-1}x_LbV_4(x_L,y_L) &b^{-1}x_RbV_4(x_R,y_R)&\\
by_Lb^{-1}V_7(x_L,y_L) &by_Rb^{-1}V_7(x_R,y_R) &b^{-1}y_LbV_8(x_L,y_L) &b^{-1}y_RbV_8(x_R,y_R).\\
\end{matrix}$$}\end{Example}

\noindent The group $P_{2,2}$ is linear (since $\Gamma_{2,2}$ is, see \cite{Agol, Wise}) and it does not admit a quasi-isometric embedding into any general linear group over a local field.

\section{Examples from Grothendieck pairs} \label{Gr-pairs} 
In this section we discuss some known constructions of fiber products which also fail to admit quasi-isometric embedding into any general linear group over a local field.
\par For a discrete group $\mathsf{H}$ denote by $\hat{\mathsf{H}}$ its profinite completion. Suppose that $\mathsf{H}$ is residually finite group and $\mathsf{K}$ is a subgroup of $\mathsf{H}$. Following \cite{BR}, we say that the pair $(\mathsf{K},\mathsf{H})$ is a {\em Grothendieck pair} if the inclusion $\iota:\mathsf{K} \xhookrightarrow{}\mathsf{H}$ is not an isomorphism while the induced map $\hat{\iota}:\hat{\mathsf{K}}\rightarrow \hat{\mathsf{H}}$ between profinite completions is.

\subsection{Platonov--Tavgen criterion} Fiber products have been previously used for the construction of Grothendieck pairs. Platonov--Tavgen \cite{PT} constructed the first examples of Grothendieck pairs $(\mathsf{K},\mathsf{H})$ of finitely generated residually finite groups. More precisely, their methods provide the following criterion for the existence of Grothendieck pairs between fiber products, see also \cite[Thm. 6.3]{Bass-Lubotzky} and \cite[Thm. 5.1]{Bridson-Grunewald}.

\begin{theorem}\textup{(Platonov--Tavgen \cite{PT})}  \label{PT-criterion} Let $1\rightarrow N \rightarrow \Gamma \rightarrow Q\rightarrow 1$ be a short exact sequence of groups, where $\Gamma$ is finitely generated, and set $P=\Gamma \times_N \Gamma$. Suppose that $Q$ has no non-trivial finite quotient and $H_2(Q,\mathbb{Z})=0$. Then the inclusion $\iota: P \xhookrightarrow{} \Gamma \times \Gamma$ induces an isomorphism $\hat{\iota}:\hat{P}\rightarrow \hat{\Gamma}\times \hat{\Gamma}$ of profinite groups.\end{theorem}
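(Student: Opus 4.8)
The plan is to deduce the Platonov--Tavgen criterion from the universal property of the profinite completion together with a homological obstruction computation. First I would recall the functorial setup: the inclusion $\iota: P \hookrightarrow \Gamma \times \Gamma$ induces $\hat\iota: \hat P \to \widehat{\Gamma \times \Gamma} = \hat\Gamma \times \hat\Gamma$, and one must show this is both surjective and injective as a map of profinite groups. Surjectivity is the easier half: the image of $P$ in $\Gamma \times \Gamma$ contains the diagonal $\mathrm{diag}(\Gamma)$ and $\{1\} \times N$, so it suffices to see that the closure of $\iota(P)$ in $\hat\Gamma \times \hat\Gamma$ is everything. Since $\Gamma/N = Q$ has no non-trivial finite quotient, $\hat Q$ is trivial, hence the closure of $N$ in $\hat\Gamma$ is all of $\hat\Gamma$; combining this with the presence of the closed diagonal $\mathrm{diag}(\hat\Gamma)$, the subgroup generated by $\mathrm{diag}(\hat\Gamma)$ and $\{1\} \times \hat\Gamma$ is $\hat\Gamma \times \hat\Gamma$, giving surjectivity of $\hat\iota$.

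For injectivity, the plan is to show that $\hat\iota$ is injective by producing a continuous section, or equivalently by showing that every finite quotient of $P$ factors through $\Gamma \times \Gamma$. Concretely, given a finite group $F$ and a homomorphism $\phi: P \to F$, one wants to extend (the relevant data of) $\phi$ to $\Gamma \times \Gamma$. Consider the two embeddings $\Gamma \hookrightarrow P$ via $\gamma \mapsto (\gamma,\gamma)$ and via $N \hookrightarrow P$ as $w \mapsto (1,w)$. Restricting $\phi$ to $\{1\} \times N$ gives a homomorphism $N \to F$ that is equivariant for the conjugation action of the diagonal $\Gamma$; the obstruction to assembling these into a homomorphism $\Gamma \times \Gamma \to F'$ (for $F'$ a suitable finite group built from $F$) is measured in group cohomology. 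This is precisely where the hypothesis $H_2(Q,\mathbb{Z}) = 0$ enters: one sets up a central extension or a fiber-product-of-finite-groups argument, and the relevant obstruction class lives in $H^2(Q, -)$, which vanishes either because $H_2(Q,\mathbb{Z}) = 0$ (by universal coefficients, since the coefficient module is finite and $Q$ has no finite quotients one reduces to trivial coefficients) or because $Q$ has no finite quotients. I would follow the argument of \cite[Thm. 5.1]{Bridson-Grunewald} closely here: one shows that any two lifts $P \to F$ that agree on $\{1\}\times N$ and on $\mathrm{diag}(\Gamma)$ already determine a map on $\Gamma \times \Gamma$, using that $[\{1\}\times\Gamma, \Gamma \times \{1\}] = 1$ inside $\Gamma \times \Gamma$ and that the failure of the naive gluing is governed by a map $Q \times Q \to F$ which must be trivial.

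The main obstacle, and the conceptual heart of the proof, is the injectivity step: one must verify that the cohomological/extension-theoretic obstruction genuinely vanishes and that no finite quotient of $P$ "sees" more than $\Gamma \times \Gamma$ does. The cleanest route is the observation that $\hat P \to \hat\Gamma \times \hat\Gamma$ is injective if and only if $P$ is dense in $\hat\Gamma \times \hat\Gamma$ \emph{and} the kernel of $\hat P \to \hat\Gamma \times \hat\Gamma$ is trivial, and the latter follows once one checks that for every normal subgroup $K \trianglelefteq P$ of finite index, $K$ contains $P \cap (M \times M)$ for some finite-index normal $M \trianglelefteq \Gamma$ — and here the group $P/(P \cap (M\times M))$ is a fiber product of the finite group $\Gamma/M$ over $(\Gamma/M)\big/\overline{N}$, which is trivial since $Q \to \Gamma/M$ is trivial, so $P/(P\cap(M\times M)) = \Gamma/M \times \Gamma/M$, exactly the finite quotients of $\Gamma \times \Gamma$. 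The role of $H_2(Q,\mathbb{Z})=0$ is to guarantee, via the five-term exact sequence in homology applied to $1 \to N \to \Gamma \to Q \to 1$ combined with the vanishing of finite quotients of $Q$, that one can always shrink an arbitrary finite-index subgroup of $P$ to one of this controlled form; I would present this via the standard lemma (as in \cite{PT}, \cite{Bass-Lubotzky}) that identifies the obstruction with an element of $H^2(Q, A)$ for finite abelian $A$, which is forced to be zero.
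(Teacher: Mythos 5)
The paper does not actually prove this statement: it is quoted as a known result of Platonov--Tavgen \cite{PT}, with the reader pointed to \cite[Thm.\ 6.3]{Bass-Lubotzky} and \cite[Thm.\ 5.1]{Bridson-Grunewald} for proofs, so there is no ``paper's own proof'' to compare against. Your sketch does follow the standard argument from those references and is structurally sound. The surjectivity half is correct and complete: $\hat Q=1$ makes the closure of $N$ in $\hat\Gamma$ all of $\hat\Gamma$, and together with $\mathrm{diag}(\hat\Gamma)$ this generates $\hat\Gamma\times\hat\Gamma$. For injectivity your reduction is also correct: since subgroups of the form $M\times M$ ($M\trianglelefteq_{f.i.}\Gamma$) are cofinal among finite-index normal subgroups of $\Gamma\times\Gamma$, and $\hat Q=1$ forces $P/(P\cap(M\times M))\cong(\Gamma/M)\times(\Gamma/M)$, it suffices to show that every finite-index $K\trianglelefteq P$ contains some $P\cap(M\times M)$. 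Two small points to tighten. First, the ``either $H_2(Q,\mathbb Z)=0$ \ldots or $Q$ has no finite quotients'' should be an ``and'': in the universal-coefficients sequence for $H^2(Q,A)$ you need $H_1(Q)=Q^{\mathrm{ab}}=0$ (which follows from $\hat Q=1$ together with $Q$ being a quotient of the finitely generated $\Gamma$) to kill the $\mathrm{Ext}^1$ term, and you need $H_2(Q,\mathbb Z)=0$ to kill the $\mathrm{Hom}$ term; neither hypothesis alone suffices in general. Second, the heart of the matter --- the lemma that a $\Gamma$-equivariant epimorphism from $N$ onto a finite group extends to $\Gamma$ under these hypotheses --- is deferred to the references rather than proved, so as written this is an outline rather than a self-contained proof; that deferral is reasonable here since the paper itself treats the theorem as a black box.
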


\begin{Example} \label{PTpair}\normalfont{(Platonov--Tavgen \cite{PT1}) Let $F_4$ be the free group on four generators $\big\{a,b,c,d\big\}$. The {\em Higman group} introduced by Higman in \cite{Higman} is the group with presentation $$H=\Big \langle a,b,c,d \ \big| \ a^{-1}bab^{-2}, b^{-1}cbc^{-2}, c^{-1}dcd^{-2}, d^{-1}ada^{-2} \Big \rangle.$$ The group $H$ does not admit non-trivial finite quotients and $H_2(H,\mathbb{Z})=0$ (e.g. see \cite[Lem. 4.2]{Bridson-Grunewald}). Consider the associated short exact sequence $1\rightarrow N \rightarrow F_4\rightarrow H \rightarrow 1$ and  $P:=F_4\times_N F_4$. Since $H$ is an infinite group the fiber product $P$ is not finitely presented by \cite{Grunewald} and hence cannot be isomorphic to $F_4\times F_4$. Theorem \ref{PT-criterion} \hbox{implies that $(P, F_4\times F_4)$ is a Grothendieck pair.}}\end{Example}

\subsection{Bass--Lubotzky examples} Bass--Lubotzky in \cite{Bass-Lubotzky} constructed the first examples of Gro-\\thendieck pairs $(\mathsf{K},\mathsf{H})$ of finitely generated linear groups which are representation superrigid and $\mathsf{K}$ is not commensurable to a lattice in any product $G_1(k_1)\times \cdots \times G_{m}(k_m)$, where $G_i$ is a simple algebraic group defined over a local field $k_i$. Their examples are constructed as the pair $(\Lambda \times_{L} \Lambda, \Lambda \times \Lambda)$\footnote{Lubotzky in \cite{Lubotzky} exhibited similar examples where $\Lambda$ is a cocompact lattice in $\mathsf{Sp}(d,1)$, $d \geq 2$.}, where $\Lambda$ is a cocompact lattice in the exceptional simple rank $1$ Lie group $\mathsf{F}_{4}^{(-20)}$ and $L$ is a normal infinite index subgroup of $\Lambda$ such that the quotient $\Lambda/L$ satisfies the conditions of the Platonov--Tavgen criterion.

\subsection{Bridson--Grunewald examples} Bridson--Grunewald in \cite{Bridson-Grunewald} used fiber products in order to construct the first examples of Grothendieck pairs of finitely presented residually finite groups. Their examples  provide a negative answer to a question of Grothendieck from \cite{Grothendieck} and are constructed as follows. First, they fix a finitely presented group $Q$ with compact $K(Q,1)$ Eilenberg--MacLane space satisfying the conditions of the Platonov--Tavgen criterion (i.e. $\hat{Q}=1$ and $H_2(Q,\mathbb{Z})=0$). Then they use the construction from \cite{Rips-Wise} to obtain a short exact sequence $$1 \rightarrow N \rightarrow \Gamma \rightarrow Q \rightarrow 1$$ where $N$ is $3$-generated and $\Gamma$ is a residually finite hyperbolic group satisfying the $C'(\frac{1}{6})$ small cancellation condition. The counterexample to Grothendieck's question is the pair of residually finite groups $\big(\Gamma \times_N \Gamma, \Gamma \times \Gamma)$. The fact that the fiber product $\Gamma \times_N \Gamma$ is finitely presented follows by the 1-2-3-Theorem in \cite{BBMS}. 
\medskip

\par Let $R\neq 0$ be a commutative ring. For a group $\mathsf{H}$, denote by $\textup{Rep}_{R}(\mathsf{H})$ the category of representations of $\mathsf{H}$ into $\textup{Aut}_{R}(M)$ where $M$ is a finitely presented $R$-module. Grothendieck established a connection between the profinite completion of a finitely generated group and its category of representations. More precisely, given a group homomorphism $u:\mathsf{K} \rightarrow \mathsf{H}$ of finitely generated groups which induces an isomorphism $\hat{u}:\hat{\mathsf{K}}\rightarrow \hat{\mathsf{H}}$ of profinite groups, Grothendieck proved that the restriction functor $u^{\ast}_{R}:\textup{Rep}_{R}(\mathsf{H})\rightarrow \textup{Rep}_{R}(\mathsf{K})$ is an equivalence of categories (see \cite[Thm. 1.2]{Grothendieck}). In particular, if $(\mathsf{K},\mathsf{H})$ is a Grothendieck pair and $R=k$ is a local field, every linear representation $\rho:\mathsf{K} \rightarrow \mathsf{GL}_d(k)$ extends uniquely to a representation $\hat{\rho}: \mathsf{H} \rightarrow \mathsf{GL}_d(k)$ \hbox{(see the discussion in \cite[\S 3]{Grothendieck}).}

\par In the view of Grothedieck's theorem and the Platonov--Tavgen criterion, one deduces the following bound on the Cartan projection of linear representations of a certain class of fiber products.

\begin{theorem}\label{PTGr} Let $\Gamma$ be a Gromov hyperbolic group and fix a word length function $|\cdot|_{\Gamma}:\Gamma \rightarrow \mathbb{N}$. Suppose that $1\rightarrow N \rightarrow \Gamma\rightarrow Q\rightarrow 1$ is a short exact sequence of groups where $Q$ has no non-trivial finite quotients and $H_2(Q,\mathbb{Z})=0$. For every representation $\rho:\Gamma \times_N \Gamma \rightarrow \mathsf{GL}_d(k)$ there exist $C,c>0$ such that $$\Big| \Big| \mu \big(\rho(\gamma, \gamma w)\big) \Big|\Big|_{\mathbb{E}} \leq C\Big( \big|\gamma\big|_{\Gamma}+\big|\gamma w \big|_{\Gamma}\Big)+c$$ for every $\gamma \in \Gamma$ and $w\in N$. \end{theorem}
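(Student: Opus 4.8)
The plan is to reduce the general representation $\rho$ to a semisimple one so that Corollary \ref{lemma-semisimple1} applies, using Grothendieck's theorem to control the semisimplification. First I would invoke Theorem \ref{PT-criterion}: the hypotheses on $Q$ guarantee that the inclusion $\iota:\Gamma\times_N\Gamma\hookrightarrow\Gamma\times\Gamma$ induces an isomorphism $\hat\iota:\widehat{\Gamma\times_N\Gamma}\rightarrow\widehat{\Gamma}\times\widehat{\Gamma}$ of profinite completions, so $(\Gamma\times_N\Gamma,\Gamma\times\Gamma)$ is a Grothendieck pair. By Grothendieck's theorem (as recalled in the excerpt), the representation $\rho:\Gamma\times_N\Gamma\rightarrow\mathsf{GL}_d(k)$ then extends uniquely to a representation $\hat\rho:\Gamma\times\Gamma\rightarrow\mathsf{GL}_d(k)$; equivalently, $\rho$ is the restriction to the fiber product of a representation of the full direct product.

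Next I would pass to the semisimplification. Let $\rho^{ss}:\Gamma\times\Gamma\rightarrow\mathsf{GL}_d(k)$ be a semisimple representation whose composition factors agree with those of $\hat\rho$ (obtained by choosing a composition series and replacing $\hat\rho$ by the direct sum of the successive quotients). Since $\rho^{ss}(g)$ is obtained from $\hat\rho(g)$ by conjugating into block-upper-triangular form and then deleting the strictly-upper entries, and since the norm of the Cartan projection is (up to an additive constant depending only on $d$) the logarithm of the operator norm plus the logarithm of the inverse operator norm, one has for every $g\in\Gamma\times\Gamma$ a bound
\[
\Big|\Big|\mu\big(\rho^{ss}(g)\big)\Big|\Big|_{\mathbb{E}}\leq\Big|\Big|\mu\big(\hat\rho(g)\big)\Big|\Big|_{\mathbb{E}}+c_0,
\]
with $c_0$ depending only on $\hat\rho$ (the conjugating matrix and its inverse contribute a fixed multiplicative constant to all singular values, and deleting blocks can only decrease the largest singular value of a block-triangular matrix while increasing the smallest; the additive constant absorbs the determinant correction). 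Restricting to $P=\Gamma\times_N\Gamma$, it therefore suffices to bound $\big|\big|\mu\big(\rho^{ss}(\gamma,\gamma w)\big)\big|\big|_{\mathbb{E}}$ in terms of $|\gamma|_\Gamma+|\gamma w|_\Gamma$.

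Now I would apply Corollary \ref{lemma-semisimple1} to the semisimple representation $\rho^{ss}|_P:P\rightarrow\mathsf{GL}_d(k)$: there exist $C_1,c_1>0$ depending only on $\rho^{ss}$ such that
\[
\Big|\Big|\mu\big(\rho^{ss}(\gamma,\gamma w)\big)\Big|\Big|_{\mathbb{E}}\leq C_1\Big(\big|\gamma\big|_\Gamma+\big|\gamma w\big|_\Gamma\Big)+c_1
\]
for every $\gamma\in\Gamma$ and $w\in[N,N]$. To remove the restriction $w\in[N,N]$ and get the full $w\in N$, I would use that $\Gamma$ is Gromov hyperbolic: in particular $H_2(Q,\mathbb{Z})=0$ forces $N/[N,N][\Gamma,N]\cong\ldots$; more directly, the short exact sequence and $H_2(Q,\mathbb{Z})=0$ give (via the five-term exact sequence in homology) that $N\subseteq[\Gamma,\Gamma]\cdot[N,N]$, and combining the diagonal estimate $\big|\big|\mu(\rho^{ss}(\gamma,\gamma))\big|\big|_{\mathbb{E}}\leq C_0'|\gamma|_\Gamma$ with Theorem \ref{mainthm-general} (which already bounds $\big|\big|\mu(\rho^{ss}(1,[w_1,\ldots,w_r]))\big|\big|$) handles the commutator part of $w$, while the estimate of Corollary \ref{lemma-semisimple1} is really only needed on $[N,N]$; writing $w$ as a product of a bounded-in-$\Gamma$-length element and commutators, and using $|\gamma w|_\Gamma\leq|\gamma|_\Gamma+|w|_\Gamma$, yields the claimed bound for all $w\in N$. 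Combining the three displayed inequalities and renaming constants completes the proof.

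The main obstacle I expect is the passage from $\hat\rho$ to its semisimplification with an \emph{additive} control on the Cartan projection that is uniform over the whole group — that is, justifying the inequality $\big|\big|\mu(\rho^{ss}(g))\big|\big|_{\mathbb{E}}\leq\big|\big|\mu(\hat\rho(g))\big|\big|_{\mathbb{E}}+c_0$ with $c_0$ independent of $g$. The subtlety is that $\hat\rho(g)$ and $\rho^{ss}(g)$ differ by the unipotent/strictly-upper part, which is \emph{not} bounded; one must argue at the level of singular values of block-triangular matrices (the top singular value of a block-triangular matrix dominates that of each diagonal block, and the bottom singular value is dominated by that of each diagonal block), and then observe that it is enough to bound $\big|\big|\mu(\rho^{ss}(\gamma,\gamma w))\big|\big|$ since $\mu(\hat\rho(\cdot))\geq\mu(\rho^{ss}(\cdot))$ only in one direction — but we only need the upper bound on $\mu(\rho(\cdot))=\mu(\hat\rho|_P(\cdot))$, and for that direction a block-triangular matrix has operator norm comparable to the max of its diagonal blocks' operator norms only up to a factor depending on the off-diagonal blocks, which are \emph{not} uniformly bounded. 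The correct route, then, is to bound $\big|\big|\mu(\hat\rho(g))\big|\big|$ directly rather than via $\rho^{ss}$: apply Theorem \ref{mainthm-general}/Corollary \ref{mainthm2-fiberproducts} to $\hat\rho|_P$ to control the commutator directions, use the diagonal estimate for $(\gamma,\gamma)$, and apply Lemma \ref{mainthm-semisimple0} to a semisimple representation dominating $\hat\rho$ only where semisimplicity is genuinely used; I would present the argument in that order to avoid the false block-triangular comparison.
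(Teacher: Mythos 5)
You correctly identify the two key tools: the Platonov--Tavgen criterion (Theorem \ref{PT-criterion}) shows $(\Gamma\times_N\Gamma,\Gamma\times\Gamma)$ is a Grothendieck pair, and Grothendieck's theorem then extends $\rho$ to $\hat\rho:\Gamma\times\Gamma\rightarrow\mathsf{GL}_d(k)$. But everything after that in your proposal is an unnecessary detour, and the detour itself is flawed in exactly the way you suspect. Once you have $\hat\rho$ the proof is already finished: for any representation $\sigma$ of a finitely generated group $G$, sub-multiplicativity of singular values gives $\big|\big|\mu(\sigma(g))\big|\big|_{\mathbb{E}}\leq C\big|g\big|_{G}+c$ for all $g\in G$, with $C$ controlled by the Cartan projections of the generators. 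Applying this to $\hat\rho$ on the finitely generated group $\Gamma\times\Gamma$, and noting that the word length of $(\gamma,\gamma w)$ in $\Gamma\times\Gamma$ is (up to constants) $|\gamma|_\Gamma+|\gamma w|_\Gamma$, gives the stated bound immediately. No semisimplification, no appeal to Corollary \ref{lemma-semisimple1}, and no commutator estimates from Theorem \ref{mainthm2} are needed.

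Your attempt to pass to the semisimplification $\rho^{ss}$ and invoke Corollary \ref{lemma-semisimple1} goes in the wrong direction: $\big|\big|\mu(\rho^{ss}(g))\big|\big|_{\mathbb{E}}$ is in general \emph{smaller} than $\big|\big|\mu(\hat\rho(g))\big|\big|_{\mathbb{E}}$, so an upper bound on the former gives no control on the latter, which is what is needed. You recognize this at the end, but the repair you then sketch---splitting $w\in N$ into a commutator part and applying Theorem \ref{mainthm-general} or Corollary \ref{mainthm2-fiberproducts} to $\hat\rho|_P$---would reintroduce the exponential factor $2^r$ in the commutator length and would only apply to $w\in[N,N]$ (or to multiple commutators of length $\geq d+1$), not to all of $N$. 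The five-term-sequence remark does not give $N\subseteq[\Gamma,\Gamma]\cdot[N,N]$ either; what $H_2(Q,\mathbb{Z})=0$ and $\hat Q=1$ are used for in the paper is solely to verify the Platonov--Tavgen hypotheses, not to manipulate $N$ algebraically. The correct observation you were missing is simply that $\hat\rho$ is a representation of the \emph{ambient} finitely generated group, and word length in $\Gamma\times\Gamma$, not in the distorted fiber product $P$, is what appears on the right-hand side of the desired inequality---so the trivial linear bound suffices.
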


\begin{proof} Let us set $P:=\Gamma  \times_N \Gamma$. Note that by Theorem \ref{PT-criterion} the inclusion $\iota: P \xhookrightarrow{} \Gamma \times \Gamma$ induces an isomorphism between profinite completions. By applying Grothendieck's theorem \cite[Thm. 1.2]{Grothendieck} we obtain a representation $\hat{\rho}:\Gamma \times \Gamma \rightarrow \mathsf{GL}_d(k)$ extending $\rho$, i.e. $\hat{\rho} \circ \iota=\rho$. Therefore, since $\Gamma \times \Gamma$ is finitely generated, there exist $C,c>0$ such that $$\Big|\Big|\mu  \big(\rho(\gamma,\gamma,w)\big)\Big|\Big|_{\mathbb{E}}=\Big|\Big|\mu\big(\hat{\rho}(\gamma,\gamma w)\big)\Big|\Big|_{\mathbb{E}} \leq C\Big(\big|\gamma \big|_{\Gamma}+\big|\gamma w\big|_{\Gamma}\Big)+c$$ for every $\gamma\in \Gamma$ and $w\in N$. The conclusion follows. \end{proof}

Theorem \ref{PTGr} immediately applies to the previous fiber product constructions in \cite{PT,Bass-Lubotzky, Bridson-Grunewald}. We also deduce the following corollary.

\begin{corollary} \label{PTGr2} Let $\Gamma$ be a Gromov hyperbolic group and $N$ be a normal subgroup of $\Gamma$. Suppose that $Q=\Gamma/N$ is a finitely presented group which is not Gromov hyperbolic, has no non-trivial finite quotients and $H_2(Q,\mathbb{Z})=0$. Then any representation of $P=\Gamma \times_N \Gamma$ over a local field fails to be a quasi-isometric embedding. Moreover, the group $P$ is not commensurable to any of the fiber produts in Theorem \ref{nonqie1} or Theorem \ref{nonqie2}. \end{corollary}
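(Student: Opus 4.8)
The statement splits into two independent claims, which I would treat separately.

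\textbf{No quasi-isometric embedding.} The plan is to combine Theorem \ref{PTGr} with the distortion of $P=\Gamma\times_N\Gamma$ inside $\Gamma\times\Gamma$. Equip $\Gamma\times\Gamma$ with the word metric of the union of finite generating sets of the two factors, so that $|(\gamma,\gamma w)|_{\Gamma\times\Gamma}\asymp|\gamma|_\Gamma+|\gamma w|_\Gamma$. For every representation $\rho:P\to\mathsf{GL}_d(k)$, Theorem \ref{PTGr} provides $C_0,c_0>0$ with $\|\mu(\rho(g))\|_{\mathbb{E}}\le C_0|g|_{\Gamma\times\Gamma}+c_0$ for all $g\in P$; that is, the Cartan projection of $\rho$ grows at most linearly in the \emph{ambient} word length. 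If $\rho$ were in addition a quasi-isometric embedding, the lower bound $C^{-1}|g|_P-C\le\|\mu(\rho(g))\|_{\mathbb{E}}$ would combine with this to give $|g|_P\le C_1|g|_{\Gamma\times\Gamma}+c_1$ for all $g\in P$, i.e. $P$ would be undistorted in $\Gamma\times\Gamma$. But by Proposition \ref{dist} the distortion of $P$ in $\Gamma\times\Gamma$ dominates the Dehn function of $Q=\Gamma/N$, and since $Q$ is finitely presented, infinite (a finite group with no non-trivial finite quotient is trivial, hence hyperbolic) and not Gromov hyperbolic, its Dehn function is strictly super-linear, because a finitely presented group with linear Dehn function is hyperbolic. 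This contradiction shows that no representation of $P$ into a general linear group over a local field is a quasi-isometric embedding.

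\textbf{Non-commensurability.} Here I would use the profinite completion, the dichotomy being visible already on the quotients. On one side, $Q$ is finitely generated with no non-trivial finite quotient, hence has no proper finite-index subgroup (the core of such a subgroup would be a finite quotient) and $H_1(Q,\mathbb{Z})=Q^{\mathrm{ab}}=0$ (a non-zero finitely generated abelian group has a non-trivial finite quotient); moreover, since $N\neq 1$ and $Q\neq 1$, $P$ is a full subdirect product of the hyperbolic group $\Gamma$ with itself, and by the Platonov--Tavgen criterion (Theorem \ref{PT-criterion}) the inclusion $P\hookrightarrow\Gamma\times\Gamma$ is a Grothendieck pair, $\hat P\cong\hat\Gamma\times\hat\Gamma$. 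On the other side, the fiber products $\Delta_{\mathcal A}=F_m\times_{\langle\langle\mathcal A\rangle\rangle}F_m$ of Theorem \ref{nonqie1} and $P_{\mathcal B}=\Gamma_{\mathcal B}\times_{N_{\mathcal B}}\Gamma_{\mathcal B}$ of Theorem \ref{nonqie2} surject, via the two coordinate projections modulo the normal subgroup $N'\times N'$, onto $Q_{\mathcal A}=\langle x_1,\dots,x_m\mid\mathcal A\rangle$, resp. $Q_{\mathcal B}=\langle x_1,\dots,x_m\mid\mathcal B\rangle$, whose abelianization is $\mathbb Z^m$ with $m\ge 2$ because $\mathcal A,\mathcal B\subset[F_m,F_m]$; in particular every finite-index subgroup of these quotients surjects onto $\mathbb Z^m$, and the fiber products themselves are residually finite, being subgroups of $F_m\times F_m$, resp. of $\Gamma_{\mathcal B}\times\Gamma_{\mathcal B}$ with $\Gamma_{\mathcal B}$ virtually special.

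Assuming $P$ were commensurable to such a $P'$, with common finite-index subgroup $L$, then $P$ is residually finite and $\hat L$ is open in both $\hat P\cong\hat\Gamma\times\hat\Gamma$ and $\hat{P'}$. The key step I would establish is that the normal subgroup $N\times N$ of $P$ --- the kernel of $P\to Q$, generated by the two coordinate-projection kernels $\{1\}\times N$ and $N\times\{1\}$ --- is, up to finite index, intrinsic to the abstract group $P$, and likewise for $P'$, invoking rigidity of full subdirect products of Gromov hyperbolic groups in the spirit of Bridson--Howie--Miller--Short. Granting this, $L$ would realise a finite-index subgroup of $Q$ as a group abstractly commensurable to a finite-index subgroup of $Q'\in\{Q_{\mathcal A},Q_{\mathcal B}\}$; since $Q$ has no proper finite-index subgroup and $b_1$ is unchanged under quotients by finite normal subgroups, this forces $0=b_1(Q)\ge m\ge 2$, the desired contradiction.

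\textbf{Main obstacle.} The delicate point is exactly this rigidity step: recovering, up to commensurability, the coordinate decomposition --- equivalently the normal subgroup $N\times N$ and the quotient $Q$ --- of a full subdirect product of hyperbolic groups. Short of a fully general statement, one can instead run the comparison at the level of profinite completions: commensurable residually finite groups have commensurable profinite completions, and $\hat{P'}$ has open subgroups with infinite abelianization (it surjects onto $\hat{Q'}$, which surjects onto $\hat{\mathbb Z}^m$), whereas when $\Gamma$ has virtually finite abelianization --- for instance Kazhdan's property (T), as for the cocompact lattices in $F_4^{(-20)}$ used by Bass--Lubotzky --- the profinite group $\hat\Gamma\times\hat\Gamma$ does not, and this is a commensurability invariant of profinite groups; thus the two families are separated, the only real question being how weak a hypothesis on $\Gamma$ suffices to make the separation unconditional.
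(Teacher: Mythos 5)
Your argument for the first assertion (no quasi-isometric embedding) is correct and is essentially the paper's own proof: apply Theorem~\ref{PTGr} to bound $\|\mu(\rho(\cdot))\|_{\mathbb E}$ linearly in $|\cdot|_{\Gamma\times\Gamma}$, observe that a quasi-isometric embedding of $P$ would then force $P$ to be undistorted in $\Gamma\times\Gamma$, and contradict this with Proposition~\ref{dist} together with the fact that a finitely presented group with linear Dehn function is hyperbolic, whereas $Q$ is not. Nothing to add there.

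The second assertion (non-commensurability) is where the proposal has a genuine gap. You correctly identify the crux --- one needs to know that the quotient $Q$ is, up to commensurability and finite kernels, an invariant of the abstract group $P=\Gamma\times_N\Gamma$, so that commensurability of $P$ with $\Delta_{\mathcal A}$ or $P_{\mathcal B}$ could be transferred down to a comparison of $Q$ with $Q_{\mathcal A}$ or $Q_{\mathcal B}$. But you leave this step unproved: you label it as ``the key step I would establish'' and appeal to ``rigidity \dots in the spirit of Bridson--Howie--Miller--Short'' without supplying an argument or a citable statement. The paper closes exactly this gap with Proposition~\ref{comm}, whose proof is short and elementary: for a commensuration $\varphi$ between two fiber products $\Gamma_1\times_{N_1}\Gamma_1$ and $\Gamma_2\times_{N_2}\Gamma_2$ of torsion-free non-elementary hyperbolic groups, one observes that $N_1\times\{1\}$ and $\{1\}\times N_1$ are non-abelian commuting subgroups, and since centralizers of non-trivial elements in a torsion-free hyperbolic group are cyclic, $\varphi$ must carry $N_1''\times N_1''$ into $N_2\times N_2$ (and conversely). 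From this one extracts the finite-extension/finite-index comparison of the two quotient groups. Once that proposition is in hand, the paper concludes by the same kind of numerical contradiction you propose (finite extensions of $Q$ have only finitely many finite-index subgroups because $\widehat{Q}=1$, while finite extensions of finite-index subgroups of $Q_{\mathcal A}$ or $Q_{\mathcal B}$ surject onto a non-trivial free abelian group). So the counting part of your argument is sound; what is missing is the actual rigidity lemma.

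Your fallback profinite argument does not rescue the general case. You show that $\hat{P}\cong\hat{\Gamma}\times\hat{\Gamma}$ via Platonov--Tavgen, and then try to separate $\hat{\Gamma}\times\hat{\Gamma}$ from $\hat{P'}$ by looking at open subgroups with infinite abelianization; but as you note, this only works under additional hypotheses on $\Gamma$ (e.g.\ property (T) or virtually finite abelianization), which are not part of the Corollary's hypotheses. In the Rips-construction examples, $\Gamma$ is a $C'(\tfrac16)$ small cancellation group and has no such constraint on its abelianization, so the profinite-completion comparison is inconclusive there. In short: the first half of your proof is complete and matches the paper, but the second half is only a sketch of the desired strategy and you would need to prove (or find a reference for) the centralizer-based rigidity of the quotient in a fiber product of torsion-free hyperbolic groups, which is precisely the content of the paper's Proposition~\ref{comm}.
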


\begin{proof}  Suppose that there exists a representation $\rho: P \rightarrow \mathsf{GL}_d(k)$ which is a quasi-isometric embedding. By using Theorem \ref{PTGr}, we deduce that the inclusion $\iota: P \xhookrightarrow{} \Gamma \times \Gamma$ is also a quasi-isometric embedding (i.e. the distortion of $P$ in $\Gamma \times \Gamma$ is linear). In particular, by Proposition \ref{dist}, the Dehn function of the quotient $Q=\Gamma/N$ is linear and hence $Q$ has to be Gromov hyperbolic (see \cite{Gromov}). This is a contradiction and hence such representation $\rho$ does not exist.\par \par Now observe that if $P_{0}:=\Gamma_0\times_{L_0} \Gamma_0$ is a fiber product from Theorem \ref{nonqie1} or Theorem \ref{nonqie2}, then any finite extension of a finite-index subgroup of $\Gamma_0/L_0$ surjects onto a non-trivial free abelian group. On the other hand, every finite extension of the group $\Gamma/N$ has finitely many subgroups of finite index. It follows by Proposition \ref{comm} that $P_0$ \hbox{cannot be commensurable to $P$.} \end{proof} In particular, the previous corollary shows that fiber product in Example \ref{PTpair} and the Bridson--Grunewald fiber products in \cite[\S 7, \S8]{Bridson-Grunewald} do not admit quasi-isometric embedding into any general linear group over a local field.

\begin{rmk} \normalfont{Let $\Lambda$ be a superrigid cocompact lattice in either $\mathsf{Sp}(d,1)$, $d \geq 2$, or the exceptional rank $1$ Lie group $\mathsf{F}_4^{(-20)}$. In contrast to Theorems \ref{nonqie1gen} \& \ref{nonqie2gen} and Corollary \ref{PTGr}, by using Corlette's Archimedean superrigidity \cite{Corlette}, we exhibited examples of Zariski dense infinite index subgroups $\mathsf{H}$ of $\Lambda \times \Lambda$ all of whose discrete faithful representations into any real semisimple Lie group are quasi-isometric embeddings. More precisely, the group $\mathsf{H}$ is constructed as the fiber product $\Lambda \times_N \Lambda$, where $\Lambda/N$ is Gromov hyperbolic (see \cite[Thm 1.1 \& Prop. 4.1]{Ts21}).}\end{rmk}

Let us remark that when $(\Gamma \times_N \Gamma, \Gamma \times \Gamma)$ fails to be a Grothendieck pair, then, linear representations of $\Gamma \times_N \Gamma$ might not extend to the product $\Gamma \times \Gamma$. We close this section with such an example where Theorem \ref{nonqie2} applies but Theorem \ref{PTGr} does not. 

\begin{Example}\label{non-extension} \normalfont{Let $F_2$ be the free group on $\{x_1,x_2\}$ and let $\gamma_2(F_2)=[F_2,F_2]$ and $\gamma_3(F_2)=[F_2,\gamma_2(F_2)]$. Observe that $\Delta_2=F_2\times_{\gamma_2(F_2)}F_2$ is a normal subgroup of $F_2 \times F_2$ and $\Delta_3:=F_2\times_{\gamma_3(F_2)}F_2$ is a normal subgroup of $\Delta_2$ with $\Delta_2/\Delta_3 \cong \gamma_2(F_2)/\gamma_3(F_2)\cong \mathbb{Z}$. Let $\pi:\Delta_2 \twoheadrightarrow \mathbb{Z}$ be the surjective homomorphism $$\pi(\delta, \delta w)=w \gamma_3(F_2),\ \delta \in F_2, w \in \gamma_2(F_2)$$ with kernel $\Delta_3$. Consider the representation $\pi_{\lambda}:\Delta_2 \rightarrow \mathsf{SL}_2(k)$ obtained by postcomposing $\pi$ with the homomorphism $\mathbb{Z}=\langle t \rangle \rightarrow \mathsf{SL}_2(k)$, $t \mapsto \textup{diag}(\lambda, \lambda^{-1})$, $\lambda \in k^{\ast}$. Let us set $w_{12}:=(1,[x_1,x_2])$ so that $\pi(w_{12})=t$ and fix a representation $\rho:\Delta_2 \rightarrow \mathsf{SL}_d(k)$. We claim that if $|\lambda|>\max \big\{\ell_1(\rho(w_{12})),\ell_1(\rho(w_{12}^{-1})\big\}$ then the product representation $$\pi_{\lambda}\times \rho\times \rho:\Delta_2 \rightarrow \mathsf{SL}\big(k^2\oplus k^{2d}\big), \ \gamma \mapsto \textup{diag}\big(
\pi_{\lambda}(\gamma), \rho(\gamma), \rho(\gamma)\big)$$ does not extend to a representation $\psi: F_2\times F_2 \rightarrow \mathsf{SL}(k^2\oplus k^{2d})$. \par Suppose that such an extension $\psi$ exists. Let us observe that $$\frac{\ell_1(\psi(\gamma))}{\ell_2(\psi(\gamma))}=\max\Big\{1, \frac{\ell_1(\pi_{\lambda}(\gamma))}{\ell_1(\rho(\gamma))}\Big\}$$ so if $\psi(\gamma)$ is $1$-proximal then $\ell_1(\pi_{\lambda}(\gamma))>\ell_1(\rho(\gamma))$ and its attracting fixed point is in $\mathbb{P}(k^2\oplus 0_{2d})$. By the choice of $|\lambda|>1$ it follows that the proximal limit set of the image of $\pi_{\lambda} \times \rho \times \rho$ in $\mathbb{P}(k^2\oplus k^{2d})$ contains two elements, namely the two lines $[e_1]$ and $[e_2]$ (where $\{e_1,e_2\}$ is the canonical basis of $k^2$). Since $\Delta_2$ is normal in $F_2\times F_2$, $\psi(F_2\times F_2)$ has to preserve the limit set of $\psi|_{\Delta_2}=\pi_{\lambda} \times \rho \times \rho$. In particular, there exists a normal finite index subgroup $L$ of $F_2 \times F_2$ such that $\psi(L)$ fixes the lines $ke_1$ and $ke_2$. Note that there exists $r\in \mathbb{N}$ such that $(1,x_1^r),(1,x_2^r)\in L$ and hence $\psi\big(1,[x_1^r,x_2^r]\big)$ acts trivially on $k^2=ke_1\oplus ke_2$. On the other hand, we have that $$\pi_{\lambda}\big((1,[x_1^r,x_2^r])\big)=\pi_{\lambda}\big((1,[x_1,x_2]^{r^2})\big)=\pi_{\lambda}(w_{12})^{r^2}$$ since $[x_1^r,x_2^r][x_1,x_2]^{-r^2}\in \gamma_3(F_m)$ and $\{1\}\times \gamma_3(F_2) \subset \textup{ker}\pi$. In particular, the restriction of $\psi(1,[x_1^r,x_2^r])=(\pi_{\lambda}\times \rho\times \rho)(1,[x_1^r,x_2^r])$ on the subspace $k^2 \oplus 0_{2d}$ is non-trivial. This is a contradiction, hence an extension $\psi:F_2\times F_2\rightarrow \mathsf{SL}(k^2\oplus k^{2d})$ of $\pi_{\lambda}\times \rho \times \rho$ cannot exist. $\qed$}\end{Example}

\begin{appendix} \section{Distortion of fiber products into direct products and commensurability}\label{Distortion}
\subsection{Distortion of fiber products into products} We review here a fact for the distortion of a fiber product into the ambient direct product.
 \par Suppose that $\Gamma$ is a finitely presented group, $S$ is a finite generating subset of $\Gamma$ and $\mathcal{F}$ is a finite subset of $\Gamma$. Let us set $P:=\Gamma \times_N \Gamma$, where $N=\langle \langle \mathcal{F} \rangle \rangle$, and fix the word length function $|\cdot|_{P}:P \rightarrow \mathbb{N}$ with respect to the finite generating subset $\big\{(g,g):g \in S\big\}\cup \big\{(1,w):w \in \mathcal{F} \big\}$ of $P$. The {\em distortion function} $\textup{Dist}_{P}^{\Gamma \times \Gamma}:\mathbb{N} \rightarrow \mathbb{N}$ of $P$ in $\Gamma \times \Gamma$ is defined as follows: $$\textup{Dist}_{P}^{\Gamma \times \Gamma}(q):=\max \Big\{ \big|(\gamma, \gamma w)\big|_{P}: \big|\gamma\big|_{\Gamma} +\big|\gamma w \big|_{\Gamma}\leq q \Big\}.$$ Observe that a different choice of left invariant word metric on $P$ gives a different distortion function equivalent to $\textup{Dist}_{P}^{\Gamma \times \Gamma}$.
\par Let $F_m$ be the free group on $\{a_1,\ldots,a_m\}$, $|\cdot|_{F_m}$ be the standard word metric on $F_m$ and let us fix a presentation \hbox{$\mathcal{P}_{\Gamma}:=\big \langle a_1,\ldots, a_m| \mathcal{R}\big \rangle$ of $\Gamma$.} If $w\in F_m$ is a word of the form $w=x_{i_1}^{\pm 1}\cdots x_{i_s}^{\pm 1}$, $x_{i_1},\ldots,x_{i_m}\in \{a_1,\ldots,a_m\}$, we declare its length as $\ell(w)=s$. For $w \in \langle \langle \mathcal{R} \rangle \rangle$ define its area $$\textup{Area}(w):=\min \Big\{r: w=\prod_{i=1}^{r} g_i R_i g_{i}^{-1}, g_1,\ldots,g_r\in F_m, R_1,\ldots,R_r\in \mathcal{R}^{\pm1}\Big\}.$$ The Dehn function $\delta_{\mathcal{P}_{\Gamma}}:\mathbb{N}\rightarrow \mathbb{N}$ of the presentation $\mathcal{P}_{\Gamma}$ of $\Gamma$ is defined as follows $$\delta_{\mathcal{P}_{\Gamma}}(q):=\max\Big\{\textup{Area}(w): w \in \langle \langle \mathcal{R} \rangle \rangle, \ell(w)\leq q\Big\}.$$ For two functions $f:\mathbb{N}\rightarrow \mathbb{R}^{+}$ and $g:\mathbb{N}\rightarrow \mathbb{R}^{+}$ we write $f \preceq g$ if there exist constants $c_1,c_2,c_3>0$ such that $g(n)\leq c_1f\big(c_2 n\big)+c_3n$ for every $n \in \mathbb{N}$. Given two isomorphic finite presentations $\mathcal{P}_1$ and $\mathcal{P}_2$, then $\delta_{\mathcal{P}_1} \preceq \delta_{\mathcal{P}_2}$ and $\delta_{\mathcal{P}_2} \preceq \delta_{\mathcal{P}_1}$. We refer the reader to \cite{Ger} and the references therein for more background on Dehn functions.
\par The following proposition provides a lower bound on the distortion of the fiber product $P$ in $\Gamma \times \Gamma$ in terms of the Dehn function of a presentation of the quotient group $\Gamma/N$. It is a consequence of a more general result in \cite[Prop. 3.2]{IT}. I would like to thank Claudio Llosa Isenrich for referring me to \cite[Prop. 3.2]{IT} which implies Proposition \ref{dist}. In the case where $\mathcal{R}$ is empty (i.e. $\Gamma$ is a free group) the following proposition follows by \cite[Thm. 2]{OS}. 

\begin{proposition}\label{dist} Let $\Gamma$ be a finitely presented group and $\mathcal{P}_{\Gamma}=\langle a_1,\ldots,a_m | \mathcal{R} \rangle$ be a finite presentation of $\Gamma$. Let also $\mathcal{F}$ be a finite subset of $\Gamma$, let $N=\langle\langle \mathcal{F}  \rangle\rangle$ and fix a presentation $\mathcal{P}_{\Gamma/N}$ of the quotient group $\Gamma/N$. Then for $q\in \mathbb{N}$ we have $$\delta_{\mathcal{P}_{\Gamma}}\big(\textup{Dist}_{P}^{\Gamma \times \Gamma}(q)\big)\succeq \delta_{\mathcal{P}_{\Gamma/N}}(q).$$ \end{proposition}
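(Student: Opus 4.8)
\emph{Proof strategy.} The plan is to reduce the inequality to a van Kampen diagram estimate for the presentation $\mathcal{P}_{\Gamma/N}=\langle a_1,\ldots,a_m\mid \mathcal{R}\cup\widehat{\mathcal F}\rangle$ of $\Gamma/N$, where $\widehat{\mathcal F}=\{\widehat f:f\in\mathcal F\}\subset F_m$ is a fixed finite family of words with $\widehat f$ representing $f\in\Gamma$; since Dehn functions of isomorphic finite presentations are $\preceq$-equivalent, working with this particular presentation is harmless, and we may likewise take $S=\{\bar a_1,\ldots,\bar a_m\}$. Write $D(q):=\textup{Dist}_{P}^{\Gamma\times\Gamma}(q)$, which is non-decreasing, and set $C:=1+\max_{f\in\mathcal F}\ell(\widehat f)$. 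Note that when $\Gamma$ is infinite one has $q\le 2D(q)+1$ (take $w=1$ and $\gamma$ with $|\gamma|_\Gamma=\lfloor q/2\rfloor$, and project $(\gamma,\gamma)$ to the first coordinate); the case $\Gamma$ finite is trivial. The goal is to show that there is a constant $\kappa\ge1$, depending only on the two presentations, such that every word $w\in F_m$ with $\ell(w)\le q$ and $w=_{\Gamma/N}1$ satisfies $\textup{Area}_{\mathcal P_{\Gamma/N}}(w)\le 2\,\delta_{\mathcal P_\Gamma}(\kappa D(q))+\kappa D(q)$; taking the maximum over all such $w$ then yields $\delta_{\mathcal P_{\Gamma/N}}(q)\le 2\,\delta_{\mathcal P_\Gamma}(\kappa D(q))+\kappa D(q)$, which (using $q\le 2D(q)+1$, so the $\kappa D(q)$ term is absorbed into a rescaling plus a linear term) is exactly the asserted relation $\delta_{\mathcal P_\Gamma}(\textup{Dist}_{P}^{\Gamma\times\Gamma}(q))\succeq\delta_{\mathcal P_{\Gamma/N}}(q)$.

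To build the van Kampen estimate I would use the geometry of the fiber product. Let $\bar w\in\Gamma$ be the image of $w$; since $w=_{\Gamma/N}1$ we have $\bar w\in N$, so $(1,\bar w)\in P$, and because $(1,\bar w)$ has $\Gamma\times\Gamma$-length $|\bar w|_\Gamma\le\ell(w)\le q$ we obtain $L:=\big|(1,\bar w)\big|_{P}\le D(q)$. Fix a geodesic expression $(1,\bar w)=t_1\cdots t_L$ with each $t_i$ a generator of $P$, that is, of diagonal type $(\bar a_k,\bar a_k)^{\pm1}$ or of type $(1,f)^{\pm1}$ with $f\in\mathcal F$. Reading off the first coordinates (the $(1,f)^{\pm1}$ contribute nothing) produces a word $u\in F_m$ with $\ell(u)\le L$ and $u=_\Gamma 1$, hence $u\in\langle\langle\mathcal R\rangle\rangle$ and $\textup{Area}_{\mathcal P_\Gamma}(u)\le\delta_{\mathcal P_\Gamma}(L)$. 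Reading off the second coordinates, recording a diagonal letter as the matching $\bar a_k^{\pm1}$ and a letter $(1,f)^{\pm1}$ as $\widehat f^{\pm1}$, produces a word $v\in F_m$ with $\ell(v)\le CL$ and $v=_\Gamma\bar w$. By construction $v$ is obtained from $u$ by inserting the words $\widehat f^{\pm1}$ at some $t\le L$ positions, so if $q_0,q_1,\ldots,q_t$ denote the intervening subwords of $u$ (so that $q_0q_1\cdots q_t=u$ as words in $F_m$) one has the free-group identity
\[
v \;=\; \Big(\prod_{j=1}^{t}\,(q_0\cdots q_{j-1})\,\widehat{f}_j^{\,\pm1}\,(q_0\cdots q_{j-1})^{-1}\Big)\,u .
\]
Substituting an area-realizing product for $u$ into this identity writes $v$ as a product of at most $L+\delta_{\mathcal P_\Gamma}(L)$ conjugates of elements of $(\mathcal R\cup\widehat{\mathcal F})^{\pm1}$, hence $\textup{Area}_{\mathcal P_{\Gamma/N}}(v)\le L+\delta_{\mathcal P_\Gamma}(L)$.

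It remains to pass from $v$ back to $w$. Since $w=_\Gamma\bar w=_\Gamma v$, the word $wv^{-1}$ lies in $\langle\langle\mathcal R\rangle\rangle$ and has length $\ell(wv^{-1})\le\ell(w)+\ell(v)\le q+CL\le (C+1)D(q)+q$, so $\textup{Area}_{\mathcal P_{\Gamma/N}}(wv^{-1})\le\textup{Area}_{\mathcal P_\Gamma}(wv^{-1})\le\delta_{\mathcal P_\Gamma}\big((C+1)D(q)+q\big)$. Combining,
\[
\textup{Area}_{\mathcal P_{\Gamma/N}}(w)\le\textup{Area}_{\mathcal P_{\Gamma/N}}(wv^{-1})+\textup{Area}_{\mathcal P_{\Gamma/N}}(v)\le\delta_{\mathcal P_\Gamma}\big((C+1)D(q)+q\big)+L+\delta_{\mathcal P_\Gamma}(L),
\]
and using $L\le D(q)$, $q\le 2D(q)+1$ and the monotonicity of $\delta_{\mathcal P_\Gamma}$, the right-hand side is bounded by $2\,\delta_{\mathcal P_\Gamma}(\kappa D(q))+\kappa D(q)$ for a suitable $\kappa$; taking the maximum over admissible $w$ completes the plan.

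The step I expect to be the main obstacle is the treatment of the relators $\mathcal R$ of $\Gamma$ — the genuinely new ingredient compared with the free-group case \cite{OS}: one must verify carefully that the first-coordinate projection $u$ of a $P$-geodesic word for $(1,\bar w)$ is null-homotopic in $\Gamma$ of length at most $L$ (so that its $\mathcal P_\Gamma$-area can be imported), and that $w$ and the second-coordinate projection $v$ differ by a short word that is null-homotopic over $\Gamma$, so that its area is controlled by $\delta_{\mathcal P_\Gamma}$ as well. The remaining work is the routine bookkeeping of the constant $C$ coming from the lengths of the chosen words $\widehat f$, together with the elementary check — via monotonicity of $\delta_{\mathcal P_\Gamma}$ and the bound $q\le 2D(q)+1$ — that this constant is absorbed by the equivalence relation $\preceq$.
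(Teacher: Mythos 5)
Your proof is correct and follows essentially the same route as the paper's proof. Both arguments pass to the presentation $\langle a_1,\ldots,a_m\mid\mathcal{R}\cup\widehat{\mathcal{F}}\rangle$ of $\Gamma/N$, use that $(1,\bar w)\in P$ has $P$-length at most $\textup{Dist}_{P}^{\Gamma\times\Gamma}(q)$, extract from a $P$-geodesic for $(1,\bar w)$ the first-coordinate word (your $u$, the paper's $w_1\cdots w_{d+1}$) and the second-coordinate word (your $v$, the paper's $w_1s_1\cdots w_ds_dw_{d+1}$), observe that $u$ and $v^{-1}w$ lie in $\langle\langle\mathcal{R}\rangle\rangle$ with controlled length, and combine the resulting $\mathcal{P}_\Gamma$-area bounds.
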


For the reader's convenience we provide a proof of the previous proposition.

\begin{proof} Let $\pi:F_m \twoheadrightarrow \Gamma$ be the projection of $F_m$ onto $\Gamma$ and $\mathcal{S}$ be a subset of $F_m$ such that $\pi(\mathcal{S})=\mathcal{F}$. We may assume that $\mathcal{P}_{\Gamma/N}=\langle a_1,\ldots,a_m | \mathcal{R} \cup \mathcal{S} \rangle$ since the Dehn functions of two presentations of the same group are equivalent. We want to obtain an upper bound for the Dehn function $\delta_{\mathcal{P}_{\Gamma/N}}$. Let $q\in \mathbb{N}$ and $w \in \langle \langle \mathcal{R} \cup \mathcal{S} \rangle \rangle$ be an element with $\ell(w)\leq q$. We may find $w_1,\ldots, w_{d+1} \in F_m$ and $s_1,\ldots, s_d \in \mathcal{S}^{\pm 1}$ such that $$(1,w)\big(\langle\langle \mathcal{R} \rangle \rangle \times \langle\langle \mathcal{R} \rangle \rangle\big)=(w_1,w_1)(1,s_1)\cdot \cdot \cdot (w_d,w_d)(1,s_d)(w_{d+1},w_{d+1})\big(\langle\langle \mathcal{R} \rangle \rangle \times \langle\langle \mathcal{R} \rangle \rangle\big),$$ $w_ 1\cdot\cdot \cdot w_{d+1}\in \langle \langle \mathcal{R} \rangle \rangle$ and $|(1,\pi(w))|_{P}=\sum_{i=1}^{d+1} \ell_i+d$, where each $w_i \in F_m$ is a product of $\ell_i$ elements in $ \{a_1^{\pm 1},\ldots,a_m^{\pm 1}\}$. Now observe that $W_d:=\big(w_1s_1\cdot \cdot \cdot w_d s_dw_{d+1}\big)^{-1}w \in \langle \langle \mathcal{R} \rangle \rangle$ and \begin{align*} \big|W_d\big|_{F_m}\leq \sum_{i=1}^{d}|s_i|_{F_m}+\sum_{i=1}^{d+1}\ell_i+q  & \leq \max_{s \in \mathcal{S}^{\pm 1}}|s|_{F_m} \Big(\big|(1, \pi(w)) \big|_{P} +q\Big)\\ & \leq \max_{s \in \mathcal{S}^{\pm 1}}|s|_{F_m}\Big(\textup{Dist}_{P}^{\Gamma \times \Gamma}(q) +q\Big).\end{align*} 

By the definition of the Dehn function $\delta_{\mathcal{P}_{\Gamma}}$, we may write $$W_d=z_1R_iz_1^{-1}\cdots z_{\ell}R_{\ell}z_{\ell}^{-1}$$ where \hbox{$\ell \leq \delta_{\mathcal{P}_{\Gamma}}\big(|W_d|_{F_m}\big)$} and $R_1,\ldots ,R_{\ell}\in  \mathcal{R}^{\pm1}$. Similarly, since the element $w_1\cdots w_{d+1}\in \langle \langle \mathcal{R} \rangle \rangle$ is a product of at most $\big|(1,\pi(w))\big|_{P}$ elements in $\{a_1^{\pm 1},\ldots, a_m^{\pm 1}\}$ and $\big|(1,\pi(w))\big|_{P}\leq \textup{Dist}_{P}^{\Gamma \times \Gamma}(q)$, we may write $$w_1\cdots w_{d+1}=v_1 R_1v_{1}^{-1}\cdots v_s R_s v_{s}^{-1}$$ for some $s\leq \delta_{\mathcal{P}_{\Gamma}}\big(\textup{Dist}_{P}^{\Gamma \times \Gamma}(q)\big)$. Therefore, we can write $$w=\prod_{i=1}^{d} (w_1\cdot\cdot \cdot w_i)s_i (w_1\cdot\cdot \cdot w_i)^{-1} \cdot \prod_{i=1}^{s} v_i R_iv_{i}^{-1}\cdot \prod_{j=1}^{\ell}z_jR_jz_j^{-1}$$ as product of at most $d+s+\ell \preceq  \delta_{\mathcal{P}_{\Gamma}}\big(\textup{Dist}_{P}^{\Gamma \times \Gamma}(q)\big)$ conjugates of elements in $S^{\pm 1}  \cup \mathcal{R}^{\pm 1}$. It follows that the Dehn function of $ \langle a_1,\ldots,a_m| \mathcal{R} \cup \mathcal{S} \rangle$ is dominated by $\delta_{\mathcal{P}_{\Gamma}}\big(\textup{Dist}_{P}^{\Gamma \times \Gamma}(q)\big)$.\end{proof}

\subsection{Commensurable fiber products} We also prove the following standard fact concerning commensurability of fiber products of hyperbolic groups.

\begin{proposition} \label{comm} Let $\Gamma_1$ and $\Gamma_2$ be two non-elementary torsion-free hyperbolic groups and $N_1,N_2$ two normal finite-index subgroups of $\Gamma_1,\Gamma_2$ respectively. Suppose the fiber products $\Gamma_1\times_{N_1}\Gamma_1$ and $\Gamma_2\times_{N_2}\Gamma_2$ are commensurable. There exist groups $G_1$, $G_2$ and $G_3$ with the following properties:\\
\noindent \textup{(i)} $G_1$ is a finite extension of a finite-index subgroup of $\Gamma_1/N_1$.\\
\noindent \textup{(ii)} $G_2$ is a finite extension of $G_1$.\\
\noindent \textup{(iii)} $G_3$ is a finite extension of $\Gamma_2/N_2$ and $G_2$ has finite index in $G_3$.
\end{proposition}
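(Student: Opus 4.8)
The plan is to exploit the fact that a torsion-free non-elementary hyperbolic group has no two nontrivial commuting normal subgroups, in order to recognize inside any finite-index subgroup of a fiber product $\Gamma\times_N\Gamma$ the two ``coordinate kernels''. We may assume $N_1$ and $N_2$ are nontrivial, the remaining cases being immediate. Write $P_i=\Gamma_i\times_{N_i}\Gamma_i$ and let $\pi^{(i)}_1,\pi^{(i)}_2\colon P_i\twoheadrightarrow\Gamma_i$ denote the two coordinate projections, so that $\ker\pi^{(i)}_2=A_i:=N_i\times\{1\}$ and $\ker\pi^{(i)}_1=B_i:=\{1\}\times N_i$; the subgroups $A_i,B_i$ are normal in $P_i$, commute, meet trivially, and are infinite, since a nontrivial normal subgroup of a non-elementary hyperbolic group is non-elementary. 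Fixing finite-index subgroups $H_i\leq P_i$ and an isomorphism $\phi\colon H_1\to H_2$ realizing the commensurability, put $\mathcal A_i:=H_i\cap A_i$ and $\mathcal B_i:=H_i\cap B_i$, which are nontrivial normal subgroups of $H_i$ of finite index in $A_i$ and $B_i$.

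First I would establish the rigidity statement: if $M,M'\trianglelefteq H_i$ are nontrivial with $[M,M']=\{1\}$, then, after possibly interchanging $M$ and $M'$, one has $M\leq\mathcal A_i$ and $M'\leq\mathcal B_i$. Here $\pi^{(i)}_1(M)$ and $\pi^{(i)}_1(M')$ are commuting normal subgroups of $\pi^{(i)}_1(H_i)$, a finite-index subgroup of $\Gamma_i$ and hence again torsion-free non-elementary hyperbolic; in such a group a nontrivial normal subgroup contained in the centralizer of a nontrivial element is infinite cyclic (the centralizer itself being infinite cyclic), and an infinite cyclic normal subgroup forces virtual cyclicity, so at most one of $\pi^{(i)}_1(M),\pi^{(i)}_1(M')$ is nontrivial; a short case analysis, running the same argument for $\pi^{(i)}_2$ and using $A_i\cap B_i=\{1\}$, gives the claim. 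Applying this to the commuting pair $\phi(\mathcal A_1),\phi(\mathcal B_1)\trianglelefteq H_2$, and noting that the swap of the two $\Gamma_2$-factors is an automorphism of $P_2$ interchanging $A_2$ and $B_2$ (so one may relabel), we obtain $\phi(\mathcal A_1)\leq\mathcal A_2$ and $\phi(\mathcal B_1)\leq\mathcal B_2$; running the argument for $\phi^{-1}$ and invoking $\mathcal A_i\cap\mathcal B_i=\{1\}$ upgrades these to equalities $\phi(\mathcal A_1)=\mathcal A_2$ and $\phi(\mathcal B_1)=\mathcal B_2$.

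Then I would produce the three groups. Set $\widehat H_2=\operatorname{core}_{P_2}(H_2)\trianglelefteq P_2$ and $\widehat{\mathcal A}_2=\widehat H_2\cap A_2$, $\widehat{\mathcal B}_2=\widehat H_2\cap B_2$; as $A_2,B_2\trianglelefteq P_2$ these are normal in $P_2$, and $\widehat{\mathcal A}_2$ is of finite index in $A_2$ because $\widehat H_2$ is of finite index in $P_2$. From the previous step $\widehat{\mathcal A}_2\leq\mathcal A_2=\phi(\mathcal A_1)$, so $\mathcal A_1':=\phi^{-1}(\widehat{\mathcal A}_2)\leq\mathcal A_1$ is of finite index in $N_1$, and similarly for $\mathcal B_1':=\phi^{-1}(\widehat{\mathcal B}_2)$. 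Now take
\[
G_1:=\text{the image of }H_1\text{ in }P_1/A_1B_1\cong\Gamma_1/N_1,\qquad G_2:=H_1/(\mathcal A_1'\mathcal B_1'),\qquad G_3:=P_2/(\widehat{\mathcal A}_2\widehat{\mathcal B}_2).
\]
Then $G_1$ is a finite-index subgroup of $\Gamma_1/N_1$; the quotient map $H_1\twoheadrightarrow G_1$ factors through $G_2$ with kernel $(H_1\cap A_1B_1)/(\mathcal A_1'\mathcal B_1')$, which is finite because $\mathcal A_1'\mathcal B_1'=\mathcal A_1'\times\mathcal B_1'$ has finite index in $N_1\times N_1=A_1B_1$, so $G_2$ is a finite extension of $G_1$; and $G_3\twoheadrightarrow P_2/A_2B_2\cong\Gamma_2/N_2$ has the finite kernel $(A_2B_2)/(\widehat{\mathcal A}_2\widehat{\mathcal B}_2)$, so $G_3$ is a finite extension of $\Gamma_2/N_2$. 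Finally $\phi$ descends to an isomorphism $G_2\cong H_2/(\widehat{\mathcal A}_2\widehat{\mathcal B}_2)$, and the latter embeds in $P_2/(\widehat{\mathcal A}_2\widehat{\mathcal B}_2)=G_3$ with index $[P_2:H_2]<\infty$; hence $G_2$ has finite index in $G_3$. This gives (i)--(iii).

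The single substantive ingredient is the rigidity statement of the second paragraph; the rest is bookkeeping, but two points need care. The inclusions $\phi(\mathcal A_1)\subseteq\mathcal A_2$ must be promoted to equalities — otherwise $\phi$ need not descend to the quotients defining $G_2$ — which is precisely what the $\phi^{-1}$-step does. And one must check that $\operatorname{core}_{P_2}(H_2)\cap A_2$ is of finite index in $A_2$: this uses the normality of $A_2$ in $P_2$ (so that the core of $\mathcal A_2=H_2\cap A_2$ equals $\widehat H_2\cap A_2$), since $\mathcal A_2$ itself has infinite index in $P_2$ and its bare core could a priori be small. The ``relabel the factors'' reduction should also be carried out honestly: the factor-swap is an automorphism of $P_2$, so one relabels $A_2\leftrightarrow B_2$ before pinning down $H_2$, rather than post-composing $\phi$ with an automorphism that need not fix $H_2$.
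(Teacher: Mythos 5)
Your proof is correct and takes essentially the same route as the paper: the key step in both is the rigidity observation that, because centralizers of non-trivial elements in a torsion-free non-elementary hyperbolic group are cyclic and cyclic normal subgroups force virtual cyclicity, the two coordinate kernels $N_i\times\{1\}$ and $\{1\}\times N_i$ are detected inside any finite-index subgroup of $\Gamma_i\times_{N_i}\Gamma_i$ as (up to swap) the unique pair of commuting non-trivial normal subgroups, so any commensuration matches them up; the three groups are then manufactured by quotienting by finite-index normal subgroups of $N_i\times N_i$. The only differences are bookkeeping: the paper first carves out a sub-fiber-product $\Gamma_1'\times_{N_1''}\Gamma_1'$ and uses a characteristic finite-index subgroup of $N_2\times N_2$, whereas you work directly with the intersections $\mathcal A_i,\mathcal B_i$ and pass to the normal core of $H_2$ in $P_2$; both serve the same purpose of producing a subgroup normal in the ambient fiber product.
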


\begin{proof} We may assume that there exists a normal finite-index subgroup $Q_1$ of $\Gamma_1\times_{N_1}\Gamma_1$ which is isomorphic to a finite-index subgroup of $\Gamma_2\times_{N_2}\Gamma_2$. There exist normal finite-index subgroups $\Gamma_1'\subset \Gamma_1$ and $N_1'\subset N_1$ such that $Q_1\cap \textup{diag}(\Gamma_1\times \Gamma_1)=\textup{diag}(\Gamma_1'\times \Gamma_1')$ and $Q\cap \big(\{1\}\times N_1\big)=\{1\}\times N_1'$. Note that since $\Gamma_1'\cap N_1$ has finite index in $N_1$, we may choose a characteristic finite-index subgroup $N_{1}''$ of $N$  contained in $\Gamma_1' \cap N_1$. In particular, $N_{1}''$ is normal in $\Gamma_1'$ and $\Gamma_{1}'\times_{N_{1}''}\Gamma_{1}'$ is a finite index subgroup of $Q_1$. We set $G_1:=\Gamma_1'/N_{1}''$ and observe that $G_1$ maps onto the finite-index subgroup $\Gamma_1'/\Gamma_{1}'\cap N_1=\Gamma_1'N_1/N_1$ of $\Gamma_1/N_1$ with finite kernel, so (i) is satisfied. 
\par By assumption, there exists a monomorphism $\varphi:\Gamma_1'\times_{N_1''}\Gamma_1'\xhookrightarrow{} \Gamma_2\times_{N_2}\Gamma_2$ with finite-index image, say $Q_2$. Observe that $\varphi( \{1\}\times N_1'')$ and $\varphi(N_1''\times \{1\})$ (resp. $\varphi^{-1}(Q_1\cap (\{1\}\times N_2))$ and $\varphi^{-1}(Q_1\cap (\{1\}\times N_2))$)  commute and are non-abelian. By using the fact that centralizers of non-trivial elements in $\Gamma_1$ and $\Gamma_2$ are cyclic, it follows that $\varphi(N_{1}''\times N_{1}'')\subset N_2\times N_2$ and $\varphi^{-1}(Q_2\cap (N_2\times N_2))\subset N_1''\times N_1''$. In particular, $\varphi(N_1''\times N_1'')$ has finite index in $N_2\times N_2$. We may choose a characteristic finite-index subgroup $R_2$ of $N_2\times N_2$ contained in $\varphi(N_1''\times N_1'')$. The group $R_2$ is normal in $\Gamma_2\times_{N_2}\Gamma_2$, hence we set $G_2:=Q_2/R_2$ and $G_3:=(\Gamma_2 \times_{N_2}\Gamma_2)/R_2$. Note that $G_2$ maps onto $Q_2/\varphi(N_{1}''\times N_{1}'')=\Gamma_1'/N_{1}''=G_1$ with finite kernel, so (ii) is satisfied. Moreover, $G_2$ has finite index in $G_3$ which in turn maps onto $\Gamma_2 \times_{N_2}\Gamma_2/N_2\times N_2=\Gamma_2/N_2$ with finite kernel.  \end{proof}
\end{appendix}

\end{document}